\documentclass[preprint,12pt]{elsarticle}

\usepackage[utf8]{inputenc}
\usepackage[T1]{fontenc}
\usepackage{lmodern}
\usepackage{microtype}
\usepackage{amsmath,amssymb,amsthm,mathtools}
\usepackage{enumitem}
\usepackage{booktabs}
\usepackage{tabularx,array}
\usepackage{xcolor}
\newif\ifshowchanges
\showchangesfalse
\ifshowchanges
  \newcommand{\rev}[1]{{\color{red}#1}}
  \newenvironment{revision}{\begingroup\color{red}}{\endgroup}
\else
  \newcommand{\rev}[1]{#1}
  \newenvironment{revision}{}{}
\fi
\ifshowchanges
  \newcommand{\newrev}[1]{{\color{red}#1}}
  \newcommand{\finalrev}[1]{{\color{red}#1}}
  \newenvironment{finalrevision}{\begingroup\color{red}}{\endgroup}
\else
  \newcommand{\newrev}[1]{#1}
  \newcommand{\finalrev}[1]{#1}
  \newenvironment{finalrevision}{}{}
\fi
\usepackage{hyperref}
\hypersetup{colorlinks=true,linkcolor=blue!55!black,citecolor=blue!55!black,urlcolor=blue!55!black}
\usepackage[nameinlink,noabbrev]{cleveref}

\newtheorem{theorem}{Theorem}[section]
\newtheorem{proposition}[theorem]{Proposition}
\newtheorem{corollary}[theorem]{Corollary}
\newtheorem{lemma}[theorem]{Lemma}
\theoremstyle{definition}
\newtheorem{definition}[theorem]{Definition}
\newtheorem{example}[theorem]{Example}
\theoremstyle{remark}
\newtheorem{remark}[theorem]{Remark}

\newcommand{\R}{\mathbb R}
\newcommand{\F}{\mathbb F}
\newcommand{\mug}{\mu_g}
\newcommand{\ACg}{AC_g}

\newcommand{\ind}{\mathbf 1}

\newcommand{\BV}{\operatorname{BV}}
\newcommand{\norm}[1]{\left\lVert#1\right\rVert}
\newcommand{\abs}[1]{\left\lvert#1\right\rvert}
\newcommand{\ip}[2]{\left\langle#1,#2\right\rangle}

\journal{Journal of Mathematical Analysis and Applications}

\begin{document}
\begin{frontmatter}

\title{Lateral Weak Stieltjes Derivatives: A Global Characterization of Stieltjes--Sobolev Spaces}

\author[usc]{Francisco Javier Fern\'andez Fern\'andez\corref{cor1}}
\ead{fjavier.fernandez@usc.es}
\cortext[cor1]{Corresponding author.}
\affiliation[usc]{organization={Department of Statistics, Mathematical Analysis and Optimisation, Universidade de Santiago de Compostela},
            addressline={Faculty of Mathematics, Campus Vida},
            city={Santiago de Compostela},
            postcode={15782},
            country={Spain}}

\begin{abstract}
Weak formulations for Stieltjes differential equations must retain the directional information carried by atoms of the driving measure. We derive a lateral integration-by-parts identity in which the posterior trace of the test function is forced by the atomic product rule. The resulting dual formulation gives an exact global characterization of integral Stieltjes--Sobolev spaces for the full exponent range from one to infinity and reconstructs the unique strong representative from its traced weak state. We prove that the associated traced weak operator is closed, establish Poincar\'e--Stieltjes estimates, and \rev{characterize Stieltjes absolute continuity inside bounded variation through a canonical derivative measure whose restriction to the open interval is the classical distributional derivative}. The framework is then applied to finite-dimensional nonlinear measure-driven systems, yielding equivalence of weak, integral, and measure formulations, existence and uniqueness under integrable Lipschitz assumptions, compactness-based existence, and intrinsic reset laws. Cubic and logistic models illustrate the simultaneous encoding of continuous evolution and instantaneous interventions.
\end{abstract}

\begin{keyword}
Stieltjes derivative \sep weak derivative \sep Stieltjes--Sobolev spaces \sep lateral integration by parts \sep bounded variation \sep measure differential equations
\end{keyword}

\end{frontmatter}

\section{Introduction}

\begin{revision}
Throughout the paper, the scalar field \(\F\) is either \(\R\) or
\(\mathbb C\).  The measure spaces and the notation \(L_g^p\) are fixed
explicitly in \cref{sec:prelim}.
\end{revision}

Stieltjes differential calculus provides a unified language for continuous evolution, discrete dynamics, inactive periods, and instantaneous impulses. Its geometry is determined by a left-continuous nondecreasing function, usually called a derivator, together with the associated Lebesgue--Stieltjes measure. This measure may simultaneously possess an absolutely continuous part, a singular continuous part, and a countable family of atoms. At non-atomic points, the Stieltjes derivative behaves as differentiation with respect to a measure. At an atom $t$, however, it records an oriented jump through
\[
 D_gU(t)=\frac{U(t^+)-U(t)}{\Delta g(t)}.
\]
This framework was developed systematically in \cite{LoRo14,FriLo17,LoMa18,MaMon20}. Product rules and their consequences for higher-order regularity were studied in \cite{FerMarToj25Product}, while the size of the kernel of the pointwise derivative outside $AC_g$ was analyzed in \cite{FerMarTojVil25Kernel}. The functional properties of Stieltjes--Sobolev spaces were established in \cite{FerToVil24}.

A weak formulation is needed for more than formal reasons. In the classical theory, the characterization of $W^{1,1}(a,b)$ by weak derivatives relies on the skew-symmetry of $d/dt$ on compactly supported test functions. This mechanism fails in the Stieltjes setting when atoms are present. If both $u$ and $\varphi$ jump at the same point, the increment of their product is not obtained by evaluating both differentiated terms at the same trace. One factor must be evaluated at the posterior trace. The correct formula is
\begin{equation}\label{eq:intro-ibp}
 [u\varphi]_a^b
 =\int_{[a,b)}uD_g\varphi\,d\mu_g
 +\int_{[a,b)}D_gu\,S_g\varphi\,d\mu_g,
\end{equation}
where $S_g\varphi(t)=\varphi(t^+)$ at atoms. Thus, the posterior operator $S_g$ is not an auxiliary convention appended after the derivative has been defined; it is forced by the algebra of jumps.

\begin{revision}
Several neighboring theories address related questions, but their objectives and
structures differ from the one considered here. Sobolev spaces on time scales
unify derivatives and differences on the geometry of a time scale
\cite{Agarwal2006}, and the Hilger derivative can be interpreted as a
Radon--Nikodym derivative with respect to the natural measure of the time scale
\cite{EckhardtTeschl2012}. Their underlying geometry is prescribed by a time
scale, rather than by a general derivator that may contain plateaus and a singular
continuous part. The corresponding Sobolev results are therefore
time-scale-specific, while the lateral data are encoded by the jump operators of
the scale.

Distributions acting on dynamic test functions enlarge the classical test space
so that products with discontinuous functions and internal impulse profiles become
meaningful \cite{DerrKinzebulatov2006,Kinzebulatov2007}. That theory is not a
Sobolev characterization attached to a general derivator: it deliberately retains
richer internal profiles at discontinuities. Our objective is narrower and
different. We retain exactly the anterior/posterior orientation forced by \(g\),
and we ask whether the resulting dual identity recovers, without adding spurious
states, the already defined integral Stieltjes--Sobolev space.

\begin{finalrevision}
The one-sided measure operators introduced by Simas and Sousa allow general
one-sided measures and two orientations and also include a first-order lateral
weak-derivative characterization within their Hilbertian framework
\cite{SimasSousa2025}. The comparison relevant here is different: the present
work uses one derivator, retains explicit global endpoint traces, and treats the
full exponent range \(1\le p\le\infty\). It also includes finite-dimensional
nonlinear dynamics, which is not the objective of the Simas--Sousa
construction.
\end{finalrevision}

Measure differential equations and inclusions already provide broad frameworks
for continuous, discrete, and impulsive systems, together with existence results
for regulated or bounded-variation solutions
\cite{CichonSatco2014,MonteiroSatco2019}. Continuous dependence under
perturbations of the driving measure is likewise an established line of research
\cite{MarraffaSatco2022}. \begin{finalrevision}
Those results can be substantially more general than the nonlinear examples
considered below. The result established here is the specific global traced
characterization of the integral space \(W_g^{1,p}\), and the nonlinear
applications use that characterization.
\end{finalrevision}
Finally, the spaces \(W_g^{1,p}\) used in this paper were previously defined by
an integral representation, and their completeness, embedding, and compactness
properties were obtained in \cite{FerToVil24}. The central theorem below must
therefore be read as a characterization of those spaces, not as a competing
definition.
\end{revision}

The novelty of the present paper is therefore not a new definition of those spaces, a new general existence theory for measure-driven equations, or an arbitrary enlargement of distribution theory. The new point is that the integration-by-parts structure determined by a general derivator yields an exact global weak characterization of the integral Stieltjes--Sobolev spaces. \finalrev{This characterization retains the posterior trace at atoms together with explicit initial and final traces.} It is valid for the full range $1\le p\le\infty$, reconstructs the unique strong representative, and identifies the weak derivative with the density in the Stieltjes integral representation.

The first contribution is the lateral product rule and integration-by-parts formula, including a direct treatment of the atomic diagonal. The proof shows exactly where the posterior trace enters and why it cannot be omitted. The second contribution is the global weak theory for traced states. We prove that the weak identity, the Stieltjes integral representation, and membership in $\mathcal W_g^{1,p}$ are equivalent for every $1\le p\le\infty$. The resulting traced weak operator is closed, its kernel consists precisely of constant states, and it satisfies Poincar\'e--Stieltjes inequalities and natural norm equivalences. The third contribution is the measure-distribution identity
\[
 DU=(D_gU)\mu_g,
\]
which characterizes $AC_g$ inside $BV$ and simultaneously covers the absolutely continuous, singular continuous, and atomic parts of the driving measure. The fourth contribution consists of nonlinear applications. For finite-dimensional Carath\'eodory systems, we prove the equivalence of the weak, integral, and classical measure formulations, derive global existence and uniqueness under an integrable Lipschitz condition, establish compactness-based existence under an invariant-ball hypothesis, and recover the intrinsic reset law at every atom. Cubic and logistic models make the continuous and impulsive components explicit.

The central result is \cref{thm:weak-strong-p}. For a traced state $([u],u_a,u_b)$ and a density $v\in L_g^p$, the identity
\[
 \int uD_g\Phi\,d\mu_g+\int vS_g\Phi\,d\mu_g
 =u_b\Phi(b)-u_a\Phi(a)
 \qquad\bigl(\Phi\in\mathcal W_g^{1,p'}\bigr)
\]
is equivalent to the existence of the unique representative
\[
 U(t)=u_a+\int_{[a,t)}v\,d\mu_g,
 \qquad U=u\ \mu_g\text{-a.e.},\qquad U(b)=u_b.
\]
The identity $DU=(D_gU)\mu_g$ in \cref{thm:general-classical-measure} places this weak characterization inside the classical theory of Radon measures and functions of bounded variation. The nonlinear results in \cref{thm:existence-general,thm:schauder-existence} then show that the weak framework is operational both in the uniqueness regime and in compactness-based existence arguments.

\begin{revision}
This comparison also fixes the precise scope of the claims.  We do not assert
that classical distribution theory is unable to describe the solutions.  Once a
lateral representative is prescribed, the equation is a rigorous equality of
measures.  The contribution is that the derivator, its reference measure, and the
lateral integration-by-parts identity determine that representative and its
atomic orientation before the weak equation is written.  Likewise, the article
does not claim a complete local theory of \(g\)-adapted distributions.  Local
test spaces, right-hand sides singular with respect to \(\mu_g\), iterated
derivatives in the presence of atoms, Banach-valued evolution equations, and
partial differential equations require additional structures and remain outside
the present paper.  The global construction in
\cref{sec:prospective-distributions} records the regular first-order
compatibility that any such extension must preserve.
\end{revision}

The paper is organized as follows. \Cref{sec:prelim} fixes the standard notion of absolute continuity with respect to $g$, recalls its integral characterization, and introduces traced Sobolev representatives. \Cref{sec:ibp} proves the product rule and the lateral integration-by-parts formula. \Cref{sec:weak} constructs the global weak derivative and establishes the first reconstruction theorem. \Cref{sec:operator} develops the natural $L^p$ duality, the closed traced operator, and the functional inequalities. \Cref{sec:nonlinear} treats finite-dimensional Carath\'eodory systems. The cubic and logistic applications are developed in \cref{sec:cubic,sec:logistic}. The general measure-distribution identity and the comparison with $BV$ and classical distributions are presented in \cref{sec:general-measure,sec:classical}. Finally, \cref{sec:prospective-distributions} recasts the main theorem as the regular global first-order sector that any future local theory of $g$-distributions must extend.

\section{Preliminaries on Stieltjes calculus}\label{sec:prelim}

\subsection{Derivator and associated measure}

Let $I=[a,b]$, with $a<b$, and let
\[
 g:I\longrightarrow\R
\]
be nondecreasing and left-continuous. We denote by $\mug$ the finite Lebesgue--Stieltjes measure characterized by
\begin{equation}\label{eq:measure-definition}
 \mug([s,t))=g(t)-g(s),\qquad a\le s\le t\le b.
\end{equation}
We write
\[
 M_g:=\mug([a,b))=g(b)-g(a).
\]
Except in one specific remark, we assume $M_g>0$. The case $M_g=0$ is degenerate: $g$ is constant, $\mug$ is the zero measure, and every integral representation is constant.

\begin{revision}
\begin{finalrevision}
Let \(\overline{\mathcal B}_g\) be the completion of
\(\mathcal B([a,b))\) with respect to \(\mu_g\), and let
\(\overline{\mu}_g\) denote the completed measure. For
\(1\le p\le\infty\), we set
\[
 L_g^p([a,b))
 :=L^p([a,b),\overline{\mathcal B}_g,\overline{\mu}_g;\F),
\]
with the usual identification modulo \(\mu_g\)-almost-everywhere equality.
Throughout the paper, integrals and almost-everywhere measurability assertions
use this completion; to avoid overloading the notation, we continue to write
\(d\mu_g\) in integrals. Canonical signed, complex, and vector measures are
still compared on the Borel \(\sigma\)-algebra. Every
\(\overline{\mu}_g\)-measurable \(L_g^p\) class has a Borel representative, so
this convention does not change the integral representatives or any value at a
positive-mass atom. Vector-valued versions are understood componentwise, or
equivalently as finite-dimensional Bochner spaces.
\end{finalrevision}
\end{revision}

The jump set is
\[
 D_g:=\{t\in[a,b):\Delta g(t):=g(t^+)-g(t)>0\}.
\]
Since $g$ is monotone, $D_g$ is at most countable and
\begin{equation}\label{eq:atom-mass}
 \mug(\{t\})=\Delta g(t),\qquad t\in D_g.
\end{equation}
The measure admits the decomposition
\begin{equation}\label{eq:measure-decomp}
 \mug=\mu_g^{\mathrm c}+\sum_{t\in D_g}\Delta g(t)\,\delta_t,
\end{equation}
\begin{revision}
where \(\mu_g^{\mathrm c}\) is non-atomic.  This is the canonical
atomic/non-atomic decomposition of a finite measure: the set of atoms is at most
countable, the atomic mass at \(t\) is \(\Delta g(t)\), and
\(\sum_{t\in D_g}\Delta g(t)\le M_g\), so the atomic series defines a
finite measure.  Subtracting that measure from \(\mu_g\) leaves the unique
non-atomic measure \(\mu_g^{\mathrm c}\); see, for example,
\cite{Bartle1995}.  All integrals over the whole interval are
understood over \([a,b)\) with the completed measure-space convention fixed
above; identities between canonical measures are identities on Borel sets.
\end{revision}

\subsection{Absolute continuity with respect to $g$ and integral representation}

We use the standard notion of absolute continuity relative to the derivator.

\begin{definition}[$g$-absolute continuity]\label{def:ACg-standard}
A function $u:[a,b]\to\F$ is $g$-absolutely continuous if, for every $\varepsilon>0$, there exists $\delta>0$ such that, for every finite family of pairwise disjoint half-open intervals $\{[a_j,b_j)\}_{j=1}^m\subset[a,b)$,
\[
 \sum_{j=1}^m\bigl(g(b_j)-g(a_j)\bigr)<\delta
 \quad\Longrightarrow\quad
 \sum_{j=1}^m\abs{u(b_j)-u(a_j)}<\varepsilon.
\]
The corresponding space is denoted by $AC_g([a,b])$.
\end{definition}

The following characterization is the fundamental theorem of Stieltjes calculus; see \cite[Theorem~5.4]{LoRo14}. We state it explicitly because it fixes unambiguously the representative and the strong derivative used throughout the paper.

\begin{theorem}[Fundamental theorem of Stieltjes calculus]\label{thm:FTC-standard}
For a function $u:[a,b]\to\F$, the following statements are equivalent:
\begin{enumerate}[label=\roman*)]
 \item $u\in AC_g([a,b])$;
 \item there exists $v\in L_g^1([a,b))$ such that
 \begin{equation}\label{eq:AC-rep}
 u(t)=u(a)+\int_{[a,t)}v(s)\,d\mu_g(s),\qquad t\in[a,b].
 \end{equation}
\end{enumerate}
In this case, $v$ is unique $\mu_g$-almost everywhere, agrees with the pointwise Stieltjes derivative $\mu_g$-almost everywhere, and is denoted by $D_gu$.
\end{theorem}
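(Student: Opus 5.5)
The plan is to prove the two implications separately, using $\mu_g$ as a genuine measure and $\mu_g([s,t))=g(t)-g(s)$ throughout.

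For ii) $\Rightarrow$ i), assume $u(t)=u(a)+\int_{[a,t)}v\,d\mu_g$ with $v\in L_g^1$. For a disjoint family $\{[a_j,b_j)\}$ we get
\[
\sum_j\abs{u(b_j)-u(a_j)}\le\int_{\bigcup_j[a_j,b_j)}\abs{v}\,d\mu_g ,
\qquad
\mu_g\Bigl(\bigcup_j[a_j,b_j)\Bigr)=\sum_j\bigl(g(b_j)-g(a_j)\bigr).
\]
The absolute continuity of the integral of $\abs{v}\in L^1(\mu_g)$ then supplies the required $\delta$ for each $\varepsilon$. This direction is routine.

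For i) $\Rightarrow$ ii), I would build a measure from $u$. Treating real and imaginary parts separately, set $\nu([s,t)):=u(t)-u(s)$ on half-open intervals. By the $\varepsilon$–$\delta$ condition with a single interval, $u$ is of bounded variation. The condition also forces $u$ to be left-continuous wherever $g$ is, and constant on intervals where $g$ is constant. Moreover, $u$ is left-continuous at every point, since $g(t)-g(s)\to0$ as $s\uparrow t$ by left-continuity of $g$. Hence the total variation function of $u$ is left-continuous and BV. This gives $u=u_1-u_2$ with $u_1,u_2$ nondecreasing and left-continuous, so $\nu=\mu_{u_1}-\mu_{u_2}$ is a finite signed Lebesgue–Stieltjes measure on $[a,b)$.

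The key step is $\nu\ll\mu_g$. Take a Borel set $E$ with $\mu_g(E)=0$ and fix $\varepsilon>0$ with its $\delta$. By outer regularity of Lebesgue–Stieltjes measures, applied to $\mu_g$ and to the variation $\abs{\nu}$ simultaneously, $E$ can be covered by a countable disjoint union of half-open intervals with $\mu_g$-mass less than $\delta$ and with $\abs{\nu}$-mass of the union close to $\abs{\nu}(E)$. Applying the AC condition to finite subfamilies gives $\abs{\nu}$ of that union at most about $\varepsilon$. For this I would use the variation version of the condition: refining intervals keeps the $g$-sum unchanged, so $\sum\abs{u(b_j)-u(a_j)}<\varepsilon$ upgrades to a bound on the total variation over the union. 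Letting $\varepsilon\to0$ gives $\abs{\nu}(E)=0$.

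I expect this approximation to be the main obstacle, especially at atoms of $g$ and at endpoints. There one must check that intervals $[s,t)$ shrinking to a point $\{t_0\}$ are compatible with the half-open convention, so that $\nu(\{t_0\})=u(t_0^+)-u(t_0)$. With $\nu\ll\mu_g$ established, Radon–Nikodym gives $v=d\nu/d\mu_g\in L_g^1$, and evaluating $\nu([a,t))$ yields \eqref{eq:AC-rep}. The value at $t=b$ is covered since $[a,b)$ carries everything.

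Uniqueness of $v$ follows because two densities define the same measure on all half-open intervals, hence on Borel sets by the $\pi$–$\lambda$ theorem. So they agree $\mu_g$-a.e.

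For the identification with the pointwise Stieltjes derivative, split into two cases. At an atom $t\in D_g$ we have
\[
\frac{u(t^+)-u(t)}{\Delta g(t)}=\frac{\nu(\{t\})}{\mu_g(\{t\})}=v(t).
\]
Off the atoms, I would invoke a differentiation theorem for Radon measures on $\R$ with respect to $\mu_g$, i.e. Besicovitch–Lebesgue differentiation. It gives $\nu(I)/\mu_g(I)\to v(t)$ along intervals shrinking to $t$ for $\mu_g$-a.e. $t$. Excluding the $\mu_g$-null set of points interior to constancy intervals of $g$ and their endpoints matches this with the definition of $D_gu$ in \cite{LoRo14}. At this point I would cite \cite[Theorem~5.4]{LoRo14} rather than redo the differentiation argument.
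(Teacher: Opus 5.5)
The paper does not prove this theorem. It quotes \cite[Theorem~5.4]{LoRo14}, and the only argument it adds is \cref{rem:density-unique}, which is exactly your uniqueness step via the semiring of half-open intervals. Your route is therefore genuinely different and more self-contained. You realize $u$ through the Lebesgue--Stieltjes measure $\nu$ of a left-continuous $BV$ function, prove $\nu\ll\mu_g$ directly from the $\varepsilon$--$\delta$ definition, and take the Radon--Nikodym density.

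The covering step is sound. You only need $\abs{\nu}(G)\le\varepsilon$ for every countable disjoint union $G\supset E$ of half-open intervals with $\mu_g(G)<\delta$; approximating $\abs{\nu}(E)$ at the same time is unnecessary. That bound follows from the finite-family condition applied to refinements, together with $\abs{\nu}([s,t))=\operatorname{Var}_{[s,t]}u$.

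This gives i)$\Leftrightarrow$ii), uniqueness, and the atomic formula \eqref{eq:Dg-atom} with no differentiation theory. That is all the rest of the paper uses, since $D_gu$ is always employed as the density. Only the identification of $v$ with the pointwise derivative at non-atomic points is left to \cite{LoRo14}, as in the paper. Your key step is also the implication $u\in AC_g\Rightarrow Du\ll\mu_g$ of \cref{cor:BV-characterization}, proved from scratch. With your argument, that corollary would no longer depend on the citation. In exchange, the citation delivers the full pointwise statement, which is the only part that needs a real differentiation theorem.

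Two steps need repair.

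First, bounded variation does \emph{not} follow from the single-interval condition. For $g(t)=t$, the function $t\sin(1/t)$ is uniformly continuous but not of bounded variation. You must use the finite-family condition, and large atoms need separate treatment, because no cell containing an atom with $\Delta g(d)\ge\delta$ has small $g$-mass. Take a finite partition that contains the finitely many atoms with $\Delta g(d)\ge\delta/2$ as nodes. Choose it so that each cell has $\mu_g$-mass below $\delta$ once a large atom at its left endpoint is discarded. On each cell the finite-family condition bounds the variation by $1$, except for the first increment after a large atom $d$. That increment is at most $1+\abs{u(r)-u(d)}$, where $r$ is the next node.

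Second, Besicovitch's theorem concerns centered balls, whereas the Stieltjes derivative uses one-sided increments over $[t,s)$ and $[s,t)$. The uncentered one-dimensional statement is true, for instance by transporting the non-atomic part of $\mu_g$ to Lebesgue measure through a pseudo-inverse of $g$. It is not, however, what Besicovitch provides, so deferring to \cite{LoRo14} there is the right call. Note also that endpoints of constancy intervals of $g$ may be atoms. Only the non-atomic ones may be discarded as a $\mu_g$-null set, which is automatic because you treat atoms first.
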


\begin{remark}[Uniqueness of the density]\label{rem:density-unique}
Assume that $v_1,v_2\in L_g^1$ yield the same representation. Then
\[
 \int_{[s,t)}(v_1-v_2)\,d\mu_g=0
 \qquad(a\le s\le t\le b).
\]
The signed or complex measure $(v_1-v_2)\mu_g$ therefore vanishes on the semiring of half-open intervals, which generates the Borel $\sigma$-algebra. Uniqueness of measure extension implies that this measure is zero. Consequently, $v_1=v_2$ $\mu_g$-almost everywhere.
\end{remark}

The integral representation immediately yields the estimates used throughout the paper. For every $t\in[a,b]$,
\begin{equation}\label{eq:bounded-AC}
 \abs{u(t)}
 \le \abs{u(a)}+\int_{[a,b)}\abs{D_gu}\,d\mu_g
 =\abs{u(a)}+\norm{D_gu}_{L_g^1}.
\end{equation}
Hence $u$ is bounded. If $t\in D_g$, subtracting the representation at $t$ from its right limit gives
\begin{align}
 u(t^+)-u(t)
 &=\int_{\{t\}}D_gu(s)\,d\mu_g(s)\notag\\
 &=D_gu(t)\Delta g(t),\label{eq:jump-strong}
\end{align}
and therefore
\begin{equation}\label{eq:Dg-atom}
 D_gu(t)=\frac{u(t^+)-u(t)}{\Delta g(t)}.
\end{equation}
At an atom, the value of $D_gu$ is not an irrelevant representative choice: it is determined by the $L_g^1$ class because $\mu_g(\{t\})>0$.

\begin{lemma}[Stieltjes primitives]\label{lem:FTC}
Let $c\in\F$ and $v\in L_g^1([a,b))$. Then
\[
 U(t):=c+\int_{[a,t)}v\,d\mu_g
\]
belongs to $AC_g([a,b])$, satisfies $U(a)=c$, and fulfills $D_gU=v$ $\mu_g$-almost everywhere. Conversely, every function in $AC_g([a,b])$ is obtained in this way with $c=U(a)$ and $v=D_gU$.
\end{lemma}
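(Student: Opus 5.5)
The lemma is essentially a repackaging of \cref{thm:FTC-standard}, so the plan is to read off both directions from that theorem, with \cref{rem:density-unique} handling the identification of the derivative.

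\emph{Direct part.} Fix \(c\in\F\) and \(v\in L_g^1([a,b))\), and define \(U\) as in the statement.
\begin{enumerate}[label=\arabic*)]
 \item Since \([a,a)=\emptyset\), the integral vanishes at \(t=a\), so \(U(a)=c\).
 \item Substituting \(c=U(a)\), \(U\) satisfies \eqref{eq:AC-rep} with density \(v\), which is statement ii) of \cref{thm:FTC-standard}.
 \item The implication ii)\(\Rightarrow\)i) gives \(U\in AC_g([a,b])\).
 \item The same theorem says the density in \eqref{eq:AC-rep} is unique \(\mu_g\)-a.e. and equals \(D_gU\). Hence \(D_gU=v\) \(\mu_g\)-a.e.
\end{enumerate}

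If one prefers not to lean on the last clause of \cref{thm:FTC-standard}, the identification can be done by hand. Take any density \(w\) representing \(U\). Subtracting the two representations gives \(\int_{[s,t)}(v-w)\,d\mu_g=0\) for all \(a\le s\le t\le b\). \Cref{rem:density-unique} then gives \(v=w\) \(\mu_g\)-a.e.

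\emph{Converse.} Let \(U\in AC_g([a,b])\). By i)\(\Rightarrow\)ii) of \cref{thm:FTC-standard}, there is \(v\in L_g^1\) with \(U(t)=U(a)+\int_{[a,t)}v\,d\mu_g\), and this \(v\) is \(D_gU\). So \(U\) has the stated form with \(c=U(a)\) and \(v=D_gU\).

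\emph{Main difficulty.} The only subtle point is the measurability convention. The paper uses the completed \(\sigma\)-algebra \(\overline{\mathcal B}_g\), while \cref{rem:density-unique} compares measures on Borel sets. I would handle this in two steps:
\begin{enumerate}[label=\roman*)]
 \item Replace \(v\) by a Borel representative, which the preliminaries guarantee exists. This changes neither the integrals nor the values of \(v\) at atoms.
 \item Apply the uniqueness-of-extension argument on the semiring of half-open intervals to this Borel representative.
\end{enumerate}
Beyond this, the proof needs no further computation.
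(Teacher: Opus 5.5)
Your proposal is correct and follows the paper's own proof: the paper also treats the lemma as a direct reformulation of \cref{thm:FTC-standard}, with \cref{rem:density-unique} supplying uniqueness of the density. Your extra step of passing to a Borel representative is a harmless refinement, consistent with the convention fixed in \cref{sec:prelim}.
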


\begin{proof}
This is a direct reformulation of \cref{thm:FTC-standard}. The uniqueness of the density was explained in \cref{rem:density-unique}.
\end{proof}

\subsection{Stieltjes--Sobolev spaces and traces}

To avoid confusing equivalence classes with point values, we distinguish the strong representative space
\begin{equation}\label{eq:Wgp-strong}
 \mathcal W_g^{1,p}([a,b])
 :=\left\{U\in AC_g([a,b]):
 U|_{[a,b)}\in L_g^p([a,b)),\ D_gU\in L_g^p([a,b))\right\},
\end{equation}
endowed with the graph norm
\begin{equation}\label{eq:Wgp-norm}
 \norm{U}_{\mathcal W_g^{1,p}}
 :=\norm{U}_{L_g^p}+\norm{D_gU}_{L_g^p}.
\end{equation}
By the fundamental theorem, this is the realization by $g$-absolutely continuous representatives of the integral definition of $W_g^{1,p}$ introduced in \cite{FerToVil24}. Namely, for every $x,y\in[a,b]$,
\[
 U(y)-U(x)=\int_x^yD_gU(s)\,d\mu_g(s).
\]

The class $[U]\in L_g^p$ need not determine endpoint values or all lateral information lying beyond the last point detected by $\mu_g$. We therefore formulate the weak theory in terms of \emph{traced states}
\[
 \mathbf u=([u],u_a,u_b)\in L_g^p([a,b))\times\F^2.
\]
Such a state is said to admit a strong representative if there exists $U\in\mathcal W_g^{1,p}$ such that
\[
 U=u\quad\mu_g\text{-a.e.},
 \qquad U(a)=u_a,
 \qquad U(b)=u_b.
\]
The derivative is attached to the strong representative, not to the bare equivalence class. The main theorem in \cref{sec:weak} will show that, once the two traces are fixed and the global weak identity holds, both the representative and its derivative are uniquely determined.

On a finite-measure interval, \eqref{eq:bounded-AC} shows that every strong representative is bounded. If the initial trace is fixed, the quantity
\[
 \abs{U(a)}+\norm{D_gU}_{L_g^p}
\]
controls the uniform norm and therefore the $L_g^p$ norm. Conversely, under the nondegeneracy assumption $M_g>0$, an averaging estimate controls the initial trace by the graph norm. This yields the usual norm equivalence in the traced space. For $p=1$, the strong representatives are exactly the functions in $AC_g([a,b])$.

\subsection{Posterior trace}

\begin{definition}\label{def:Sg}
For $u\in\ACg([a,b])$, define
\[
 S_gu(t):=
 \begin{cases}
 u(t^+),&t\in D_g,\\
 u(t),&t\notin D_g.
\end{cases}
\]
\end{definition}

\begin{revision}
The following proposition gathers the properties of \(S_g\) that are used later
and fixes the domain of the pointwise uniform norm.  We write
\[
 \|w\|_{\infty,[a,b]}:=\sup_{t\in[a,b]}|w(t)|.
\]

\begin{proposition}[Measurability and boundedness of the posterior operator]
\label{prop:Sg-properties}
The map \(S_g:AC_g([a,b])\to L_g^\infty([a,b))\) is linear.  More
precisely, for every \(u\in AC_g([a,b])\):
\begin{enumerate}[label=\roman*)]
 \item the right limit \(u(t^+)\) exists at every \(t\in D_g\), and
       \(S_gu\) is Borel measurable on \([a,b]\);
 \item at every atom,
 \begin{equation}\label{eq:Sg-formula}
  S_gu(t)=u(t)+\Delta g(t)D_gu(t)
          =u(a)+\int_{[a,t]}D_gu(s)\,d\mu_g(s);
 \end{equation}
 \item the pointwise estimate
 \begin{equation}\label{eq:Sg-bounded}
  \|S_gu\|_{\infty,[a,b]}
  \le |u(a)|+\|D_gu\|_{L_g^1([a,b))}
 \end{equation}
 holds.  Consequently, the same bound holds for the essential norm in
 \(L_g^\infty([a,b))\).
\end{enumerate}
\end{proposition}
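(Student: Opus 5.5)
The plan is to work entirely from the integral representation supplied by \cref{thm:FTC-standard} and \cref{lem:FTC}: every \(u\in AC_g([a,b])\) can be written as
\[
 u(t)=u(a)+\int_{[a,t)}D_gu\,d\mu_g,\qquad D_gu\in L_g^1([a,b)).
\]
Linearity of \(S_g\) is immediate from its definition, because right limits are linear wherever they exist. The substantive work is the existence of \(u(t^+)\), the formula \eqref{eq:Sg-formula}, measurability, and the bound.

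\textbf{Step 1 (right limits and \eqref{eq:Sg-formula}).} Fix \(t\in D_g\) and let \(s\downarrow t\). The sets \([a,s)\) decrease to \([a,t]\), and \(|D_gu|\) is integrable. Dominated convergence, or continuity from above of the finite complex measure \(D_gu\,\mu_g\), therefore gives
\[
 u(s)\to u(a)+\int_{[a,t]}D_gu\,d\mu_g.
\]
Splitting off the atom \(\{t\}\), which has mass \(\Delta g(t)\) by \eqref{eq:atom-mass}, yields \(u(t)+\Delta g(t)D_gu(t)\). This is consistent with \eqref{eq:jump-strong}. Note that \(t<b\) because \(D_g\subset[a,b)\), so right limits make sense. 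The same argument shows that \(u(t^+)\) exists at every \(t\in[a,b)\), which will be useful below.

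\textbf{Step 2 (measurability).} I would write
\[
 S_gu=u+\sum_{t\in D_g}\bigl(u(t^+)-u(t)\bigr)\ind_{\{t\}}.
\]
The function \(u\) is Borel, since it has bounded variation; alternatively, its real and imaginary parts split into differences of monotone primitives of the positive and negative parts of \(D_gu\). The correction term is supported on the countable set \(D_g\). Because singletons are Borel, the correction is a countable pointwise sum of Borel functions and converges everywhere, since at each point at most one term is nonzero. Hence \(S_gu\) is Borel. Alternatively, \(S_gu\) differs from \(u\) only on a countable Borel set, which settles measurability directly.

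\textbf{Step 3 (bound).} For \(t\notin D_g\), the estimate \eqref{eq:bounded-AC} applies directly. For \(t\in D_g\), the second expression in \eqref{eq:Sg-formula} gives
\[
 |S_gu(t)|\le|u(a)|+\int_{[a,t]}|D_gu|\,d\mu_g\le|u(a)|+\|D_gu\|_{L_g^1}.
\]
Taking the supremum gives \eqref{eq:Sg-bounded}. The essential bound follows because the essential supremum never exceeds the pointwise supremum, and the Borel function \(S_gu\) defines a class in \(L_g^\infty([a,b))\). This completes the mapping property.

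The only step that needs some care is Step 1, namely justifying the passage to the limit \(s\downarrow t\) and identifying the limit set as the closed interval \([a,t]\). Dominated convergence handles this cleanly. Everything else is bookkeeping.
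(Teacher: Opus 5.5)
Your proposal is correct and follows the paper's proof essentially step by step. Continuity from above of the finite measure $D_gu\,\mu_g$ along $[a,s)\downarrow[a,t]$ gives the right limits and \eqref{eq:Sg-formula}; Borel measurability comes from $S_gu$ differing from the Borel primitive $u$ only on the countable set $D_g$; and the bound uses \eqref{eq:bounded-AC} off the atoms and the closed-interval representation at atoms. The only difference is cosmetic: the paper treats $t=b$ separately, whereas you correctly absorb it into the case $t\notin D_g$, since $D_g\subset[a,b)$.
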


\begin{proof}
The integral representation in \cref{thm:FTC-standard} implies that \(u\) is
left-continuous.  If \(r\downarrow t\), continuity from above of the finite
measure \(|D_gu|\mu_g\) gives
\[
 \lim_{r\downarrow t}\int_{[a,r)}D_gu\,d\mu_g
 =\int_{[a,t]}D_gu\,d\mu_g,
\]
so the right limit exists at every \(t<b\).  At an atom \(t\), subtracting
the representations of \(u(t^+)\) and \(u(t)\) gives
\[
 u(t^+)-u(t)=\int_{\{t\}}D_gu\,d\mu_g
             =\Delta g(t)D_gu(t),
\]
which proves \eqref{eq:Sg-formula}.  The primitive \(u\) is Borel measurable;
the function \(S_gu\) differs from it only on the countable Borel set \(D_g\),
where the replacement values are explicitly defined.  Thus \(S_gu\) is Borel
measurable.

If \(t\notin D_g\), estimate \eqref{eq:bounded-AC} gives the required bound.
If \(t\in D_g\), formula \eqref{eq:Sg-formula} and the triangle inequality give
\[
 |S_gu(t)|
 \le |u(a)|+\int_{[a,t]}|D_gu|\,d\mu_g
 \le |u(a)|+\|D_gu\|_{L_g^1([a,b))}.
\]
At \(t=b\), the original integral representation yields the same inequality.
Taking the supremum over the closed interval \([a,b]\) proves
\eqref{eq:Sg-bounded}.  Linearity follows immediately from the definition of
right limits and from the linearity of the integral representation.
\end{proof}
\end{revision}

\section{Product rule and lateral integration by parts}\label{sec:ibp}

\subsection{An elementary increment identity}

For four scalars $u^-,u^+,\varphi^-,\varphi^+$, one has
\begin{align}
 u^+\varphi^+-u^-\varphi^-
 &=u^-(\varphi^+-\varphi^-)+(u^+-u^-)\varphi^+,
 \label{eq:finite-product-left}\\
 &=u^+(\varphi^+-\varphi^-)+(u^+-u^-)\varphi^-.
 \label{eq:finite-product-right}
\end{align}
These identities contain the entire asymmetry of the product rule at atoms.

\subsection{Differential measure of an absolutely continuous function}

If $u\in\ACg([a,b])$, define the finite signed or complex measure
\[
 \nu_u(E):=\int_E D_gu\,d\mug.
\]
By \eqref{eq:AC-rep},
\begin{equation}\label{eq:nu-u-interval}
 \nu_u([s,t))=u(t)-u(s).
\end{equation}
In particular, $\nu_u\ll\mug$ and $d\nu_u=D_gu\,d\mug$.

\begin{lemma}[Product of two $g$-absolutely continuous functions]\label{lem:product-measure}
Let $u,\varphi\in\ACg([a,b])$. Then there exists a unique finite signed or complex Borel measure $\nu$ such that, for every $a\le s\le t\le b$,
\[
 \nu([s,t))=u(t)\varphi(t)-u(s)\varphi(s),
\]
and this measure satisfies
\begin{equation}\label{eq:product-measure}
 d\nu=u\,d\nu_\varphi+(S_g\varphi)\,d\nu_u.
\end{equation}
Equivalently,
\begin{equation}\label{eq:product-measure-2}
 d\nu=(S_gu)\,d\nu_\varphi+\varphi\,d\nu_u.
\end{equation}
\end{lemma}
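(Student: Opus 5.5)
Uniqueness comes first. Two finite signed or complex Borel measures on \([a,b)\) that agree on every half-open interval \([s,t)\) have a difference that vanishes on the semiring of such intervals. This semiring generates the Borel \(\sigma\)-algebra, so uniqueness of extension (as in \cref{rem:density-unique}) makes the difference zero. Existence then reduces to checking that the right-hand side of \eqref{eq:product-measure} assigns the correct mass to intervals.

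For that we define
\[
\lambda:=u\,\nu_\varphi+(S_g\varphi)\,\nu_u .
\]
This is a finite measure because \(u\) and \(S_g\varphi\) are bounded Borel functions, by \eqref{eq:bounded-AC} and \cref{prop:Sg-properties}, while \(\nu_u\) and \(\nu_\varphi\) are finite. The goal is \(\lambda([s,t))=u(t)\varphi(t)-u(s)\varphi(s)\).

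The main step is a Fubini computation on the product measure \(\nu_u\otimes\nu_\varphi\) over the square \(Q=[s,t)^2\). Using \eqref{eq:AC-rep}, we write \(u(r)=u(s)+\nu_u([s,r))\) and \(S_g\varphi(r)=\varphi(s)+\nu_\varphi([s,r])\). The second formula holds at atoms by \eqref{eq:Sg-formula}. At non-atoms it holds because \(\nu_\varphi(\{r\})=0\) there: such points carry no \(\mu_g\)-mass, hence no \(\nu_\varphi\)-mass.

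Substituting these into \(\lambda([s,t))\) gives
\[
u(s)\nu_\varphi([s,t))+\varphi(s)\nu_u([s,t))+(\nu_u\otimes\nu_\varphi)(\{x<y\})+(\nu_u\otimes\nu_\varphi)(\{y\le x\}),
\]
where \(x\) is the \(\nu_u\)-variable, \(y\) the \(\nu_\varphi\)-variable, and both sets are taken inside \(Q\). The two triangles partition \(Q\), so the last two terms sum to \(\nu_u([s,t))\,\nu_\varphi([s,t))\). Fubini is legitimate because both measures are finite.

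Writing \(\Delta u=u(t)-u(s)\) and \(\Delta\varphi=\varphi(t)-\varphi(s)\), the total becomes
\[
u(s)\Delta\varphi+\varphi(s)\Delta u+\Delta u\,\Delta\varphi=u(t)\varphi(t)-u(s)\varphi(s).
\]
This identity is the continuous analogue of \eqref{eq:finite-product-left}. Together with uniqueness it proves \eqref{eq:product-measure}.

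The main obstacle is the diagonal. We must check that the split into \(\{x<y\}\) and \(\{y\le x\}\) matches the closed interval \([s,r]\) in \(S_g\varphi\) against the half-open \([s,r)\) in \(u\). Where both measures have atoms, the diagonal \(\{x=y\}\) carries mass \(\Delta g(r)^2D_gu(r)D_g\varphi(r)\), and it must be counted exactly once. It is assigned to the \(S_g\varphi\) term, which is precisely why the posterior trace appears. Off the atoms the diagonal is null, since \(\nu_u\) is non-atomic outside \(D_g\) and \(D_g\) is countable. I would verify this bookkeeping carefully rather than invoke a generic integration-by-parts theorem.

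For \eqref{eq:product-measure-2} we swap the roles of \(u\) and \(\varphi\) in the construction, obtaining \((S_gu)\,\nu_\varphi+\varphi\,\nu_u\). Uniqueness then identifies this measure with \(\nu\).
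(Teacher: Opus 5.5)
Your proposal is correct and follows essentially the same route as the paper. You write $u(r)$ through the half-open interval $[s,r)$ and $S_g\varphi(r)$ through the closed interval $[s,r]$, split the square into the two complementary triangles (the diagonal going to the posterior term), factor by Fubini, and conclude via uniqueness on the semiring of half-open intervals. The only cosmetic difference is that you phrase the double integral through the product measure $\nu_u\otimes\nu_\varphi$, whereas the paper integrates $f(q)h(r)$ against $\mu_g\otimes\mu_g$; these are the same object.
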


\begin{proof}
Write $f=D_gu$ and $h=D_g\varphi$. Since $u,\varphi,S_gu,S_g\varphi$ are bounded and $f,h\in L_g^1$, the right-hand sides of \eqref{eq:product-measure} and \eqref{eq:product-measure-2} define finite measures. We prove the first formula directly; the second follows by exchanging the roles of the two traces.

Fix $a\le s<t\le b$. For $r\in[s,t)$, the integral representation gives
\begin{align}
 u(r)&=u(s)+\int_{[s,r)}f(q)\,d\mug(q),\label{eq:u-local-rep}\\
 S_g\varphi(r)&=\varphi(s)+\int_{[s,r]}h(q)\,d\mug(q).
 \label{eq:Sphi-local-rep}
\end{align}
 \begin{revision}
 The distinction between the two integration domains is essential.  The value
 \(\varphi(r)\) contains precisely the mass accumulated on \([s,r)\).  If
 \(r\in D_g\), passing to the posterior value adds the atomic increment at
 \(r\):
 \[
  S_g\varphi(r)=\varphi(r^+)
  =\varphi(r)+\int_{\{r\}}h\,d\mu_g
  =\varphi(s)+\int_{[s,r]}h\,d\mu_g.
 \]
 If \(r\notin D_g\), then \(\mu_g(\{r\})=0\), so the integrals over
 \([s,r]\) and \([s,r)\) coincide.  Thus
 \eqref{eq:Sphi-local-rep} is valid in both cases and records exactly the
 diagonal atomic contribution.
 \end{revision}

Using \eqref{eq:u-local-rep}--\eqref{eq:Sphi-local-rep}, we obtain
\begin{align*}
 &\int_{[s,t)}u(r)h(r)\,d\mug(r)
 +\int_{[s,t)}f(r)S_g\varphi(r)\,d\mug(r)\\
 &=u(s)\int_{[s,t)}h\,d\mug
 +\varphi(s)\int_{[s,t)}f\,d\mug\\
 &\quad+\int_{[s,t)}\!\int_{[s,r)}f(q)h(r)\,d\mug(q)d\mug(r)\\
 &\quad+\int_{[s,t)}\!\int_{[s,r]}h(q)f(r)\,d\mug(q)d\mug(r).
\end{align*}
The double integrals are absolutely convergent because
\[
 \int_{[s,t)}\int_{[s,t)}|f(q)h(r)|\,d\mug(q)d\mug(r)
 \le \|f\|_{L_g^1([s,t))}\|h\|_{L_g^1([s,t))}.
\]
 In the second double integral, Fubini's theorem allows us to exchange the variable names. The domain $q\le r$ is then transformed into $r\le q$, while the integrand is again written as $f(q)h(r)$. Hence the two contributions are integrated over
\[
 \{(q,r):s\le q<r<t\},
 \qquad
 \{(q,r):s\le r\le q<t\}.
\]
 \begin{revision}
 The use of Fubini is legitimate because the product function is absolutely
 integrable on the product measure space:
 \[
  \int_{[s,t)^2}|f(q)h(r)|\,d(\mu_g\otimes\mu_g)(q,r)
  =\left(\int_{[s,t)}|f|\,d\mu_g\right)
   \left(\int_{[s,t)}|h|\,d\mu_g\right)<\infty.
 \]
 We therefore apply the Fubini theorem for finite product measure spaces
 \cite{Bartle1995}.  After the exchange of variables in the second
 term, its integration region is \(\{(q,r):r\le q\}\).  This region and
 \(\{(q,r):q<r\}\) are disjoint: equality belongs only to the former.  Their
 union is the whole square \([s,t)^2\), including its diagonal, whose product
 measure need not vanish when \(\mu_g\) has atoms.  Hence the sum of the double
 integrals is the integral of \(f(q)h(r)\) over the full square.  Applying
 Fubini once more factorizes that integral and gives
 \end{revision}
\[
 \left(\int_{[s,t)}f\,d\mug\right)
 \left(\int_{[s,t)}h\,d\mug\right).
\]
Set
\[
 A:=\int_{[s,t)}f\,d\mug=u(t)-u(s),
 \qquad
 B:=\int_{[s,t)}h\,d\mug=\varphi(t)-\varphi(s).
\]
The total increment becomes
\[
 u(s)B+\varphi(s)A+AB
 =(u(s)+A)(\varphi(s)+B)-u(s)\varphi(s),
\]
that is,
\[
 u(t)\varphi(t)-u(s)\varphi(s).
\]
 \begin{revision}
 Define first the finite signed or complex Borel measure
 \[
  \widetilde\nu(E)
  :=\int_E uD_g\varphi\,d\mu_g
    +\int_E(D_gu)S_g\varphi\,d\mu_g.
 \]
 It is finite because \(u,S_g\varphi\) are bounded and
 \(D_gu,D_g\varphi\in L_g^1\).  The preceding computation proves that
 \(\widetilde\nu([s,t))=u(t)\varphi(t)-u(s)\varphi(s)\) for every half-open
 interval.  This proves existence, with \(\nu:=\widetilde\nu\).

 For uniqueness, suppose that \(\nu_1\) and \(\nu_2\) are finite signed or
 complex Borel measures with the stated increments.  Their difference vanishes
 on the semiring of half-open intervals.  It also vanishes on \([a,b)\), and the
 uniqueness theorem for finite measures on a generating semiring, applied to the
 real and imaginary parts when necessary, gives \(\nu_1=\nu_2\) on the Borel
 \(\sigma\)-algebra; see \cite{Bartle1995}.

 Formula \eqref{eq:product-measure-2} follows by the same construction, using
 \end{revision}
\[
 S_gu(r)=u(s)+\int_{[s,r]}f\,d\mug,
 \qquad
 \varphi(r)=\varphi(s)+\int_{[s,r)}h\,d\mug.
\]
\begin{revision}
The two resulting measures have the same interval increments, so the uniqueness
just proved shows that both formulas describe the same \(\nu\).
\end{revision}
\end{proof}

\begin{corollary}[Stability under products and lateral product rule]\label{cor:product-rule}
If $u,\varphi\in\ACg([a,b])$, then $u\varphi\in\ACg([a,b])$ and
\begin{align}
 D_g(u\varphi)
 &=uD_g\varphi+(D_gu)S_g\varphi,
 \label{eq:product-lateral-1}\\
 &=(S_gu)D_g\varphi+(D_gu)\varphi
 \label{eq:product-lateral-2}
\end{align}
$\mug$-almost everywhere.
\end{corollary}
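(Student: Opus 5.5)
The plan is to read the corollary off \cref{lem:product-measure} and convert it, via the fundamental theorem \cref{thm:FTC-standard} (in the form of \cref{lem:FTC}), into a pointwise statement. The first step is to define the candidate density
\[
 w:=uD_g\varphi+(D_gu)S_g\varphi .
\]
We check that \(w\in L_g^1([a,b))\). Both \(u\) and \(S_g\varphi\) are Borel measurable. They are bounded by \eqref{eq:bounded-AC} and \eqref{eq:Sg-bounded}, respectively. Since \(D_gu,D_g\varphi\in L_g^1\), each product is integrable, with
\[
 \norm{w}_{L_g^1}\le \|u\|_{\infty,[a,b]}\norm{D_g\varphi}_{L_g^1}+\|S_g\varphi\|_{\infty,[a,b]}\norm{D_gu}_{L_g^1}.
\]

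The second step applies \cref{lem:product-measure} on the intervals \([a,t)\), that is, with \(s=a\) and arbitrary \(t\in[a,b]\). By \eqref{eq:product-measure}, the measure \(\nu=w\,\mu_g\) satisfies
\[
 u(t)\varphi(t)=u(a)\varphi(a)+\int_{[a,t)}w\,d\mu_g,\qquad t\in[a,b].
\]
The case \(t=a\) is trivial, since the integral is then empty. This is exactly representation \eqref{eq:AC-rep} for the product \(u\varphi\) with density \(w\). Hence \cref{thm:FTC-standard} gives \(u\varphi\in AC_g([a,b])\). Its uniqueness clause, or \cref{rem:density-unique}, gives \(D_g(u\varphi)=w\) \(\mu_g\)-almost everywhere, which is \eqref{eq:product-lateral-1}.

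For \eqref{eq:product-lateral-2}, the lemma shows that the second density \(w':=(S_gu)D_g\varphi+(D_gu)\varphi\), which is again in \(L_g^1\) by the same boundedness argument, defines the same measure \(\nu\) through \eqref{eq:product-measure-2}. So \(w'\) yields the same integral representation of \(u\varphi\), and uniqueness of the density forces \(w'=D_g(u\varphi)\) \(\mu_g\)-almost everywhere.

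There is no serious obstacle here. The only points needing care are the integrability of \(w\) and \(w'\), which rests on the pointwise bounds of \cref{prop:Sg-properties}, and the fact that at atoms the equality holds pointwise rather than merely almost everywhere. The latter is automatic, because atoms have positive mass. It is consistent with \eqref{eq:finite-product-left} and \eqref{eq:finite-product-right} divided by \(\Delta g(t)\), which we may note as a sanity check.
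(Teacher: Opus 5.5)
Your proposal is correct and follows essentially the same route as the paper: you read off the representation \(u(t)\varphi(t)=u(a)\varphi(a)+\int_{[a,t)}\bigl(uD_g\varphi+(D_gu)S_g\varphi\bigr)\,d\mu_g\) from \cref{lem:product-measure} on \([a,t)\). You then invoke \cref{thm:FTC-standard} and uniqueness of the density, whereas the paper phrases the same step by building the primitive \(P\) via \cref{lem:FTC} and identifying \(P=u\varphi\). The second formula is handled the same way in both, using that both densities define the same measure \(\nu\).
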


\begin{proof}
By \cref{lem:product-measure} and the identities $d\nu_u=D_gu\,d\mug$ and $d\nu_\varphi=D_g\varphi\,d\mug$, the measure $\nu$ is absolutely continuous with respect to $\mug$, with density given by either right-hand side. \newrev{To avoid using the phrase ``differential measure of the product'' before the absolute continuity of the product has been established, set
\[
 f:=uD_g\varphi+(D_gu)S_g\varphi
 \quad\text{and}\quad
 P(t):=u(a)\varphi(a)+\int_{[a,t)}f\,d\mu_g.
\]
The density $f$ belongs to $L_g^1$, so \cref{lem:FTC} gives
$P\in AC_g([a,b])$ and $D_gP=f$ almost everywhere. On the other hand,
\cref{lem:product-measure}, evaluated on $[a,t)$, yields
\[
 \int_{[a,t)}f\,d\mu_g
 =\nu([a,t))
 =u(t)\varphi(t)-u(a)\varphi(a).
\]
Consequently $P(t)=u(t)\varphi(t)$ for every $t\in[a,b]$. This proves
$u\varphi\in AC_g([a,b])$ and establishes
\eqref{eq:product-lateral-1} without any circularity. Repeating the same
argument with
$f=(S_gu)D_g\varphi+(D_gu)\varphi$, or using the equality of the two
densities furnished by \cref{lem:product-measure}, gives
\eqref{eq:product-lateral-2}.}

All terms are explicitly integrable. The functions $u$, $S_gu$, $\varphi$, and $S_g\varphi$ are bounded by \eqref{eq:bounded-AC}--\eqref{eq:Sg-bounded}, whereas $D_gu,D_g\varphi\in L_g^1$.
\end{proof}

\subsection{Integration by parts}

\begin{theorem}[Lateral integration by parts]\label{thm:ibp}
Let $u,\varphi\in\ACg([a,b])$. Then
\begin{equation}\label{eq:ibp-left}
 u(b)\varphi(b)-u(a)\varphi(a)
 =\int_{[a,b)}uD_g\varphi\,d\mug
 +\int_{[a,b)}D_gu\,S_g\varphi\,d\mug.
\end{equation}
Equivalently,
\begin{equation}\label{eq:ibp-right}
 u(b)\varphi(b)-u(a)\varphi(a)
 =\int_{[a,b)}S_gu\,D_g\varphi\,d\mug
 +\int_{[a,b)}D_gu\,\varphi\,d\mug.
\end{equation}
\end{theorem}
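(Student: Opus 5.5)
The plan is to obtain the identity directly from \cref{lem:product-measure}. The lemma already provides a unique finite Borel measure $\nu$ whose increments on half-open intervals are $\nu([s,t))=u(t)\varphi(t)-u(s)\varphi(s)$ for all $a\le s\le t\le b$. It also gives two density representations, \eqref{eq:product-measure} and \eqref{eq:product-measure-2}, with respect to $\nu_u$ and $\nu_\varphi$. The key step is to evaluate $\nu$ on the full interval $[a,b)$, that is, to take $s=a$ and $t=b$.

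On the left side this gives $u(b)\varphi(b)-u(a)\varphi(a)$. On the right side we use \eqref{eq:product-measure} together with $d\nu_u=D_gu\,d\mu_g$ and $d\nu_\varphi=D_g\varphi\,d\mu_g$. Then
\[
\nu([a,b))=\int_{[a,b)}u\,D_g\varphi\,d\mu_g+\int_{[a,b)}S_g\varphi\,D_gu\,d\mu_g,
\]
which is \eqref{eq:ibp-left}. Using \eqref{eq:product-measure-2} instead yields \eqref{eq:ibp-right} in the same way.

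Two points need to be checked so that these integrals make sense.
\begin{enumerate}[label=\roman*)]
 \item The integrands must be $\mu_g$-integrable. This holds because $u$ and $\varphi$ are bounded by \eqref{eq:bounded-AC}, $S_gu$ and $S_g\varphi$ are bounded Borel functions by \cref{prop:Sg-properties}, and $D_gu,D_g\varphi\in L_g^1$.
 \item The endpoint $b$ must be admissible. The lemma is stated for $t=b$, and the integral representation \eqref{eq:AC-rep} holds at $t=b$, so the value $u(b)\varphi(b)$ is the genuine point value, not a one-sided limit.
\end{enumerate}
As an alternative route, one could invoke \cref{cor:product-rule} and \cref{lem:FTC} applied to $u\varphi$ on $[a,b]$. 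Integrating $D_g(u\varphi)$ over $[a,b)$ gives the same result.

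Since the substantive work (the Fubini argument and the diagonal atomic bookkeeping) was done in \cref{lem:product-measure}, I do not expect a real obstacle. The only delicate point is consistency of the interval convention. The integrals must be taken over $[a,b)$, not $[a,b]$, so that a possible trace at $b$ beyond the support of $\mu_g$ is not double counted. The lemma's increments must be used exactly with $s=a$ and $t=b$.
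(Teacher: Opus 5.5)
Your proposal is correct and is essentially the paper's argument. The paper passes through \cref{cor:product-rule} to get $u\varphi\in AC_g$, then applies \cref{lem:FTC} on $[a,b)$ and substitutes \eqref{eq:product-lateral-1} or \eqref{eq:product-lateral-2}; this is exactly your alternative route, and your main route of evaluating the measure of \cref{lem:product-measure} directly on $[a,b)$ is equally valid, since the corollary is itself proved by that same evaluation on $[a,t)$.
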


\begin{proof}
By \cref{cor:product-rule}, $u\varphi\in\ACg$. Applying \cref{lem:FTC} between $a$ and $b$ gives
\[
 u(b)\varphi(b)-u(a)\varphi(a)
 =\int_{[a,b)}D_g(u\varphi)\,d\mug.
\]
Substitution of \eqref{eq:product-lateral-1} yields \eqref{eq:ibp-left}; substitution of \eqref{eq:product-lateral-2} yields \eqref{eq:ibp-right}.
\end{proof}

\begin{remark}[The posterior trace is necessary]\label{rem:need-shift}
At an atom of mass $\alpha$, write $u^\pm=u(t^\pm)$ and $\varphi^\pm=\varphi(t^\pm)$. The correct contribution is
\[
 u^-(\varphi^+-\varphi^-)+(u^+-u^-)\varphi^+
 =u^+\varphi^+-u^-\varphi^-.
\]
If $\varphi^+$ is replaced by $\varphi^-$ in the second term, the result differs by
\[
 (u^+-u^-)(\varphi^+-\varphi^-),
\]
which does not vanish in general.
\begin{revision}
To see the failure at the level of the Stieltjes integrals, let
\(\mu_g(\{t\})=\alpha>0\).  The atomic contribution of the correct formula is
\[
 \alpha u^-D_g\varphi(t)+\alpha D_gu(t)\varphi^+
 =u^-(\varphi^+-\varphi^-)+(u^+-u^-)\varphi^+,
\]
which is exactly the product increment.  If the anterior trace \(\varphi^-\)
were used in the second term, the atomic contribution would instead be
\[
 u^-(\varphi^+-\varphi^-)+(u^+-u^-)\varphi^-
 =u^+\varphi^+-u^-\varphi^-
  -(u^+-u^-)(\varphi^+-\varphi^-).
\]
The defect is therefore the product of the two simultaneous jumps.  For example,
if \(u^-=\varphi^-=0\) and \(u^+=\varphi^+=1\), the true increment equals
\(1\), whereas the formula using only anterior traces gives \(0\).  Thus the
posterior factor in \eqref{eq:ibp-left} is forced whenever both functions may
jump at the same atom.  The alternative identity \eqref{eq:ibp-right} is equally
valid because it places the posterior trace on the other factor; what is invalid
is to evaluate both differentiated terms at anterior traces.
\end{revision}
\end{remark}

\section{Global weak derivative with traces}\label{sec:weak}

\subsection{An explicit realization of the test space}

To avoid ambiguity between $L_g^\infty$ classes and representatives, we define
\begin{equation}\label{eq:test-space}
 \mathcal T_g:=\left\{\varphi:[a,b]\to\F:\ 
 \varphi(t)=c+\int_{[a,t)}\psi\,d\mug,
 \ c\in\F,\ \psi\in L_g^\infty([a,b))\right\}.
\end{equation}
By \cref{lem:FTC}, $\mathcal T_g$ is precisely the realization by integral representatives of $W_g^{1,\infty}([a,b))$. For $\varphi\in\mathcal T_g$,
\[
 D_g\varphi=\psi,
 \qquad
 \varphi(a)=c,
 \qquad
 \varphi(b)=c+\int_{[a,b)}\psi\,d\mug.
\]
\begin{revision}
Moreover, \(S_g\varphi\) is bounded by
\cref{prop:Sg-properties}. Hence, if \(u,v\in L_g^1\), both integrals in
the following definition are finite.  We place the definition here, immediately
after constructing the test space, so that the later discussion refers to an
already specified weak identity.

\begin{definition}[Weak derivative with traces]\label{def:weak-derivative}
Let \(u\in L_g^1([a,b))\) and \(u_a,u_b\in\F\). A function
\(v\in L_g^1([a,b))\) is called the weak \(g\)-derivative of the traced state
\[
 \mathbf u=([u],u_a,u_b)
\]
if
\begin{equation}\label{eq:weak-global}
 \int_{[a,b)}uD_g\varphi\,d\mug
 +\int_{[a,b)}vS_g\varphi\,d\mug
 =u_b\varphi(b)-u_a\varphi(a)
\end{equation}
for every \(\varphi\in\mathcal T_g\).
\end{definition}
\end{revision}

\subsection{Why this test class is natural}

The choice of $\mathcal T_g$ is dictated by three requirements. First, $D_g\varphi$ must be bounded so that it can be paired with every $u\in L_g^1$. Second, the posterior trace $S_g\varphi$ must be defined and bounded so that it can be paired with a candidate derivative. Third, the range of $D_g$ must contain all bounded densities needed in the separation argument. Stieltjes primitives of $L_g^\infty$ functions satisfy these three requirements simultaneously. When $g(t)=t$, the space $\mathcal T_g$ is the traced realization of $W^{1,\infty}(a,b)$; the classical formulation based on $C_c^\infty$ is recovered by restricting the test functions.

Requiring both endpoint traces of a test function to vanish is not sufficient. The next elementary model shows the resulting loss of information.

\begin{example}[A single atom and the defect of zero-boundary tests]\label{ex:one-atom-tests}
Assume that $\mu_g=\alpha\delta_{t_0}$, where $\alpha>0$. As far as the measure is concerned, every $\varphi\in\mathcal T_g$ is determined by the two values $\varphi(t_0)$ and $\varphi(t_0^+)$, and
\[
 D_g\varphi(t_0)=\frac{\varphi(t_0^+)-\varphi(t_0)}{\alpha}.
\]
If, in addition, $\varphi(a)=\varphi(b)=0$, the integral representation forces the total increment to be zero. In the one-atom model this implies $D_g\varphi(t_0)=0$. Therefore, an identity tested only against $\mathcal T_{g,0}$ cannot detect the value of a derivative concentrated at that atom. The boundary terms in the global definition are not decorative; they carry the information needed to determine both the reconstruction constant and the atomic contribution.
\end{example}

\subsection{Two separation lemmas}

\begin{lemma}[Range under two zero-trace conditions]\label{lem:range-zero}
Let
\[
 \mathcal T_{g,0}:=\{\varphi\in\mathcal T_g:\varphi(a)=\varphi(b)=0\}.
\]
Then
\begin{equation}\label{eq:range-zero}
 D_g(\mathcal T_{g,0})
 =\left\{\psi\in L_g^\infty([a,b)):\int_{[a,b)}\psi\,d\mug=0\right\}.
\end{equation}
\end{lemma}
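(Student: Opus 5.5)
The proof is a double inclusion, and both directions follow directly from the integral representation in the definition \eqref{eq:test-space} of \(\mathcal T_g\) together with \cref{lem:FTC}.

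For the inclusion ``\(\subseteq\)'', take \(\varphi\in\mathcal T_{g,0}\). Write it as \(\varphi(t)=c+\int_{[a,t)}\psi\,d\mu_g\) with \(\psi\in L_g^\infty([a,b))\). By \cref{lem:FTC}, \(D_g\varphi=\psi\) \(\mu_g\)-a.e., so \(D_g\varphi\in L_g^\infty\). Evaluating the representation at \(a\) gives \(c=\varphi(a)=0\). Evaluating it at \(b\) then gives
\[
 0=\varphi(b)=\int_{[a,b)}\psi\,d\mu_g,
\]
so \(D_g\varphi\) lies in the right-hand side of \eqref{eq:range-zero}. The uniqueness of the density in \cref{rem:density-unique} guarantees that this conclusion does not depend on the chosen representative of the class \(D_g\varphi\).

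For the inclusion ``\(\supseteq\)'', take \(\psi\in L_g^\infty([a,b))\) with \(\int_{[a,b)}\psi\,d\mu_g=0\), and define
\[
 \varphi(t):=\int_{[a,t)}\psi\,d\mu_g,\qquad t\in[a,b].
\]
Since \(\mu_g\) is finite, \(\psi\in L_g^1\) and the integral is well defined. By construction \(\varphi\in\mathcal T_g\) with \(c=0\), so \(\varphi(a)=0\). The zero-mean hypothesis gives \(\varphi(b)=0\), hence \(\varphi\in\mathcal T_{g,0}\). Finally, \cref{lem:FTC} yields \(D_g\varphi=\psi\) \(\mu_g\)-a.e., which shows \(\psi\in D_g(\mathcal T_{g,0})\).

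There is no serious obstacle in this argument. The only point needing care is the endpoint convention: the value \(\varphi(b)\) is given by the integral over \([a,b)\), so it contains no contribution from \(b\) itself. This matches the fact that every integral in the paper is taken over \([a,b)\).
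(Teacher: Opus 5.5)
Your proof is correct and matches the paper's argument: the inclusion $\subseteq$ comes from evaluating the integral representation (equivalently, the fundamental theorem) at $a$ and $b$, and the inclusion $\supseteq$ from the primitive $\varphi(t)=\int_{[a,t)}\psi\,d\mu_g$. The only difference is that you state explicitly that $D_g\varphi\in L_g^\infty$, which the paper leaves implicit.
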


\begin{proof}
If $\varphi\in\mathcal T_{g,0}$, the fundamental theorem gives
\[
 \int D_g\varphi\,d\mug=\varphi(b)-\varphi(a)=0.
\]
Conversely, let $\psi\in L_g^\infty$ have zero integral and define
\[
 \varphi(t):=\int_{[a,t)}\psi\,d\mug.
\]
Then $\varphi\in\mathcal T_g$, $D_g\varphi=\psi$, $\varphi(a)=0$, and \[\varphi(b)=\int_{[a,b)}\psi\,d\mug=0\].
\end{proof}

\begin{lemma}[Annihilator of the zero-mean functions]\label{lem:dubois}
Let $w\in L_g^1([a,b))$. If
\[
 \int_{[a,b)} w\psi\,d\mug=0
\]
for every $\psi\in L_g^\infty$ satisfying \[\int_{[a,b)}\psi\,d\mug=0,\] then there exists $c\in\F$ such that $w=c$ $\mug$-almost everywhere.
\end{lemma}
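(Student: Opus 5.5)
The plan is a du Bois-Reymond argument: subtract the mean of \(w\) and test against a bounded zero-mean function built from the result. Since \(M_g>0\), we may set
\[
 c:=\frac{1}{M_g}\int_{[a,b)}w\,d\mug,
 \qquad
 \tilde w:=w-c\in L_g^1([a,b)).
\]
Then \(\int_{[a,b)}\tilde w\,d\mug=0\). Moreover, for every admissible \(\psi\),
\[
 \int_{[a,b)}\tilde w\,\psi\,d\mug=\int_{[a,b)} w\psi\,d\mug-c\int_{[a,b)}\psi\,d\mug=0 .
\]
So the hypothesis is inherited by \(\tilde w\), and it suffices to show \(\tilde w=0\) \(\mug\)-almost everywhere.

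The key step is the choice of test function. Since \(\tilde w\) is only integrable, \(\overline{\tilde w}\) itself may not lie in \(L_g^\infty\). I will instead use the bounded phase function
\[
 \sigma:=\frac{\overline{\tilde w}}{\abs{\tilde w}}\,\ind_{\{\tilde w\neq0\}}
 \in L_g^\infty([a,b)),
 \qquad \abs{\sigma}\le 1 .
\]
This is measurable with respect to the completed \(\sigma\)-algebra, which is the convention fixed in \cref{sec:prelim}. Put \(\psi:=\sigma-m\) with \(m:=M_g^{-1}\int_{[a,b)}\sigma\,d\mug\). Then \(\psi\in L_g^\infty\) has zero mean, so it is admissible. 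Using \(\int\tilde w\,d\mug=0\), we get
\[
 0=\int_{[a,b)}\tilde w\,\psi\,d\mug
 =\int_{[a,b)}\tilde w\,\sigma\,d\mug - m\int_{[a,b)}\tilde w\,d\mug
 =\int_{[a,b)}\abs{\tilde w}\,d\mug .
\]
Hence \(\tilde w=0\) almost everywhere, that is, \(w=c\) \(\mug\)-almost everywhere.

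The only delicate point is making the argument work for \(w\) that is merely \(L^1\), with complex scalars allowed. That is exactly why I use the sign/phase function \(\sigma\) rather than a truncation of \(\overline{\tilde w}\) itself. A second point is the role of the standing assumption \(M_g>0\): it is what makes the mean \(c\) well defined, and it should be mentioned explicitly. The atomic part of \(\mug\) needs no special treatment, since the argument uses only integration against \(\mug\) and never pointwise values.

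Equivalently, one can phrase this as a duality statement. The zero-mean subspace of \(L_g^\infty\) is the kernel of the functional \(\psi\mapsto\int\psi\,d\mug\), whose preannihilator in \(L_g^1\) is spanned by the constants. The direct argument above avoids invoking that duality abstractly.
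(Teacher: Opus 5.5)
Your proof is correct. It shares the paper's skeleton: the same constant $c=M_g^{-1}\int_{[a,b)}w\,d\mu_g$, the same use of $M_g>0$, and the same device of subtracting the mean of a bounded function to make it an admissible test. Where you differ is in the choice of tests.

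The paper applies the hypothesis to the whole family $\psi_E=\ind_E-\mu_g(E)/M_g$, one for each measurable $E$. This gives $\int_E(w-c)\,d\mu_g=0$ for every $E$, and the paper concludes from the vanishing of the measure $(w-c)\mu_g$ via uniqueness of the Radon--Nikodym density. You instead first check that $\tilde w=w-c$ inherits the hypothesis and has zero mean. You then use a single test, the centered phase function $\sigma-m$, which gives $\int_{[a,b)}\abs{\tilde w}\,d\mu_g=0$ directly.

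Your route is slightly more self-contained and handles complex scalars in one step. It needs no appeal to Radon--Nikodym uniqueness, whose standard proof tests against sets such as $\{\operatorname{Re}\tilde w>0\}$ and so is essentially your phase argument split into pieces. The paper's route uses only indicator tests and records the stronger intermediate fact that the whole measure $(w-c)\mu_g$ vanishes.
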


\begin{proof}
Define
\[
 c:=\frac1{M_g}\int_{[a,b)}w\,d\mug.
\]
\begin{revision}
Here \(M_g=\mu_g([a,b))=g(b)-g(a)>0\), as defined in
\eqref{eq:measure-definition} and the line following it; hence the quotient is
well defined.
\end{revision}
For every measurable set $E\subseteq [a,b)$, the function
\[
 \psi_E:=\ind_E-\frac{\mug(E)}{M_g}
\]
belongs to $L_g^\infty$ and has zero mean. The hypothesis gives
\[
 0=\int_{[a,b)} w\psi_E\,d\mug
 =\int_Ew\,d\mug-c\mug(E)
 =\int_E(w-c)\,d\mug.
\]
\begin{revision}
Since this holds for every measurable \(E\subseteq[a,b)\), the signed or complex
measure
\[
 \nu(E):=\int_E(w-c)\,d\mu_g
\]
vanishes identically.  The Radon--Nikodym density of the zero measure relative to
\(\mu_g\) is zero almost everywhere, by uniqueness in the Radon--Nikodym
theorem; see \cite{Bartle1995}.  Therefore \(w-c=0\)
\(\mu_g\)-almost everywhere, which is the desired conclusion.
\end{revision}
\end{proof}

\begin{revision}
\begin{lemma}[Injectivity of a fully traced representative]
\label{lem:traced-representative-injective}
Let \(W\in AC_g([a,b])\).  If
\[
 W=0\quad\mu_g\text{-almost everywhere},
 \qquad W(a)=W(b)=0,
\]
then \(W(t)=0\) for every \(t\in[a,b]\) and \(D_gW=0\)
\(\mu_g\)-almost everywhere.
\end{lemma}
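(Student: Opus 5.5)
Our plan is to pass from the almost-everywhere information to a statement about the density, and then return to pointwise values through the integral representation. By \cref{lem:FTC}, we have
\[
 W(t)=W(a)+\int_{[a,t)}D_gW\,d\mu_g=\int_{[a,t)}D_gW\,d\mu_g
\]
for every \(t\in[a,b]\), since \(W(a)=0\). It is therefore enough to prove that \(D_gW=0\) \(\mu_g\)-almost everywhere. Once this is known, the representation immediately gives \(W\equiv0\) on all of \([a,b]\), including the points that \(\mu_g\) does not see.

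To show that the density vanishes, we test against \(\mathcal T_g\) by means of the lateral integration by parts formula, \cref{thm:ibp}. Take any \(\varphi\in\mathcal T_g\) and apply \eqref{eq:ibp-left} with \(u=W\). The boundary terms vanish because \(W(a)=W(b)=0\). The term \(\int W D_g\varphi\,d\mu_g\) also vanishes, since \(W=0\) \(\mu_g\)-a.e. and \(D_g\varphi\) is bounded. What remains is
\[
 \int_{[a,b)}D_gW\,S_g\varphi\,d\mu_g=0
 \qquad(\varphi\in\mathcal T_g).
\]
The main step is to show that the family \(\{S_g\varphi:\varphi\in\mathcal T_g\}\) is rich enough to force \(D_gW=0\). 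We will use the particular test functions
\[
 \varphi_E(t):=\int_{[a,t)}\mathbf 1_E\,d\mu_g.
\]
By \eqref{eq:Sg-formula}, \(S_g\varphi_E(r)=\mu_g(E\cap[a,r])\). Fubini's theorem, justified exactly as in \cref{lem:product-measure}, then converts the identity above into
\[
 \int_E\Bigl(\int_{[q,b)}D_gW\,d\mu_g\Bigr)d\mu_g(q)=0
\]
for every Borel set \(E\). Hence \(q\mapsto\int_{[q,b)}D_gW\,d\mu_g\) vanishes \(\mu_g\)-a.e.

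By the representation, this function equals \(W(b)-W(q)=-W(q)\), so this conclusion alone yields nothing new. This shows that the real content must come from comparing \(W\) with \(S_gW\). We therefore turn to a direct argument that exploits the atoms and the support of \(\mu_g\).

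Set \(F(t):=\int_{[a,t)}D_gW\,d\mu_g=W(t)\), which is zero \(\mu_g\)-a.e. First consider an atom \(t\in D_g\). There \(W(t)=0\), and, by \eqref{eq:Sg-formula}, \(W(t^+)=W(t)+\Delta g(t)D_gW(t)\). We need \(W(t^+)=0\). Let \(r\downarrow t\) through points where \(W=0\). If \(\mu_g((t,t+\varepsilon))>0\) for every \(\varepsilon>0\), such points exist arbitrarily close to \(t\), and the right limit of \(W\) equals \(0\). If instead \(\mu_g((t,s))=0\) for some \(s>t\), let \(s^*\) be the supremum of such \(s\). Then \(W\) is constant equal to \(W(t^+)\) on \((t,s^*]\). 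If \(s^*=b\), then \(W(t^+)=W(b)=0\). Otherwise, \(s^*\) is either an atom, where \(W(s^*)=0\), or a point of non-atomic accumulation of mass to its right, where \(W(s^{*+})=W(s^*)\) is a limit of zeros. In every case \(W(t^+)=0\), and hence \(D_gW(t)=0\).

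Next consider the non-atomic part. Here \(W=0\) a.e. implies that the measure \(D_gW\,\mu_g\) satisfies \(\int_{[s,t)}D_gW\,d\mu_g=W(t)-W(s)=0\) for \(\mu_g\)-a.e. \(s,t\). Upgrading this to all intervals, by approximating the endpoints from within the support together with the plateau argument above, gives \(D_gW\,\mu_g=0\) on the semiring of half-open intervals. \cref{rem:density-unique} then yields \(D_gW=0\) a.e.

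The step we expect to be the main obstacle is this careful treatment of plateaus of \(g\), meaning intervals of zero \(\mu_g\)-mass, which lie beyond the last point detected by \(\mu_g\). This is exactly where the hypothesis \(W(b)=0\) is needed.
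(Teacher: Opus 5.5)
Your argument, once the integration-by-parts detour is set aside, is essentially the paper's proof: \(W\) vanishes at atoms by the almost-everywhere hypothesis, at non-atomic support points by continuity of \(W\) there together with the accumulation of zeros, and on each zero-mass plateau because \(W\) is constant there with its value fixed at the right endpoint (a support point, or \(b\), which is where \(W(b)=0\) enters), so that \(\nu([s,t))=W(t)-W(s)=0\) and uniqueness on the semiring gives \(D_gW=0\). The integration-by-parts test is, as you noticed, vacuous and can be dropped. Your ``upgrading'' step is really the pointwise statement \(W\equiv0\) on \([a,b]\), so it is cleaner to prove that directly at every support point and on every component of the complement, as the paper does, rather than only sketching it after the separate atom paragraph.
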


\begin{proof}
Put \(z=D_gW\) and let \(\nu=z\mu_g\).  By the integral representation,
\[
 W(t)=\nu([a,t)),\qquad
 \nu([s,t))=W(t)-W(s).
\]
We first show that \(W\) vanishes on the closed support of \(\mu_g\).  If
\(t\) is an atom, then \(\mu_g(\{t\})>0\), so the almost-everywhere hypothesis
itself gives \(W(t)=0\).  If \(t\in\operatorname{supp}\mu_g\) is not an atom,
then \(\nu(\{t\})=0\), and the cumulative function
\(r\mapsto\nu([a,r))\) is continuous at \(t\).  Were \(W(t)\ne0\), continuity
would give a neighborhood on which \(|W|\) is bounded away from zero.  Every
neighborhood of a support point has positive \(\mu_g\)-measure, contradicting
\(W=0\) almost everywhere.  Thus \(W=0\) on \(\operatorname{supp}\mu_g\).

\newrev{It remains to treat the components of the complement of the support
without confusing an anterior value at an atom with its posterior value. If
$\operatorname{supp}\mu_g=\varnothing$, then $\mu_g=0$, hence $\nu=0$, and the
increment formula immediately makes $W$ constant on $[a,b]$; the trace
$W(a)=0$ then gives $W\equiv0$. Assume from now on that the support is
nonempty. Let $J$ be a connected component of
$[a,b]\setminus\operatorname{supp}\mu_g$. For any $x<y$ in $J$,
$\mu_g([x,y))=0$, whence $\nu([x,y))=0$ and
\[
 W(y)-W(x)=\nu([x,y))=0.
\]
Thus $W$ is constant on $J$; denote the constant by $c_J$. If the right
endpoint $r:=\sup J$ satisfies $r<b$, then $r$ belongs to
$\operatorname{supp}\mu_g$. Choose $t_n\in J$ with $t_n\uparrow r$. The
integral representation makes $W$ left-continuous, so
\[
 c_J=\lim_{n\to\infty}W(t_n)=W(r)=0.
\]
If $\sup J=b$, the same conclusion follows from left continuity at $b$ and
the prescribed trace $W(b)=0$ (and is immediate when $b\in J$). This
right-endpoint argument is essential when the left endpoint of $J$ is an atom:
the value on $J$ is then the posterior value at that atom and need not equal
the anterior value $W(t)$. We have therefore shown that $W=0$ on every
component of the complement as well as on the support. Consequently
$W\equiv0$ on all of $[a,b]$.}
It follows that \(\nu([s,t))=0\) for every half-open interval.  Uniqueness of
finite measures on the generating semiring gives \(\nu=0\), and uniqueness of
the Radon--Nikodym density yields \(z=D_gW=0\) almost everywhere.
\end{proof}
\end{revision}

\subsection{Weak--strong equivalence}

\begin{theorem}[Weak characterization of $W_g^{1,1}$]\label{thm:weak-strong}
Let $u,v\in L_g^1([a,b))$ and $u_a,u_b\in\F$. The following statements are equivalent:
\begin{enumerate}[label=\roman*)]
 \item $v$ is the weak $g$-derivative of $([u],u_a,u_b)$ in the sense of \eqref{eq:weak-global};
 \item there exists $U\in\ACg([a,b])$ such that
 \[
 U=u\quad\mug\text{-a.e.},\qquad
 U(a)=u_a,\quad U(b)=u_b,\quad D_gU=v\quad\mug\text{-a.e.};
 \]
 \item the function
 \begin{equation}\label{eq:weak-reconstruction}
 U(t):=u_a+\int_{[a,t)}v(s)\,d\mug(s)
 \end{equation}
 \rev{represents \([u]\) and satisfies both endpoint conditions
 \(U(a)=u_a\) and \(U(b)=u_b\).}
\end{enumerate}
In particular, the weak derivative, when it exists, is unique.
\end{theorem}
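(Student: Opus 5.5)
I would prove the cycle of implications (iii)$\Rightarrow$(ii)$\Rightarrow$(i)$\Rightarrow$(iii), and then deduce uniqueness from the reconstruction formula.

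\textbf{Step 1: (iii)$\Rightarrow$(ii).} By \cref{lem:FTC}, the function $U$ defined in \eqref{eq:weak-reconstruction} belongs to $\ACg([a,b])$, satisfies $U(a)=u_a$, and has $D_gU=v$ almost everywhere. The remaining conditions in (ii) are exactly the hypotheses of (iii).

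\textbf{Step 2: (ii)$\Rightarrow$(i).} Take $\varphi\in\mathcal T_g$; it belongs to $\ACg$ with $D_g\varphi\in L_g^\infty$. Apply \cref{thm:ibp}, in the form \eqref{eq:ibp-left}, to the pair $U,\varphi$:
\[
U(b)\varphi(b)-U(a)\varphi(a)=\int_{[a,b)}UD_g\varphi\,d\mug+\int_{[a,b)}D_gU\,S_g\varphi\,d\mug .
\]
Now replace $U$ by $u$ and $D_gU$ by $v$; this changes nothing, since the integrals only see $\mug$-a.e.\ classes. Finally, insert the traces $U(a)=u_a$ and $U(b)=u_b$. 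The result is exactly \eqref{eq:weak-global}.

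\textbf{Step 3: (i)$\Rightarrow$(iii).} This is the main step. Define $U$ by \eqref{eq:weak-reconstruction}.
\begin{itemize}
\item By Step 1, the state $(U,u_a,U(b))$ satisfies the conclusion of (ii) with derivative $v$.
\item By Step 2, it therefore satisfies \eqref{eq:weak-global}, with $u$ replaced by $U$ and $u_b$ replaced by $U(b)$.
\item Subtract this identity from the hypothesis (i). The $v$-terms cancel, leaving, with $w:=u-U\in L_g^1$,
\[
\int_{[a,b)}wD_g\varphi\,d\mug=(u_b-U(b))\varphi(b)\qquad(\varphi\in\mathcal T_g).
\]
\end{itemize}

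First restrict to $\varphi\in\mathcal T_{g,0}$. The right-hand side vanishes, so \cref{lem:range-zero} and \cref{lem:dubois} show that $w=c$ almost everywhere for some constant $c$.

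Next test with two specific functions.
\begin{itemize}
\item Test with $\varphi\equiv1$, which lies in $\mathcal T_g$ with $D_g\varphi=0$. This gives $u_b-U(b)=0$, so $U(b)=u_b$.
\item Test with $\varphi(t)=g(t)-g(a)$, which lies in $\mathcal T_g$ with $\psi=1$ and $\varphi(b)=M_g$. This gives $cM_g=(u_b-U(b))M_g=0$. Since $M_g>0$, we get $c=0$.
\end{itemize}
Hence $U=u$ almost everywhere, and (iii) holds.

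The delicate point is exactly the one highlighted in \cref{ex:one-atom-tests}: the zero-trace tests alone only give $w$ constant. The boundary term, evaluated on non-vanishing tests, is what pins down both the constant and the final trace. I would be careful to use the initial trace $u_a$ built into $U$, and to exploit that $\varphi(a)$ drops out after subtraction.

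\textbf{Step 4: uniqueness.} Suppose $v_1$ and $v_2$ are both weak derivatives of the same traced state. By (iii), the two primitives
\[
u_a+\int_{[a,t)}v_i\,d\mug,\qquad i=1,2,
\]
both represent $[u]$ and both have traces $u_a$ and $u_b$. Their difference $W$ is therefore in $\ACg$, vanishes almost everywhere, and satisfies $W(a)=W(b)=0$. \Cref{lem:traced-representative-injective} then gives $D_gW=v_1-v_2=0$ almost everywhere. Alternatively, \cref{rem:density-unique} yields the same conclusion once $W\equiv0$ is known.
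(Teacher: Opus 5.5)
Your proposal is correct and follows the paper's proof essentially step by step. It uses the same subtraction identity $\int_{[a,b)}(u-U)D_g\varphi\,d\mu_g=(u_b-U(b))\varphi(b)$ and the same combination of \cref{lem:range-zero} and \cref{lem:dubois} on zero-trace tests. It uses the same two tests $\varphi\equiv1$ and $\varphi=g-g(a)$ to fix $U(b)=u_b$ and $c=0$, and the same appeal to \cref{lem:traced-representative-injective} for uniqueness. The only cosmetic difference is that you obtain the identity for the reconstructed $U$ by rerunning your Steps 1--2 on the state $([U],u_a,U(b))$, whereas the paper applies integration by parts directly.
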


\begin{proof}
The equivalence ii)$\Leftrightarrow$iii) follows from \cref{lem:FTC}.

Assume ii). Apply \eqref{eq:ibp-left} to $U$ and $\varphi\in\mathcal T_g$. Since $U=u$ $\mug$-almost everywhere, $D_gU=v$, and the endpoint traces are $u_a,u_b$, we obtain \eqref{eq:weak-global}. Thus ii)$\Rightarrow$i).

Assume i), and define $V$ by \eqref{eq:weak-reconstruction}. Then
\rev{$V\in\ACg([a,b])$}, $V(a)=u_a$, and $D_gV=v$. Integration by parts gives
\[
 \int VD_g\varphi\,d\mug+\int vS_g\varphi\,d\mug
 =V(b)\varphi(b)-u_a\varphi(a).
\]
Subtracting this identity from \eqref{eq:weak-global}, we obtain
\begin{equation}\label{eq:orthogonality}
 \int_{[a,b)}(u-V)D_g\varphi\,d\mug
 =(u_b-V(b))\varphi(b)
\end{equation}
for every $\varphi\in\mathcal T_g$.

First choose $\varphi\in\mathcal T_{g,0}$. The right-hand side vanishes and, by \cref{lem:range-zero},
\[
 \int (u-V)\psi\,d\mug=0
\]
for every zero-mean $\psi\in L_g^\infty$. By \cref{lem:dubois}, there exists $c\in\F$ such that $u-V=c$ $\mug$-almost everywhere.

Now choose the constant test function $\varphi\equiv1$. In \eqref{eq:orthogonality}, $D_g\varphi=0$ and $\varphi(b)=1$, so
\[
 u_b=V(b).
\]
With this identity, \eqref{eq:orthogonality} reduces to
\[
 c\int D_g\varphi\,d\mug=0
\qquad\forall\varphi\in\mathcal T_g.
\]
Take
\[
 \varphi_*(t):=\mug([a,t))=g(t)-g(a).
\]
Then $\varphi_*\in\mathcal T_g$, $D_g\varphi_*=1$, and
\[
 \int D_g\varphi_*\,d\mug=M_g>0.
\]
Therefore $c=0$. Hence $u=V$ $\mug$-almost everywhere, and iii) holds.

\begin{revision}
For uniqueness, suppose that \(v_1\) and \(v_2\) are weak derivatives of the
same traced state, and let \(V_1,V_2\) be the representatives reconstructed
above.  Then \(W:=V_1-V_2\) belongs to \(AC_g\), satisfies
\(W=0\) \(\mu_g\)-almost everywhere, and has the two zero endpoint traces.
By \cref{lem:traced-representative-injective}, \(W\equiv0\) and
\(D_gW=v_1-v_2=0\) almost everywhere.  Thus both the strong representative and
the weak density are unique.  The two endpoint traces are essential in this
argument: the final trace eliminates a possible undetected value on a terminal
plateau of \(g\).
\end{revision}
\end{proof}

\begin{corollary}[Characterization of $W_g^{1,p}$]\label{cor:Wgp-weak}
Let $1\le p\le\infty$ and assume $M_g>0$. A state $([u],u_a,u_b)$ admits a representative $U\in\mathcal W_g^{1,p}([a,b])$ if and only if there exists $v\in L_g^p([a,b))$ such that
\[
 \int uD_g\varphi\,d\mu_g
 +\int vS_g\varphi\,d\mu_g
 =u_b\varphi(b)-u_a\varphi(a)
 \qquad\forall\varphi\in\mathcal T_g.
\]
In this case, $U$ and $v=D_gU$ are unique under the prescribed traces.
\end{corollary}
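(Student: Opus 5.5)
The plan is to reduce the corollary to \cref{thm:weak-strong} by exploiting that the mass is finite, which gives the inclusion $L_g^p([a,b))\subset L_g^1([a,b))$. With $M_g<\infty$, Hölder's inequality yields $\norm{w}_{L_g^1}\le M_g^{1-1/p}\norm{w}_{L_g^p}$. So a state $([u],u_a,u_b)$ with $u\in L_g^p$, together with any candidate $v\in L_g^p$, falls within the scope of \cref{def:weak-derivative}. Both integrals in the identity are finite, since $D_g\varphi\in L_g^\infty$ and $S_g\varphi$ is bounded by \cref{prop:Sg-properties}.

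\emph{Necessity.} Suppose $U\in\mathcal W_g^{1,p}([a,b])$ represents the state, meaning $U=u$ $\mu_g$-a.e., $U(a)=u_a$, $U(b)=u_b$. Set $v:=D_gU$, which lies in $L_g^p$ by the definition \eqref{eq:Wgp-strong}. Since $U\in AC_g$, implication ii)$\Rightarrow$i) of \cref{thm:weak-strong} applies directly: for each $\varphi\in\mathcal T_g\subset AC_g$, the lateral integration by parts \eqref{eq:ibp-left} gives the required identity.

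\emph{Sufficiency.} Suppose $v\in L_g^p$ satisfies the identity for all $\varphi\in\mathcal T_g$. Since $v\in L_g^1$, \cref{thm:weak-strong} i)$\Rightarrow$iii) shows that $U(t)=u_a+\int_{[a,t)}v\,d\mu_g$ belongs to $AC_g$, equals $u$ $\mu_g$-a.e., has traces $u_a,u_b$, and satisfies $D_gU=v$ by \cref{lem:FTC}. It remains to check $U\in\mathcal W_g^{1,p}$.
\begin{itemize}
 \item $D_gU=v\in L_g^p$ by assumption.
 \item $U|_{[a,b)}\in L_g^p$ holds for two reasons. First, $U=u$ a.e. with $u\in L_g^p$. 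Independently, \eqref{eq:bounded-AC} shows $U$ is bounded, so it lies in $L_g^\infty\subset L_g^p$ on a finite measure space.
\end{itemize}
Measurability is not an issue, because $U$ is a Stieltjes primitive and hence Borel.

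\emph{Uniqueness.} This is inherited from the uniqueness clause of \cref{thm:weak-strong}, which rests on \cref{lem:traced-representative-injective}. Suppose $U_1,U_2$ are two representatives with the same traces and weak densities $v_1,v_2$. Then $W=U_1-U_2$ vanishes a.e. and at both endpoints, so $W\equiv0$ and $v_1=v_2$ a.e.

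The only point needing care is the case $p=\infty$ and the role of $M_g>0$. For $p=\infty$ the inclusion $L_g^\infty\subset L_g^1$ is immediate from finiteness of the measure. The assumption $M_g>0$ is exactly what the proof of \cref{thm:weak-strong} uses, through the test function $\varphi_*=g-g(a)$ and the averaging in \cref{lem:dubois}. So no new obstacle is expected: the main step is simply verifying that the hypotheses of the $p=1$ theorem are met and that its conclusions upgrade to $L_g^p$ integrability.
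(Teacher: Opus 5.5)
Your proposal is correct and follows the paper's own route. Necessity comes from lateral integration by parts (\cref{thm:ibp}). Sufficiency applies \cref{thm:weak-strong} through the inclusion $L_g^p\subset L_g^1$ on a finite measure space, and then places the reconstructed representative in $\mathcal W_g^{1,p}$ using $D_gU=v\in L_g^p$ and the boundedness estimate \eqref{eq:bounded-AC}; uniqueness is inherited from the main theorem.
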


\begin{proof}
If the strong representative exists, the identity follows from \cref{thm:ibp}. Conversely, since $\mu_g$ is finite, the inclusion $L_g^p\subset L_g^1$ for $p\ge1$ allows us to apply \cref{thm:weak-strong}. The reconstructed representative satisfies $D_gU=v\in L_g^p$ and is bounded by \eqref{eq:bounded-AC}; therefore $U\in L_g^p$ and $U\in\mathcal W_g^{1,p}$. Uniqueness is inherited from the main theorem.
\end{proof}

\begin{remark}[The degenerate case]
If $M_g=0$, then $\mug=0$ and the weak identity reduces to
\[
 0=u_b\varphi(b)-u_a\varphi(a).
\]
All functions in $\mathcal T_g$ are constant, so this is equivalent to $u_a=u_b$. There is no interior information and no identifiable derivative in $L_g^1$, which is trivial modulo the zero measure. This is why we assume $M_g>0$.
\end{remark}

\section{$L_g^p$ duality, a closed weak operator, and functional inequalities}\label{sec:operator}

This section strengthens the preceding characterization in three directions. We first replace the bounded test class by the natural Sobolev duality. We then formulate weak differentiation as a closed operator on traced states. Finally, we establish Poincar\'e--Stieltjes inequalities and explicit norm equivalences.

\subsection{Continuity of traces and of the posterior operator}

Fix $1\le p\le\infty$ and let $p'$ denote the conjugate exponent, with the conventions $1'=\infty$ and $\infty'=1$. For $\Phi\in\mathcal W_g^{1,p'}([a,b])$, the integral representation gives
\[
 \Phi(t)=\Phi(a)+\int_{[a,t)}D_g\Phi\,d\mu_g.
\]
Consequently,
\begin{equation}\label{eq:trace-basic-p}
 \abs{\Phi(t)}
 \le \abs{\Phi(a)}+M_g^{1/p}\norm{D_g\Phi}_{L_g^{p'}},
 \qquad t\in[a,b],
\end{equation}
because $\norm{1}_{L_g^p}=M_g^{1/p}$, with the usual interpretation when $p=\infty$. The same estimate holds for $S_g\Phi$, since
\[
 S_g\Phi(t)=\Phi(a)+\int_{[a,t]}D_g\Phi\,d\mu_g
\]
at atoms and $S_g\Phi=\Phi$ elsewhere.

\begin{lemma}[Control of the initial trace]\label{lem:trace-control-p}
If $M_g>0$ and $\Phi\in\mathcal W_g^{1,p'}([a,b])$, then
\begin{equation}\label{eq:trace-control-p}
 \abs{\Phi(a)}
 \le M_g^{-1/p'}\norm{\Phi}_{L_g^{p'}}
      +M_g^{1/p}\norm{D_g\Phi}_{L_g^{p'}}.
\end{equation}
Consequently, the maps $\Phi\mapsto\Phi(a)$ and $\Phi\mapsto\Phi(b)$ are continuous for the graph norm, and there exists $C=C(M_g,p)$ such that
\begin{equation}\label{eq:Sg-continuity-p}
 \norm{S_g\Phi}_{L_g^{p'}}
 \le C\bigl(\norm{\Phi}_{L_g^{p'}}+\norm{D_g\Phi}_{L_g^{p'}}\bigr).
\end{equation}
\end{lemma}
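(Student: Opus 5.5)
The plan is to obtain \eqref{eq:trace-control-p} by averaging the integral representation, and then to derive the remaining claims from \eqref{eq:trace-basic-p}.

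\textbf{Step 1: the averaging estimate.} For every \(t\in[a,b)\) the integral representation gives
\[
 \Phi(a)=\Phi(t)-\int_{[a,t)}D_g\Phi\,d\mu_g ,
\]
hence
\[
 \abs{\Phi(a)}\le\abs{\Phi(t)}+\norm{D_g\Phi}_{L_g^1}.
\]
I will integrate this inequality in \(t\) over \([a,b)\) with respect to \(\mu_g\) and divide by \(M_g>0\). This yields
\[
 \abs{\Phi(a)}\le M_g^{-1}\norm{\Phi}_{L_g^1}+\norm{D_g\Phi}_{L_g^1}.
\]
Hölder's inequality on the finite measure space then gives \(\norm{w}_{L_g^1}\le M_g^{1/p}\norm{w}_{L_g^{p'}}\), because \(\norm{1}_{L_g^p}=M_g^{1/p}\). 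Applied to both terms, this produces
\[
 M_g^{-1}M_g^{1/p}=M_g^{-1/p'}
\]
in front of \(\norm{\Phi}_{L_g^{p'}}\) and \(M_g^{1/p}\) in front of \(\norm{D_g\Phi}_{L_g^{p'}}\). This is exactly \eqref{eq:trace-control-p}. The endpoint exponents \(p=1\) and \(p=\infty\) need only the conventions \(1/\infty=0\): then \(M_g^{0}=1\), and the Hölder step becomes \(\norm{w}_{L^1}\le M_g\norm{w}_{L^\infty}\) or an identity.

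\textbf{Step 2: continuity of the traces.} Continuity of \(\Phi\mapsto\Phi(a)\) is immediate from \eqref{eq:trace-control-p} and linearity. For \(\Phi(b)\), I will use \eqref{eq:trace-basic-p} at \(t=b\) and substitute \eqref{eq:trace-control-p}. This gives
\[
 \abs{\Phi(b)}\le M_g^{-1/p'}\norm{\Phi}_{L_g^{p'}}+2M_g^{1/p}\norm{D_g\Phi}_{L_g^{p'}}.
\]

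\textbf{Step 3: the estimate \eqref{eq:Sg-continuity-p}.} The remark following \eqref{eq:trace-basic-p} gives a uniform bound
\[
 \sup_{t}\abs{S_g\Phi(t)}\le\abs{\Phi(a)}+M_g^{1/p}\norm{D_g\Phi}_{L_g^{p'}} .
\]
At atoms this follows from \eqref{eq:Sg-formula} together with Hölder on \([a,t]\); elsewhere \(S_g\Phi=\Phi\). Inserting \eqref{eq:trace-control-p} and using
\[
 \norm{S_g\Phi}_{L_g^{p'}}\le M_g^{1/p'}\norm{S_g\Phi}_{L_g^\infty},
\]
I obtain \eqref{eq:Sg-continuity-p} with
\[
 C=\max\bigl(1,\;2M_g^{1/p'+1/p}\bigr)=\max(1,2M_g).
\]
Measurability of \(S_g\Phi\) is supplied by \cref{prop:Sg-properties}.

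\textbf{Main obstacle.} The only delicate point is Step 1. I must justify integrating the pointwise inequality over \(t\). Here \(\abs{\Phi}\) is Borel measurable and lies in \(L_g^{p'}\subset L_g^1\), so the integral makes sense. I also need the exponent bookkeeping to be correct at \(p\in\{1,\infty\}\). Neither step should require more than Hölder's inequality and the fact that \(M_g<\infty\).
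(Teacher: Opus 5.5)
Your proposal is correct and follows essentially the paper's argument: bound $\abs{\Phi(a)}$ pointwise via the integral representation, average over $[a,b)$, recover $\Phi(b)$ from $\Phi(b)=\Phi(a)+\int_{[a,b)}D_g\Phi\,d\mu_g$, and obtain \eqref{eq:Sg-continuity-p} from the uniform bound $\abs{\Phi(a)}+M_g^{1/p}\norm{D_g\Phi}_{L_g^{p'}}$, which produces the same constants. The only cosmetic difference is that the paper takes the $L_g^{p'}$ norm of the pointwise inequality directly, with a separate essential-supremum case for $p'=\infty$, whereas you average in $L_g^1$ and then apply H\"older; this gives identical constants and treats $p\in\{1,\infty\}$ uniformly.
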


\begin{proof}
For every $t\in[a,b]$,
\[
 \abs{\Phi(a)}
 \le \abs{\Phi(t)}+\int_{[a,t)}\abs{D_g\Phi}\,d\mu_g
 \le \abs{\Phi(t)}+M_g^{1/p}\norm{D_g\Phi}_{L_g^{p'}}.
\]
Take the $L_g^{p'}$ norm of both sides, viewing the left-hand side as a constant function. If $p'<\infty$, then
\[
 M_g^{1/p'}\abs{\Phi(a)}
 \le \norm{\Phi}_{L_g^{p'}}
 +M_g^{1/p'}M_g^{1/p}\norm{D_g\Phi}_{L_g^{p'}},
\]
and division by $M_g^{1/p'}$ gives \eqref{eq:trace-control-p}. If $p'=\infty$, the same estimate follows by taking the \finalrev{essential supremum} and using $M_g^{1/p}=M_g$. This proves the initial trace estimate.

Continuity of the final trace follows from
\[
 \Phi(b)=\Phi(a)+\int_{[a,b)}D_g\Phi\,d\mu_g.
\]
Finally, \eqref{eq:trace-basic-p} yields
\[
 \norm{S_g\Phi}_{L_g^{p'}}
 \le M_g^{1/p'}\abs{\Phi(a)}+M_g\norm{D_g\Phi}_{L_g^{p'}},
\]
and substitution of \eqref{eq:trace-control-p} gives \eqref{eq:Sg-continuity-p}.
\end{proof}

\begin{proposition}[Endpoint cases of the duality]\label{prop:endpoint-duality}
The estimates in \cref{lem:trace-control-p} and the continuity of $S_g$ remain valid at $p=1$ and $p=\infty$ without using reflexivity or identifying the dual of $L_g^\infty$ with $L_g^1$. More precisely:
\begin{enumerate}[label=\roman*)]
 \item if $p=1$, and hence $p'=\infty$, then
 \[
  \abs{\Phi(a)}\le \norm{\Phi}_{L_g^\infty}
      +M_g\norm{D_g\Phi}_{L_g^\infty},
 \]
 and
 \[
  \norm{S_g\Phi}_{L_g^\infty}
  \le \norm{\Phi}_{L_g^\infty}
      +M_g\norm{D_g\Phi}_{L_g^\infty};
 \]
 \item if $p=\infty$, and hence $p'=1$, then
 \[
  \abs{\Phi(a)}\le M_g^{-1}\norm{\Phi}_{L_g^1}
      +\norm{D_g\Phi}_{L_g^1},
 \]
 and
 \[
  \norm{S_g\Phi}_{L_g^1}
  \le \norm{\Phi}_{L_g^1}+2M_g\norm{D_g\Phi}_{L_g^1}.
 \]
\end{enumerate}
In both cases, every term in \eqref{eq:weak-p-duality} is controlled by the elementary $L^1$--$L^\infty$ H\"older inequality.
\end{proposition}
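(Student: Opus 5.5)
The plan is to specialize the proof of \cref{lem:trace-control-p} to the two endpoint exponents. It uses only the pointwise integral representation of $\Phi$ and $S_g\Phi$ together with the elementary H\"older inequality between $L_g^1$ and $L_g^\infty$; no duality theory enters. The common starting point is the pointwise inequality, valid for every $t\in[a,b]$,
\[
 \abs{\Phi(a)}\le\abs{\Phi(t)}+\int_{[a,b)}\abs{D_g\Phi}\,d\mu_g ,
\]
together with the bound $\abs{S_g\Phi(t)}\le\abs{\Phi(a)}+\int_{[a,b)}\abs{D_g\Phi}\,d\mu_g$ from \eqref{eq:Sg-formula} and \cref{prop:Sg-properties}.

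\emph{Case $p=1$, $p'=\infty$.} I bound $\int\abs{D_g\Phi}\,d\mu_g\le M_g\norm{D_g\Phi}_{L_g^\infty}$. The left side of the starting inequality is a constant, so I take the essential supremum in $t$ of the right side. This gives
\[
 \abs{\Phi(a)}\le\norm{\Phi}_{L_g^\infty}+M_g\norm{D_g\Phi}_{L_g^\infty}.
\]
Because $M_g>0$, the set of $t$ where the pointwise bound holds with $\abs{\Phi(t)}\le\norm{\Phi}_{L_g^\infty}$ has positive measure, hence is nonempty. For $S_g\Phi$ I will not route through $\abs{\Phi(a)}$, since that would double the constant. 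Instead I work directly:
\begin{itemize}
\item off the atoms, $S_g\Phi=\Phi$, so the bound by $\norm{\Phi}_{L_g^\infty}$ is immediate;
\item at an atom $t$, $S_g\Phi(t)=\Phi(t)+\Delta g(t)D_g\Phi(t)$, and since atoms carry positive mass, $\abs{\Phi(t)}\le\norm{\Phi}_{L_g^\infty}$ and $\abs{D_g\Phi(t)}\le\norm{D_g\Phi}_{L_g^\infty}$ there, with $\Delta g(t)\le M_g$.
\end{itemize}
Together these yield $\norm{S_g\Phi}_{L_g^\infty}\le\norm{\Phi}_{L_g^\infty}+M_g\norm{D_g\Phi}_{L_g^\infty}$.

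\emph{Case $p=\infty$, $p'=1$.} I integrate the starting inequality over $[a,b)$ against $\mu_g$. This gives $M_g\abs{\Phi(a)}\le\norm{\Phi}_{L_g^1}+M_g\norm{D_g\Phi}_{L_g^1}$, and dividing by $M_g$ gives the trace bound. For $S_g\Phi$ I integrate the pointwise bound for $\abs{S_g\Phi(t)}$, obtaining $\norm{S_g\Phi}_{L_g^1}\le M_g\abs{\Phi(a)}+M_g\norm{D_g\Phi}_{L_g^1}$. Substituting the trace bound yields $\norm{\Phi}_{L_g^1}+2M_g\norm{D_g\Phi}_{L_g^1}$, exactly the stated constant.

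\emph{Final claim and main obstacle.} The pairings in \eqref{eq:weak-p-duality} have the form $\int uD_g\Phi$ and $\int vS_g\Phi$ with $u,v\in L_g^p$. They are controlled by H\"older's inequality for the pair $(L^1,L^\infty)$ in either order, combined with the bounds above and the continuity of $\Phi(a),\Phi(b)$. The only delicate point is the passage from pointwise bounds to essential-supremum bounds in the $p=1$ case. For $S_g\Phi$ at atoms this uses that atoms have positive mass, so essential bounds hold pointwise there; I expect this to be the step needing the most care.
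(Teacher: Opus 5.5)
Your proposal is correct and follows essentially the same argument as the paper. It uses the pointwise representation bounds for $\Phi(a)$ and $S_g\Phi$, then an essential-supremum step in $t$ for $p=1$ and an integration step in $t$ for $p=\infty$, both relying on $M_g>0$. For the posterior bound it uses the direct decomposition $S_g\Phi=\Phi+\Delta g\,D_g\Phi$ on $D_g$ (with $S_g\Phi=\Phi$ elsewhere) when $p=1$, and substitutes the trace estimate to get the constant $2M_g$ when $p=\infty$. Your explicit remarks that atoms carry positive mass (so essential bounds hold pointwise there) and that $\Delta g(t)\le M_g$ spell out steps the paper leaves implicit.
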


\begin{proof}
Suppose first that $p=1$. For every $t\in[a,b]$,
\[
 \Phi(a)=\Phi(t)-\int_{[a,t)}D_g\Phi\,d\mu_g.
\]
Taking absolute values and using
\[
 \int_{[a,t)}\abs{D_g\Phi}\,d\mu_g
 \le M_g\norm{D_g\Phi}_{L_g^\infty},
\]
we obtain the first estimate. At atoms,
\[
 S_g\Phi(t)=\Phi(a)+\int_{[a,t]}D_g\Phi\,d\mu_g,
\]
and the analogous formula with $[a,t)$ holds elsewhere. Hence
\[
 \norm{S_g\Phi}_{L_g^\infty}
 \le \abs{\Phi(a)}+M_g\norm{D_g\Phi}_{L_g^\infty}.
\]
The stated bound follows directly from $S_g\Phi=\Phi$ outside $D_g$ and $S_g\Phi=\Phi+\Delta g\,D_g\Phi$ on $D_g$, and it is even slightly sharper than the preceding estimate.

Now suppose that $p=\infty$. Integrate with respect to $t$ the inequality
\[
 \abs{\Phi(a)}\le \abs{\Phi(t)}+\norm{D_g\Phi}_{L_g^1}.
\]
Since the left-hand side is constant and $M_g>0$,
\[
 M_g\abs{\Phi(a)}
 \le \norm{\Phi}_{L_g^1}+M_g\norm{D_g\Phi}_{L_g^1}.
\]
Using the posterior representation once more,
\[
 \abs{S_g\Phi(t)}
 \le \abs{\Phi(a)}+\norm{D_g\Phi}_{L_g^1},
\]
and integration gives
\[
 \norm{S_g\Phi}_{L_g^1}
 \le M_g\abs{\Phi(a)}+M_g\norm{D_g\Phi}_{L_g^1}
 \le \norm{\Phi}_{L_g^1}+2M_g\norm{D_g\Phi}_{L_g^1}.
\]
The terms in the weak identity are therefore justified by
\[
 \abs{\textstyle\int uD_g\Phi\,d\mu_g}
 \le \norm{u}_{L_g^1}\norm{D_g\Phi}_{L_g^\infty}
\]
when $p=1$, and by the analogous estimate with $L^1$ and $L^\infty$ interchanged when $p=\infty$. No representation of the topological dual of $L_g^\infty$ has been used.
\end{proof}

\subsection{Characterization in the natural duality}

\begin{theorem}[Weak $L_g^p$--$L_g^{p'}$ characterization]\label{thm:weak-strong-p}
Let $1\le p\le\infty$, let $u,v\in L_g^p([a,b))$, and let $u_a,u_b\in\F$. The following statements are equivalent:
\begin{enumerate}[label=\roman*)]
 \item for every $\Phi\in\mathcal W_g^{1,p'}([a,b])$,
 \begin{equation}\label{eq:weak-p-duality}
  \int_{[a,b)}uD_g\Phi\,d\mu_g
  +\int_{[a,b)}vS_g\Phi\,d\mu_g
  =u_b\Phi(b)-u_a\Phi(a);
 \end{equation}
 \item the function
 \begin{equation}\label{eq:reconstruction-p}
  U(t):=u_a+\int_{[a,t)}v\,d\mu_g
 \end{equation}
 belongs to $\mathcal W_g^{1,p}([a,b])$, satisfies $U=u$ $\mu_g$-almost everywhere, and \rev{fulfills both trace conditions \(U(a)=u_a\) and \(U(b)=u_b\)}.
\end{enumerate}
In this case, $D_gU=v$ $\mu_g$-almost everywhere and the representative $U$ is unique.
\end{theorem}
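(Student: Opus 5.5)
The plan is to reduce \cref{thm:weak-strong-p} to \cref{thm:weak-strong} and \cref{cor:Wgp-weak}. The key observation is that the test class \(\mathcal T_g\) sits inside \(\mathcal W_g^{1,p'}([a,b])\) for every \(1\le p\le\infty\). Indeed, an element \(\varphi\in\mathcal T_g\) is \(g\)-absolutely continuous, is bounded by \eqref{eq:bounded-AC}, and has \(D_g\varphi\in L_g^\infty\subset L_g^{p'}\) because \(\mu_g\) is finite.

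For i)\(\Rightarrow\)ii) I will proceed in three steps.
\begin{enumerate}[label=\arabic*)]
 \item Restrict \eqref{eq:weak-p-duality} to \(\Phi\in\mathcal T_g\). Since \(u,v\in L_g^p\subset L_g^1\), this gives exactly \eqref{eq:weak-global}, so \(v\) is the weak \(g\)-derivative of \(([u],u_a,u_b)\) in the sense of \cref{def:weak-derivative}.
 \item Invoke \cref{thm:weak-strong}, item iii). The function \(U\) in \eqref{eq:reconstruction-p} then represents \([u]\), satisfies \(U(a)=u_a\) and \(U(b)=u_b\), and has \(D_gU=v\) by \cref{lem:FTC}.
 \item Check \(U\in\mathcal W_g^{1,p}\). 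We have \(D_gU=v\in L_g^p\), and \(U\) is bounded by \eqref{eq:bounded-AC}, so \(U|_{[a,b)}\in L_g^p\) on the finite measure space. Alternatively, \(U=u\in L_g^p\) almost everywhere directly.
\end{enumerate}
Uniqueness of the representative and of the density is inherited from \cref{lem:traced-representative-injective}, exactly as at the end of the proof of \cref{thm:weak-strong}.

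For ii)\(\Rightarrow\)i), fix \(\Phi\in\mathcal W_g^{1,p'}([a,b])\subset AC_g([a,b])\) and apply the lateral integration by parts \eqref{eq:ibp-left} with the pair \((U,\Phi)\):
\[
 U(b)\Phi(b)-U(a)\Phi(a)=\int_{[a,b)}UD_g\Phi\,d\mu_g+\int_{[a,b)}D_gU\,S_g\Phi\,d\mu_g.
\]
Then substitute \(U=u\) \(\mu_g\)-almost everywhere, \(D_gU=v\), and the traces \(U(a)=u_a\), \(U(b)=u_b\). The only care needed is that replacing \(U\) by \(u\) inside the first integral is legitimate, since the two agree almost everywhere and the integral is finite. \Cref{thm:ibp} applies to any two \(AC_g\) functions, so no integrability issue arises beyond finiteness of the terms. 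Finiteness follows from H\"older together with \cref{lem:trace-control-p}, which gives \(S_g\Phi\in L_g^{p'}\), or directly from \cref{prop:Sg-properties}, which shows \(S_g\Phi\) is bounded. The endpoint exponents are covered by \cref{prop:endpoint-duality}.

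The main point requiring attention is the inclusion \(\mathcal T_g\subset\mathcal W_g^{1,p'}\), which is what allows the larger test class to be traded for the smaller one. The subtlety is only when \(p'=\infty\), that is \(p=1\). In that case \(\mathcal W_g^{1,\infty}\) coincides with \(\mathcal T_g\) by \cref{lem:FTC}, so no density or reflexivity argument is required at all. Because the reduction only passes from a larger test class to a smaller one, no approximation step is needed in any case.
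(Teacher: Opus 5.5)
Your proposal is correct and matches the paper's proof: for i)$\Rightarrow$ii) you restrict to the core $\mathcal T_g\subset\mathcal W_g^{1,p'}$ and invoke \cref{thm:weak-strong}, and for ii)$\Rightarrow$i) you apply the lateral integration by parts \eqref{eq:ibp-left} to $(U,\Phi)$. The only minor difference is in uniqueness. You appeal to \cref{lem:traced-representative-injective}, which rules out any other strong representative with the same traces. The paper instead cites uniqueness of the density and of the initial trace in the integral representation. Either argument suffices.
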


\begin{proof}
Assume ii). Lateral integration by parts applied to $U$ and $\Phi$ gives
\[
 \int U D_g\Phi\,d\mu_g+\int D_gU S_g\Phi\,d\mu_g
 =U(b)\Phi(b)-U(a)\Phi(a).
\]
Substituting $U=u$ almost everywhere, $D_gU=v$, $U(a)=u_a$, and $U(b)=u_b$ yields i). All integrals are finite by H\"older's inequality, \cref{lem:trace-control-p}, and \eqref{eq:Sg-continuity-p}.

Assume i). Every function in $\mathcal T_g$ belongs to $\mathcal W_g^{1,p'}$ because $D_g\Phi\in L_g^\infty\subset L_g^{p'}$ and $\Phi$ is bounded on a finite-measure space. Hence \eqref{eq:weak-p-duality} holds, in particular, for all tests in $\mathcal T_g$. The global reconstruction theorem in \cref{sec:weak} then provides exactly the function \eqref{eq:reconstruction-p}, with $U=u$ almost everywhere and $U(b)=u_b$. Since $u,v\in L_g^p$, one has $U\in L_g^p$ and $D_gU=v\in L_g^p$, so $U\in\mathcal W_g^{1,p}$. Uniqueness follows from uniqueness of both the density and the initial trace in the integral representation.
\end{proof}

\begin{remark}
The theorem shows that the initial use of $\mathcal T_g$ entails no loss of generality. This class is a sufficient test core for reconstruction, whereas $\mathcal W_g^{1,p'}$ provides the natural dual formulation once the representative has been identified.
\end{remark}

\subsection{The traced weak operator is closed}

Define the state space
\[
 X_g^p:=L_g^p([a,b))\times\F^2,
 \qquad
 \norm{([u],u_a,u_b)}_{X_g^p}
 :=\norm{u}_{L_g^p}+\abs{u_a}+\abs{u_b}.
\]
Let $\mathcal A_{g,p}$ be the operator
\[
 \mathcal A_{g,p}([u],u_a,u_b)=v,
\]
\begin{revision}
where \(v\) is the unique weak \(g\)-derivative density characterized by
\eqref{eq:weak-p-duality}.  Equivalently,
\[
 \mathcal D(\mathcal A_{g,p})
 :=\left\{([u],u_a,u_b)\in X_g^p:\ \exists v\in L_g^p
 \text{ satisfying \eqref{eq:weak-p-duality}}\right\},
\]
and \(\mathcal A_{g,p}([u],u_a,u_b)\) is that density.  Its uniqueness follows
from \cref{thm:weak-strong-p}.
\end{revision}

\begin{theorem}[Closedness of the weak operator]\label{thm:closed-operator}
The operator
\[
 \mathcal A_{g,p}:\mathcal D(\mathcal A_{g,p})\subset X_g^p\longrightarrow L_g^p
\]
is linear and closed. Its domain is precisely the set of states that admit a representative in $\mathcal W_g^{1,p}$ with the prescribed traces, and
\[
 \ker\mathcal A_{g,p}
 =\{([c],c,c):c\in\F\}.
\]
\end{theorem}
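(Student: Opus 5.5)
The plan is to read everything off \cref{thm:weak-strong-p}, which identifies $\mathcal D(\mathcal A_{g,p})$ with the set of states $([u],u_a,u_b)$ admitting a representative $U\in\mathcal W_g^{1,p}$ with $U(a)=u_a$, $U(b)=u_b$, and $\mathcal A_{g,p}$ with $D_gU$. This gives the domain description directly.

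\emph{Linearity.} Identity \eqref{eq:weak-p-duality} is linear in the triple $(u,u_a,u_b)$ and in $v$ jointly. Hence sums and scalar multiples of states in the domain stay in the domain, with the corresponding combinations of densities, and uniqueness of the density gives linearity of $\mathcal A_{g,p}$.

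\emph{Kernel.} If $\mathcal A_{g,p}\mathbf u=0$, the reconstruction \eqref{eq:reconstruction-p} gives $U\equiv u_a$. Then $u=u_a$ almost everywhere and $u_b=U(b)=u_a$, so $\mathbf u=([c],c,c)$. Conversely, the constant $U\equiv c$ lies in $\mathcal W_g^{1,p}$ with $D_gU=0$, so \cref{thm:weak-strong-p} gives $([c],c,c)$ in the domain with zero image.

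\emph{Closedness.} Take $\mathbf u_n=([u_n],u_{a,n},u_{b,n})\in\mathcal D(\mathcal A_{g,p})$ with $\mathbf u_n\to([u],u_a,u_b)$ in $X_g^p$ and $v_n:=\mathcal A_{g,p}\mathbf u_n\to v$ in $L_g^p$. I will pass to the limit in \eqref{eq:weak-p-duality} for a fixed $\Phi\in\mathcal W_g^{1,p'}$. By H\"older,
\[
\Bigl|\int (u_n-u)D_g\Phi\,d\mu_g\Bigr|\le\norm{u_n-u}_{L_g^p}\norm{D_g\Phi}_{L_g^{p'}}\to0 .
\]
Similarly,
\[
\Bigl|\int (v_n-v)S_g\Phi\,d\mu_g\Bigr|\le\norm{v_n-v}_{L_g^p}\norm{S_g\Phi}_{L_g^{p'}}\to0,
\]
where $\norm{S_g\Phi}_{L_g^{p'}}<\infty$ by \eqref{eq:Sg-continuity-p}, or by \cref{prop:endpoint-duality} at the endpoint exponents. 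The boundary terms converge because $u_{a,n}\to u_a$, $u_{b,n}\to u_b$, and $\Phi(a),\Phi(b)$ are finite. Thus the limit state satisfies \eqref{eq:weak-p-duality} with density $v\in L_g^p$. So it lies in $\mathcal D(\mathcal A_{g,p})$ with $\mathcal A_{g,p}([u],u_a,u_b)=v$, which proves closedness.

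The only delicate point is finiteness of $\int vS_g\Phi$ at the endpoint exponents $p=1$ and $p=\infty$. It is handled by the elementary bounds of \cref{prop:endpoint-duality} rather than by any duality argument. No reflexivity or weak compactness is needed, since the convergence is strong in both the state and the image.
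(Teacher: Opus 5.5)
Your proposal is correct and follows essentially the same route as the paper. Linearity comes from the weak identity. Closedness comes from passing to the limit termwise in \eqref{eq:weak-p-duality}, using H\"older's inequality, the bound on $S_g\Phi$, and the fixed endpoint values of $\Phi$. The domain and the kernel $\{([c],c,c)\}$ are read off from the reconstruction in \cref{thm:weak-strong-p}.
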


\begin{proof}
Linearity follows directly from \eqref{eq:weak-p-duality}. To prove closedness, suppose that
\[
 ([u_n],a_n,b_n)\to([u],a,b)\quad\text{in }X_g^p,
 \qquad
 v_n:=\mathcal A_{g,p}([u_n],a_n,b_n)\to v\quad\text{in }L_g^p.
\]
For each $\Phi\in\mathcal W_g^{1,p'}$, the weak identity for the $n$th state is
\[
 \int u_nD_g\Phi\,d\mu_g+\int v_nS_g\Phi\,d\mu_g
 =b_n\Phi(b)-a_n\Phi(a).
\]
H\"older's inequality gives convergence of the first integral to the corresponding term with $u$. By \eqref{eq:Sg-continuity-p}, the second integral converges to the corresponding term with $v$. Continuity of the endpoint traces of $\Phi$ allows passage to the limit on the right-hand side. Thus $([u],a,b)$ satisfies \eqref{eq:weak-p-duality} with derivative $v$. Hence it belongs to the domain and $\mathcal A_{g,p}([u],a,b)=v$.

The domain characterization is \cref{thm:weak-strong-p}. Finally, if the operator vanishes, the strong representative satisfies $D_gU=0$, and therefore
\[
 U(t)=U(a)+\int_{[a,t)}0\,d\mu_g=U(a)
\]
for every $t$. Thus the state is $([c],c,c)$. The reverse inclusion is immediate.
\end{proof}

\subsection{Poincar\'e--Stieltjes inequality and norm equivalence}

\begin{theorem}[Poincar\'e--Stieltjes inequality]\label{thm:poincare}
Let $1\le p\le\infty$ and $U\in\mathcal W_g^{1,p}([a,b])$.
\begin{enumerate}[label=\roman*)]
 \item If $U(a)=0$, then
 \begin{equation}\label{eq:poincare-simple}
  \norm{U}_{L_g^p}\le M_g\norm{D_gU}_{L_g^p}.
 \end{equation}
 \item \finalrev{If $U(a)=0$ and $1<p<\infty$, then the sharper estimate}
 \begin{equation}\label{eq:poincare-refined}
  \norm{U}_{L_g^p}^p
  \le \norm{D_gU}_{L_g^p}^p
  \int_{[a,b)}\mu_g([a,t))^{p-1}\,d\mu_g(t)
 \end{equation}
 also holds.
 \item In general,
 \begin{equation}\label{eq:trace-norm-equivalence}
  \norm{U}_{L_g^p}+\norm{D_gU}_{L_g^p}
  \asymp
  \abs{U(a)}+\norm{D_gU}_{L_g^p},
 \end{equation}
 with constants depending only on $M_g$ and $p$.
\end{enumerate}
\end{theorem}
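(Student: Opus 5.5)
All three parts will follow from the integral representation $U(t)=U(a)+\int_{[a,t)}D_gU\,d\mu_g$ of \cref{lem:FTC}, together with H\"older's inequality on the finite measure space $([a,b),\mu_g)$.

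\textbf{Part i).} For $U(a)=0$ we have, for every $t\in[a,b]$,
\[
 \abs{U(t)}\le\int_{[a,t)}\abs{D_gU}\,d\mu_g\le \mu_g([a,t))^{1/p'}\norm{D_gU}_{L_g^p}\le M_g^{1/p'}\norm{D_gU}_{L_g^p}.
\]
Taking the $L_g^p$ norm of the constant bound contributes a factor $M_g^{1/p}$, so $M_g^{1/p}M_g^{1/p'}=M_g$ gives \eqref{eq:poincare-simple}. The endpoint cases $p=1$ and $p=\infty$ follow by the same computation with the conventions $1/\infty=0$. (For $p=1$ one can alternatively just use $\abs{U(t)}\le\norm{D_gU}_{L_g^1}$ and integrate.)

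\textbf{Part ii).} Keep the sharper pointwise bound
\[
 \abs{U(t)}^p\le \mu_g([a,t))^{p/p'}\norm{D_gU}_{L_g^p}^p=\mu_g([a,t))^{p-1}\norm{D_gU}_{L_g^p}^p,
\]
which uses $p/p'=p-1$, and then integrate in $t$ over $[a,b)$ against $\mu_g$. The only point needing care is that the H\"older step is applied on the half-open interval $[a,t)$. This matches exactly the anterior value $U(t)$, so atoms are handled correctly: the mass at $t$ itself is not included. Measurability of $t\mapsto\mu_g([a,t))=g(t)-g(a)$ is immediate from monotonicity.

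\textbf{Part iii).} For the direction $\lesssim$, write $U=U(a)+U_0$ with $U_0(t)=\int_{[a,t)}D_gU\,d\mu_g$, so that $U_0(a)=0$ and $D_gU_0=D_gU$. Then
\[
 \norm{U}_{L_g^p}\le M_g^{1/p}\abs{U(a)}+M_g\norm{D_gU}_{L_g^p}
\]
by part i). For the reverse direction, apply \cref{lem:trace-control-p}, with the roles of $p$ and $p'$ interchanged (that lemma was stated for $\mathcal W_g^{1,p'}$, but its proof uses only the exponent, so it applies verbatim to any exponent in $[1,\infty]$). This yields
\[
 \abs{U(a)}\le M_g^{-1/p}\norm{U}_{L_g^p}+M_g^{1/p'}\norm{D_gU}_{L_g^p},
\]
which uses $M_g>0$.

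The main obstacle, mild as it is, is bookkeeping at the endpoint exponents, and making sure the constants depend only on $M_g$ and $p$. In particular, the reverse trace estimate is where the nondegeneracy $M_g>0$ is genuinely needed. For the cases $p=1$ and $p=\infty$ I would cite \cref{prop:endpoint-duality} directly.
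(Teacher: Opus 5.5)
Your proposal is correct and follows essentially the same route as the paper. For parts i) and ii), the paper uses the same pointwise H\"older bound $\abs{U(t)}\le\mu_g([a,t))^{1/p'}\norm{D_gU}_{L_g^p}$, either integrated directly or first raised to the power $p$. For part iii), it uses the same splitting $U=U(a)+(U-U(a))$ together with the averaging trace estimate of \cref{lem:trace-control-p} with $p$ and $p'$ interchanged; since that lemma is stated for every exponent in $[1,\infty]$, you can apply it directly with $p'$ in place of $p$ without re-inspecting its proof.
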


\begin{proof}
If $U(a)=0$, then
\[
 U(t)=\int_{[a,t)}D_gU\,d\mu_g.
\]
By H\"older's inequality,
\[
 \abs{U(t)}
 \le \mu_g([a,t))^{1/p'}\norm{D_gU}_{L_g^p}
 \le M_g^{1/p'}\norm{D_gU}_{L_g^p}.
\]
Taking the $L_g^p$ norm and using $\norm{1}_{L_g^p}=M_g^{1/p}$ yields \eqref{eq:poincare-simple}. If $1<p<\infty$, raise the first pointwise estimate to the power $p$ before integrating to obtain \eqref{eq:poincare-refined}.

For the norm equivalence, write $U=U(a)+(U-U(a))$. The triangle inequality and \eqref{eq:poincare-simple} give
\[
 \norm{U}_{L_g^p}
 \le M_g^{1/p}\abs{U(a)}+M_g\norm{D_gU}_{L_g^p}.
\]
This controls the graph norm by the right-hand side of \eqref{eq:trace-norm-equivalence}. Conversely,
\[
 \abs{U(a)}
 \le M_g^{-1/p}\norm{U}_{L_g^p}+M_g^{1/p'}\norm{D_gU}_{L_g^p},
\]
which follows either by the averaging argument used in \cref{lem:trace-control-p} or directly from the integral representation. This proves the reverse estimate.
\end{proof}

\begin{remark}[The refined constant in the presence of atoms]\label{rem:poincare-atoms}
The quantity
\[
 C_{g,p}^p:=\int_{[a,b)}\mu_g([a,t))^{p-1}\,d\mu_g(t)
\]
in \eqref{eq:poincare-refined} is obtained directly by raising the pointwise H\"older estimate to the power $p$ and is therefore valid without modification when $\mu_g$ has atoms. We do not use the identity $C_{g,p}^p=M_g^p/p$, which follows automatically from the cumulative change of variables only in the non-atomic case. In the presence of jumps, $C_{g,p}^p$ contains the left sums
\[
 \sum_{t\in D_g}\mu_g([a,t))^{p-1}\Delta g(t),
\]
and should be retained in this form or estimated simply by $M_g^p$. This prevents a continuous-measure simplification from being transferred incorrectly to the atomic setting.
\end{remark}

\section{Nonlinear equations in weak form}\label{sec:nonlinear}

\subsection{Finite-dimensional Carath\'eodory assumptions}

Let $d\in\mathbb N$ and let $F:[a,b]\times\R^d\to\R^d$. We say that $F$ satisfies the global Carath\'eodory--Lipschitz assumptions if the following conditions hold:
\begin{enumerate}[label=\textup{(H\arabic*)}]
 \item for every $x\in\R^d$, the map $t\mapsto F(t,x)$ is $\mu_g$-measurable;
 \item for $\mu_g$-almost every $t$, the map $x\mapsto F(t,x)$ is continuous;
 \item there exist nonnegative functions $m,L\in L_g^1([a,b))$ such that
 \begin{align}
 \norm{F(t,0)}&\le m(t),\label{eq:F-growth0}\\
 \norm{F(t,x)-F(t,y)}&\le L(t)\norm{x-y}\label{eq:F-Lipschitz}
 \end{align}
 for $\mu_g$-almost every $t$ and all $x,y\in\R^d$.
\end{enumerate}
It follows from \eqref{eq:F-growth0}--\eqref{eq:F-Lipschitz} that
\begin{equation}\label{eq:F-linear-growth}
 \norm{F(t,x)}\le m(t)+L(t)\norm{x}.
\end{equation}
\begin{finalrevision}
For every \(\overline{\mu}_g\)-measurable \(U\), the composition
\(t\mapsto F(t,U(t))\) is \(\overline{\mu}_g\)-measurable. Indeed, after
redefining \(F\) on a common \(\mu_g\)-null set on which the continuity
condition may fail, the usual Carath\'eodory composition theorem applies; the
original composition differs only on that null set and is therefore measurable
for the completed measure. If \(U\) is bounded,
\eqref{eq:F-linear-growth} implies
\(F(\cdot,U(\cdot))\in L_g^1\).
\end{finalrevision}

\begin{definition}[Integral solution]\label{def:integral-solution}
We understand $AC_g([a,b];\R^d)$ componentwise. A function
$U\in AC_g([a,b];\R^d)$ is an integral solution of
\begin{equation}\label{eq:nonlinear-general}
 D_gU=F(t,U),\qquad U(a)=u_0\in\R^d,
\end{equation}
if
\begin{equation}\label{eq:integral-equation}
 U(t)=u_0+\int_{[a,t)}F(s,U(s))\,d\mu_g(s),
 \qquad t\in[a,b],
\end{equation}
where the integral is taken componentwise, equivalently as a Bochner integral in $\R^d$.
\end{definition}

\begin{definition}[Global vector-valued weak solution]\label{def:weak-solution}
Let $([u],u_0,u_b)$ be a state with $u\in L_g^1([a,b);\R^d)$. It is called a global weak solution when $F(\cdot,u(\cdot))\in L_g^1$ and
\begin{equation}\label{eq:weak-nonlinear}
 \int_{[a,b)}uD_g\varphi\,d\mu_g
 +\int_{[a,b)}F(t,u)S_g\varphi\,d\mu_g
 =u_b\varphi(b)-u_0\varphi(a)
\end{equation}
for every $\varphi\in\mathcal T_g$. The equality is vector-valued and is equivalent to the corresponding componentwise identities.
\end{definition}

\begin{theorem}[Equivalence of formulations]\label{thm:nonlinear-equivalence}
Under the Carath\'eodory assumptions, a state $([u],u_0,u_b)$ is a global weak solution if and only if $[u]$ admits a representative $U$ satisfying \eqref{eq:integral-equation} and $U(b)=u_b$.
\end{theorem}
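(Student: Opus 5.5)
The plan is to reduce \cref{thm:nonlinear-equivalence} to the linear characterization \cref{thm:weak-strong}, applied componentwise with the density $v(t):=F(t,u(t))$. In the nonlinear problem the candidate density depends on the class $[u]$. This causes no difficulty, because $F(\cdot,u)$ and $F(\cdot,U)$ coincide $\mu_g$-almost everywhere whenever $U=u$ almost everywhere. Integrals against $\mu_g$ therefore do not distinguish between the two.

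\emph{Weak $\Rightarrow$ integral.} Assume $([u],u_0,u_b)$ is a global weak solution. By \cref{def:weak-solution}, $v:=F(\cdot,u(\cdot))\in L_g^1([a,b);\R^d)$. Each component $v_k$ is $\overline{\mu}_g$-measurable, by the composition remark following \eqref{eq:F-linear-growth}. Writing \eqref{eq:weak-nonlinear} componentwise, the traced state $([u_k],u_{0,k},u_{b,k})$ satisfies \eqref{eq:weak-global} with density $v_k$ for every $\varphi\in\mathcal T_g$. The implication i)$\Rightarrow$iii) of \cref{thm:weak-strong} then gives the following facts for
\[
 U(t):=u_0+\int_{[a,t)}F(s,u(s))\,d\mu_g(s):
\]
it lies in $AC_g([a,b];\R^d)$, satisfies $U=u$ $\mu_g$-a.e., and has $U(b)=u_b$. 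Since $U=u$ almost everywhere, $F(s,U(s))=F(s,u(s))$ for $\mu_g$-a.e.\ $s$. The integrand may therefore be replaced by $F(s,U(s))$, and this yields \eqref{eq:integral-equation}.

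\emph{Integral $\Rightarrow$ weak.} Let $U$ be a representative of $[u]$ satisfying \eqref{eq:integral-equation} and $U(b)=u_b$. One first needs $F(\cdot,U)\in L_g^1$, so that the right-hand side of \eqref{eq:integral-equation} is meaningful. This is implicit in \cref{def:integral-solution}. Alternatively, it follows from $U\in AC_g$ being bounded by \eqref{eq:bounded-AC}, together with \eqref{eq:F-linear-growth} and the measurability remark. \cref{lem:FTC}, applied componentwise, then gives $D_gU=F(\cdot,U)$ $\mu_g$-a.e. and $U(a)=u_0$. Applying the lateral integration by parts \eqref{eq:ibp-left} to each component $U_k$ and each $\varphi\in\mathcal T_g\subset AC_g$ gives the following:
\[
 \int U_kD_g\varphi\,d\mu_g+\int F_k(t,U)S_g\varphi\,d\mu_g=u_{b,k}\varphi(b)-u_{0,k}\varphi(a).
\]
Replacing $U$ by $u$ inside both integrals, which is allowed by almost-everywhere equality, gives \eqref{eq:weak-nonlinear}. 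The membership $F(\cdot,u)\in L_g^1$ required in \cref{def:weak-solution} follows for the same reason.

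The main obstacle is not analytic but concerns measurability and representatives. Specifically, one must check that $F(\cdot,u)$ is well defined as an $L_g^1$ class independently of the chosen representative of $[u]$, and that it agrees with $F(\cdot,U)$. I will handle this through the completed-measure convention and the Carath\'eodory composition remark. One must also observe that in the weak-to-integral direction no growth bound is needed, because integrability is part of the definition of a weak solution. Consequently, (H3) is used only to guarantee integrability in the converse direction.
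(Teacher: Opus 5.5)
Your proposal is correct and follows the paper's proof step for step. You apply \cref{thm:weak-strong} componentwise with density $v=F(\cdot,u)$, replace $F(\cdot,u)$ by $F(\cdot,U)$ using $U=u$ $\mu_g$-a.e., and for the converse use the fundamental theorem followed by lateral integration by parts \eqref{eq:ibp-left} on each component; your extra bookkeeping (integrability of $F(\cdot,U)$ from boundedness of $U$ and \eqref{eq:F-linear-growth}, and the membership $F(\cdot,u)\in L_g^1$ required by \cref{def:weak-solution}) only makes explicit what the paper leaves implicit.
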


\begin{proof}
Apply \cref{thm:weak-strong} componentwise. If the state is a weak solution, then
\[
 v(t):=F(t,u(t))
\]
is its weak derivative. The reconstruction theorem gives
\[
 U(t)=u_0+\int_{[a,t)}F(s,u(s))\,d\mu_g(s),
 \qquad U=u\quad\mu_g\text{-a.e.}
\]
Since Carath\'eodory maps preserve almost-everywhere equality under composition,
\[
 F(\cdot,u(\cdot))=F(\cdot,U(\cdot))
 \quad\mu_g\text{-a.e.},
\]
and therefore $U$ satisfies \eqref{eq:integral-equation}. The final trace is supplied by the same reconstruction theorem.

Conversely, if $U$ satisfies \eqref{eq:integral-equation}, the fundamental theorem gives $D_gU=F(\cdot,U)$ almost everywhere. Applying lateral integration by parts to every component of $U$ and to $\varphi$ yields \eqref{eq:weak-nonlinear}.
\end{proof}

\subsection{Existence, uniqueness, and continuous dependence for a general derivator}

Set
\[
 H(t):=\int_{[a,t)}L(s)\,d\mu_g(s),
 \qquad H_*:=H(b)=\norm{L}_{L_g^1}.
\]

\begin{lemma}[Estimate on strictly ordered simplices]\label{lem:ordered-simplex}
Let $\lambda$ be the finite measure defined by $d\lambda=L\,d\mu_g$. For $n\ge1$ and $t\in[a,b]$,
\[
 \int_{a\le s_n<\cdots<s_1<t}
 d\lambda(s_n)\cdots d\lambda(s_1)
 \le \frac{H(t)^n}{n!}.
\]
\end{lemma}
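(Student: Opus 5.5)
Note first that $H(t)=\lambda([a,t))$, since $d\lambda=L\,d\mu_g$. We prove the estimate by induction on $n$, writing
\[
 I_n(t):=\int_{a\le s_n<\cdots<s_1<t}d\lambda(s_n)\cdots d\lambda(s_1).
\]
For $n=1$ we have $I_1(t)=\lambda([a,t))=H(t)$, so the claim holds with equality. For the induction step, Fubini (legitimate because $\lambda$ is a finite positive measure) gives the recursion
\[
 I_{n+1}(t)=\int_{[a,t)}I_n(s_1)\,d\lambda(s_1),
\]
because the constraint $s_{n+1}<\cdots<s_2<s_1$ is exactly the simplex defining $I_n(s_1)$. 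The inductive hypothesis then yields
\[
 I_{n+1}(t)\le\frac1{n!}\int_{[a,t)}H(s)^n\,d\lambda(s),
\]
and it remains to show that this integral is at most $H(t)^{n+1}/(n+1)$.

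This last bound is the only nontrivial point, and it is where atoms matter. In the continuous case the chain rule would give equality. Here we use instead that $H$ is nondecreasing, left-continuous, and coincides with the cumulative function of $\lambda$. The plan is to compare the integral with the $(n+1)$-fold product measure of the full cube. Using the symmetry of $\lambda^{\otimes(n+1)}$ on $[a,t)^{n+1}$, the integral $\int_{[a,t)}H(s)^n\,d\lambda(s)$ equals the $\lambda^{\otimes(n+1)}$-measure of the set
\[
 \{(s_0,\dots,s_n)\in[a,t)^{n+1}:\ s_j<s_0\ \text{for all } j\ge1\},
\]
where $s_0$ is a strict maximum of the tuple. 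The sets $E_i$, $i=0,\dots,n$, on which $s_i$ is the strict maximum are pairwise disjoint, since only one coordinate can be a strict maximum, and they all have the same measure by permutation invariance. Their union lies in $[a,t)^{n+1}$, so
\[
 (n+1)\int_{[a,t)}H^n\,d\lambda\le\lambda([a,t))^{n+1}=H(t)^{n+1}.
\]
The point to stress is that the inequality is not an equality: the diagonal sets, on which ties occur, carry positive product mass when atoms are present, and this is precisely the slack that replaces the continuous chain rule.

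Substituting this bound into the recursion gives $I_{n+1}(t)\le H(t)^{n+1}/(n+1)!$, which closes the induction. The direct symmetrization argument could even be applied to $I_n$ itself. The strictly ordered region $\{s_n<\cdots<s_1\}$ and its $n!$ images under coordinate permutations are pairwise disjoint and lie in $[a,t)^n$, so $n!\,I_n(t)\le H(t)^n$ immediately. We would present this one-step symmetrization as the proof, keeping the induction only as the explanation of how the estimate is used in the iteration.
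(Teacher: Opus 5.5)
Your proposal is correct, and the one-step symmetrization you plan to present as the proof is exactly the paper's argument. The $n!$ strictly ordered regions in $[a,t)^n$ are pairwise disjoint and have equal $\lambda^{\otimes n}$-measure by permutation invariance, they lie in a cube of mass $H(t)^n$, and the diagonals, which may carry positive mass at atoms, account for the inequality. Your inductive route via strict maxima is also valid, but it only applies the same symmetrization one coordinate at a time and so adds nothing.
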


\begin{proof}
Inside the product space $[a,t)^n$, consider the $n!$ sets obtained by imposing each of the possible strict orderings of the coordinates. They are pairwise disjoint. By invariance of the product measure under permutations, all have the same measure. Their union is contained in the full cube; we do not claim that it equals the cube, because diagonals may have positive measure when $\lambda$ has atoms. Therefore,
\[
 n!\,\lambda^{\otimes n}\{s_n<\cdots<s_1<t\}
 \le \lambda([a,t))^n=H(t)^n,
\]
which proves the estimate.
\end{proof}

\begin{theorem}[Global existence and uniqueness]\label{thm:existence-general}
Assume \textup{(H1)--(H3)}. For every $u_0\in\R^d$, there exists a unique integral solution $U\in AC_g([a,b];\R^d)$ of \eqref{eq:nonlinear-general}. This function is also the unique global weak solution with initial trace $u_0$ and final trace $U(b)$.
\end{theorem}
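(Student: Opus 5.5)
The plan is to apply Picard iteration to the integral operator
\[
 (\mathcal P U)(t):=u_0+\int_{[a,t)}F(s,U(s))\,d\mu_g(s)
\]
on the space $B$ of bounded $\overline{\mu}_g$-measurable (indeed Borel) functions $[a,b]\to\R^d$ with the sup norm $\|\cdot\|_{\infty,[a,b]}$, which is a Banach space. The key estimate is the ordered-simplex bound of \cref{lem:ordered-simplex}, used in place of a weighted norm. The weighted-norm trick $e^{-kH(t)}$ is awkward with atoms, because $H$ jumps and the usual calculus identity $\int e^{kH}dH=(e^{kH}-1)/k$ fails. Note first that $\mathcal P$ maps $B$ into $AC_g\cap B$: if $U\in B$, the paragraph after (H3) gives $F(\cdot,U(\cdot))\in L_g^1$ via \eqref{eq:F-linear-growth}, and \cref{lem:FTC} together with \eqref{eq:bounded-AC} shows that $\mathcal P U\in AC_g([a,b];\R^d)$ is bounded.

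For contraction of iterates, I would prove by induction that for $U,V\in B$ and all $t\in[a,b]$,
\[
 \norm{\mathcal P^nU(t)-\mathcal P^nV(t)}
 \le \|U-V\|_{\infty}\,\lambda^{\otimes n}\{a\le s_n<\cdots<s_1<t\}.
\]
The case $n=1$ is \eqref{eq:F-Lipschitz} integrated over $[a,t)$, which gives the factor $\lambda([a,t))=H(t)$. For the inductive step, write the difference at $t$ as an integral over $s_1\in[a,t)$ against $d\lambda(s_1)$, bounded by the previous estimate evaluated at $s_1$, which carries the strict constraint $s_2<s_1$. This step is delicate but correct: the integral defining $\mathcal P$ is over $[a,t)$, so at an atom $s_1$ the inner variable ranges only over $[a,s_1)$. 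Hence the domains are strictly ordered, and diagonal atomic mass never enters. By \cref{lem:ordered-simplex} the bound is at most $H_*^n/n!\,\|U-V\|_\infty$. So some iterate $\mathcal P^n$ is a strict contraction, and the Banach fixed point theorem (the iterate version) yields a unique fixed point $U\in B$. Being a fixed point, it lies in $AC_g$ and satisfies \eqref{eq:integral-equation}.

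Uniqueness among integral solutions is immediate, since every $AC_g$ solution is bounded by \eqref{eq:bounded-AC} and hence lies in $B$. For the weak statement I would invoke \cref{thm:nonlinear-equivalence}. The state $([U],u_0,U(b))$ is a global weak solution because $U$ satisfies \eqref{eq:integral-equation}. Conversely, any global weak solution $([u],u_0,U(b))$ has a representative $\widetilde U$ solving \eqref{eq:integral-equation} with $\widetilde U(b)=U(b)$, so $\widetilde U=U$ by the uniqueness just shown, hence $[u]=[U]$ and the state coincides. The main obstacle I anticipate is the inductive iterate estimate, and specifically keeping the strict ordering so that \cref{lem:ordered-simplex} applies despite atoms. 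A secondary obstacle is measurability of $F(\cdot,U(\cdot))$ for $U\in B$, which the paper's remark after (H3) already settles.
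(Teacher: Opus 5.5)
Your proof is correct and uses the same key ingredient as the paper: the strictly-ordered-simplex bound of \cref{lem:ordered-simplex}, which applies at atoms because the integration domains are half-open intervals $[a,t)$, followed by \cref{thm:nonlinear-equivalence} for the weak statement. The difference is only in packaging: the paper bounds Picard successive differences and then gives a separate iterated Gronwall argument for uniqueness, whereas you combine both into one contraction estimate for $\mathcal P^n$ on bounded measurable functions and apply the iterate form of Banach's fixed point theorem.
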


\begin{proof}
\emph{Step 1: Picard iteration.} Define $U_0(t)=u_0$ and, recursively,
\begin{equation}\label{eq:Picard}
 U_{n+1}(t)=u_0+\int_{[a,t)}F(s,U_n(s))\,d\mu_g(s).
\end{equation}
If $U_n$ is measurable and bounded, the Carath\'eodory composition theorem gives measurability of $F(\cdot,U_n)$, while \eqref{eq:F-linear-growth} gives its integrability. Thus $U_{n+1}$ is well defined and belongs to $AC_g$. The induction starts from the constant function $U_0$.

\emph{Step 2: estimates for successive differences.} Set
\[
 C:=\norm{m}_{L_g^1}+\norm{u_0}\norm{L}_{L_g^1}.
\]
From \eqref{eq:Picard} and \eqref{eq:F-linear-growth},
\[
 \norm{U_1(t)-U_0(t)}
 \le\int_{[a,t)}\bigl(m(s)+L(s)\norm{u_0}\bigr)\,d\mu_g(s)
 \le C.
\]
Let $d_n(t)=\norm{U_{n+1}(t)-U_n(t)}$. The Lipschitz condition yields
\[
 d_n(t)\le\int_{[a,t)}L(s)d_{n-1}(s)\,d\mu_g(s),\qquad n\ge1.
\]
Iterating this inequality and using \cref{lem:ordered-simplex}, we obtain
\begin{equation}\label{eq:Picard-factorial}
 d_n(t)\le C\frac{H(t)^n}{n!}
 \le C\frac{H_*^n}{n!}.
\end{equation}
The series $\sum_n C H_*^n/n!$ converges. Hence the Weierstrass criterion shows that $(U_n)$ converges uniformly to a bounded function $U$.

\emph{Step 3: passage to the limit.} By \eqref{eq:F-Lipschitz},
\[
 \norm{F(t,U_n(t))-F(t,U(t))}
 \le L(t)\norm{U_n-U}_{L^\infty}.
\]
Integration gives
\[
 \norm{F(\cdot,U_n)-F(\cdot,U)}_{L_g^1}
 \le \norm{L}_{L_g^1}\norm{U_n-U}_{L^\infty}\longrightarrow0.
\]
We may therefore pass to the limit in \eqref{eq:Picard} and obtain \eqref{eq:integral-equation}. The fundamental theorem then gives $U\in AC_g$ and $D_gU=F(\cdot,U)$.

\emph{Step 4: uniqueness.} Let $U$ and $V$ be two solutions with the same initial datum. Both are bounded, so
\[
 D:=\|U-V\|_{\infty,[a,b]}<\infty.
\]
The integral equations imply
\[
 \norm{U(t)-V(t)}\le\int_{[a,t)}L(s)\norm{U(s)-V(s)}\,d\mu_g(s).
\]
After $n$ iterations and another application of \cref{lem:ordered-simplex},
\[
 \norm{U(t)-V(t)}\le D\frac{H(t)^n}{n!}.
\]
For fixed $t$, the right-hand side tends to zero as $n\to\infty$, so $U(t)=V(t)$. The weak uniqueness statement follows from \cref{thm:nonlinear-equivalence}.
\begin{revision}
This factorial argument is compatible with, and complements, the Stieltjes
exponential approach to linear first-order equations developed in
\cite{FerMarToj22}.  The explicit exponential used below makes the dependence
estimate sharper while preserving the singular-continuous and atomic parts of
the derivator.
\end{revision}
\end{proof}

\begin{lemma}[Stieltjes exponential]\label{lem:g-exponential}
Let $L\in L_g^1([a,b))$, $L\ge0$, and write
\[
 \mu_g=\mu_g^{\mathrm c}+\sum_{s\in D_g}\Delta g(s)\delta_s.
\]
Define
\begin{equation}\label{eq:g-exponential}
 E_L(t):=
 \exp\left(\int_{[a,t)}L\,d\mu_g^{\mathrm c}\right)
 \prod_{s\in D_g\cap[a,t)}\bigl(1+L(s)\Delta g(s)\bigr).
\end{equation}
The product converges, $E_L$ is finite, and
\begin{equation}\label{eq:g-exp-integral}
 E_L(t)=1+\int_{[a,t)}L(s)E_L(s)\,d\mu_g(s).
\end{equation}
Moreover,
\begin{equation}\label{eq:g-exp-upper}
 E_L(t)\le \exp\left(\int_{[a,t)}L\,d\mu_g\right).
\end{equation}
\end{lemma}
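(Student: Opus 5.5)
The proof has three parts: convergence and finiteness of the product, the integral identity \eqref{eq:g-exp-integral}, and the upper bound \eqref{eq:g-exp-upper}.

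\emph{Convergence and the upper bound.} Since $L\ge0$, every factor satisfies $1\le 1+L(s)\Delta g(s)\le \exp(L(s)\Delta g(s))$. We also have
\[
 \sum_{s\in D_g}L(s)\Delta g(s)=\int L\,d\Bigl(\sum_{s}\Delta g(s)\delta_s\Bigr)\le\norm{L}_{L_g^1}<\infty.
\]
So the partial products over any enumeration of $D_g\cap[a,t)$ are nondecreasing and bounded by $\exp(\int_{[a,t)}L\,d\mu_g^{\mathrm d})$, where $\mu_g^{\mathrm d}$ is the atomic part. The infinite product therefore converges, independently of order. Multiplying by the continuous exponential and using $\mu_g=\mu_g^{\mathrm c}+\mu_g^{\mathrm d}$ gives \eqref{eq:g-exp-upper} and finiteness at once. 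The bound also shows that $E_L$ is nondecreasing and bounded by $e^{H_*}$, so $LE_L\in L_g^1$ and the integral in \eqref{eq:g-exp-integral} makes sense.

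\emph{The integral identity.} I would prove that the two sides generate the same measure on half-open intervals. Write $A(t)=\exp(\int_{[a,t)}L\,d\mu_g^{\mathrm c})$ and $P(t)=\prod_{s\in D_g\cap[a,t)}(1+L(s)\Delta g(s))$.

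First, $A$ is continuous. Using $\mu_g^{\mathrm c}$ non-atomic and a Riemann-sum (or chain rule for continuous Stieltjes integrators) argument, I would show that $A(t)-A(r)=\int_{[r,t)}LA\,d\mu_g^{\mathrm c}$. For the continuous cumulative function $c(t)=\int_{[a,t)}L\,d\mu_g^{\mathrm c}$, the classical fact $e^{c(t)}-e^{c(r)}=\int_{[r,t)} e^{c}\,dc$ holds because $c$ is continuous and of bounded variation.

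Second, $P$ is a pure-jump nondecreasing function. Its jump at $s\in D_g$ is
\[
 P(s^+)-P(s)=P(s)L(s)\Delta g(s),
\]
and $P$ is constant in the sense that $P(t)-P(r)=\sum_{s\in D_g\cap[r,t)}(P(s^+)-P(s))$. To see this, expand the convergent product: the variation of $P$ is carried by the atoms, since a finite-product approximation has this property and one passes to the limit using the summability above. Hence $dP=PL\,d\mu_g^{\mathrm d}$ as measures, with $P$ evaluated at the anterior (left) value.

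Then apply the product rule for the two BV functions $A$ (continuous) and $P$ (left-continuous pure jump). Because $A$ has no jumps, no posterior correction arises, and $d(AP)=P\,dA+A\,dP$. This can be proved exactly as in \cref{lem:product-measure}, with Fubini on the square: the diagonal contributes nothing because one of the two measures is non-atomic. This gives
\[
 d(AP)=ALP\,d\mu_g^{\mathrm c}+ALP\,d\mu_g^{\mathrm d}=LE_L\,d\mu_g.
\]
Evaluating on $[a,t)$ and using $E_L(a)=1$ yields \eqref{eq:g-exp-integral}.

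\emph{Main obstacle.} I expect the delicate step to be the rigorous description of $P$ as the cumulative function of the measure $PL\,\mu_g^{\mathrm d}$ when $D_g$ is infinite and possibly dense. If that step resists, I would instead truncate to finitely many atoms $D_N$. For the finite case the identity is a direct induction over atoms combined with the continuous exponential. I would then let $N\to\infty$ by dominated convergence, which is justified by the uniform bound $e^{H_*}$ and by $L\in L_g^1$: both sides of the truncated identity converge pointwise to the corresponding sides of \eqref{eq:g-exp-integral}.
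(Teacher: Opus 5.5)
Your proof is correct, and your fallback is essentially the paper's argument. The paper truncates to nested finite sets $D_N\subset D_g$ and obtains $E_N(t)=1+\int_{[a,t)}LE_N\,d\mu_N$ by gluing the chain rule $d(e^{A})=Le^{A}\,d\mu_g^{\mathrm c}$ with the finitely many jumps. It then passes to the limit by monotone convergence, after splitting the integral into its non-atomic part and its atomic sum. Your dominated-convergence version works equally well: write the truncated integral as $\int LE_N\mathbf 1_{[a,b)\setminus(D_g\setminus D_N)}\,d\mu_g$ and dominate by $Le^{H_*}$.

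Your primary route is genuinely different. You factor $E_L=e^{c}P$ and combine the continuous and purely atomic factors through a product rule. This isolates the only infinite-atom difficulty in the discrete identity $P(t)-1=\sum_{s\in D_g\cap[a,t)}P(s)L(s)\Delta g(s)$. You flagged that step as delicate, but it is elementary. Expand $P(t)$ as the sum of $\prod_{s\in S}L(s)\Delta g(s)$ over finite $S\subset D_g\cap[a,t)$, which is legitimate because all terms are nonnegative and summable. Grouping the terms by $\max S$ then gives the identity with no limit at all.

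You also need not redo the Fubini computation. Observe that $e^{c},P\in AC_g$, with $D_g e^{c}=Le^{c}\mathbf 1_{[a,b)\setminus D_g}$ and $D_gP=LP\mathbf 1_{D_g}$ (the indicator of the jump set). Then \cref{cor:product-rule}, together with $S_ge^{c}=e^{c}$, gives $D_g(e^{c}P)=LE_L$ at once.

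The comparison is this. Your route reuses the lateral product rule already proved in the paper and makes the continuous/atomic split transparent. The paper's route needs neither the product rule nor the subset expansion; it handles both parts in a single monotone limit, at the cost of gluing continuous pieces between selected atoms.

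Your convergence and upper-bound argument coincides with the paper's ($1+x\le e^x$ versus $\log(1+x)\le x$). One small correction: monotonicity of $E_L$ follows from $L\ge0$ and all factors being at least one, not from the bound.
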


\begin{revision}
\begin{finalrevision}
The product representation in \eqref{eq:g-exponential} is the nonnegative
Stieltjes exponential; compare the integrable-coefficient construction in
\cite{FerMarToj22}. We include a direct proof under the present derivator and
endpoint conventions, treating explicitly the non-atomic part and the
countable atomic product.
\end{finalrevision}
\end{revision}

\begin{proof}
Since
\[
 \sum_{s\in D_g\cap[a,t)}L(s)\Delta g(s)
 \le\int_{[a,t)}L\,d\mu_g<\infty,
\]
the infinite product converges to a finite positive number. We now justify the integral equation without suppressing the singular continuous part. Set
\[
 A(t):=\int_{[a,t)}L\,d\mu_g^{\mathrm c}.
\]
The function $A$ is continuous and of bounded variation. The chain rule for continuous functions of bounded variation gives
\[
 d(e^A)=e^A\,dA=L e^A\,d\mu_g^{\mathrm c}.
\]
Choose increasing finite subsets $D_N\subset D_g$ whose union is $D_g$, and define
\[
 E_N(t):=e^{A(t)}\prod_{s\in D_N\cap[a,t)}
          \bigl(1+L(s)\Delta g(s)\bigr),
 \qquad
 \mu_N:=\mu_g^{\mathrm c}+\sum_{s\in D_N}\Delta g(s)\delta_s.
\]
Between the finitely many selected atoms, the preceding chain rule yields
$dE_N=L E_N\,d\mu_g^{\mathrm c}$. At a selected atom $s$,
\[
 E_N(s^+)-E_N(s)=L(s)E_N(s)\Delta g(s).
\]
Adding the continuous increments and these finitely many jump increments therefore gives
\[
 E_N(t)=1+\int_{[a,t)}L(s)E_N(s)\,d\mu_N(s).
\]
\begin{revision}
Choose the sets \(D_N\) nested.  Then \(E_N(t)\uparrow E_L(t)\) for every
\(t\), because each newly inserted factor is at least one.  To pass to the limit
without using an informal indicator of a measure, split the integral explicitly:
\begin{align*}
 \int_{[a,t)}LE_N\,d\mu_N
 &=\int_{[a,t)}LE_N\,d\mu_g^{\mathrm c}
   +\sum_{s\in D_N\cap[a,t)}
      L(s)E_N(s)\Delta g(s).
\end{align*}
The first term increases to
\(\int_{[a,t)}LE_L\,d\mu_g^{\mathrm c}\) by the monotone convergence theorem.
For the atomic term, extend the summand by zero to all \(s\in D_g\).  Because
\(D_N\) is nested and \(E_N(s)\) increases, the nonnegative summands increase
pointwise to \(L(s)E_L(s)\Delta g(s)\).  Monotone convergence for counting
measure therefore gives
\[
 \sum_{s\in D_N\cap[a,t)}L(s)E_N(s)\Delta g(s)
 \longrightarrow
 \sum_{s\in D_g\cap[a,t)}L(s)E_L(s)\Delta g(s).
\]
Adding the two limits yields
\end{revision}
\[
 E_L(t)=1+\int_{[a,t)}L(s)E_L(s)\,d\mu_g(s),
\]
which is \eqref{eq:g-exp-integral}.

Finally, $\log(1+x)\le x$ for $x\ge0$. Taking logarithms in \eqref{eq:g-exponential},
\begin{align*}
 \log E_L(t)
 &\le\int_{[a,t)}L\,d\mu_g^{\mathrm c}
 +\sum_{s<t}L(s)\Delta g(s)\\
 &=\int_{[a,t)}L\,d\mu_g,
\end{align*}
which proves \eqref{eq:g-exp-upper}.
\end{proof}

\begin{corollary}[Dependence on the initial datum]\label{cor:data-dependence}
Let $U$ and $V$ be the solutions with initial data $u_0$ and $v_0$. Then, for every $t\in[a,b]$,
\begin{equation}\label{eq:data-dependence-sharp}
 \norm{U(t)-V(t)}
 \le E_L(t)\norm{u_0-v_0}
 \le e^{\int_{[a,t)}L\,d\mu_g}\norm{u_0-v_0}.
\end{equation}
In particular,
\[
 \|U-V\|_{\infty,[a,b]}
 \le e^{\norm{L}_{L_g^1}}\norm{u_0-v_0}.
\]
\end{corollary}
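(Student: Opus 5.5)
The plan is a linear Gronwall comparison driven by the Stieltjes exponential of \cref{lem:g-exponential}. Put \(W:=U-V\). Subtracting the two integral equations \eqref{eq:integral-equation} and applying the Lipschitz bound \eqref{eq:F-Lipschitz} gives, for every \(t\in[a,b]\),
\[
 w(t):=\norm{W(t)}\le \delta+\int_{[a,t)}L(s)\,w(s)\,d\mu_g(s),
 \qquad \delta:=\norm{u_0-v_0}.
\]
Both \(U\) and \(V\) are bounded, so \(w\) is bounded and Borel measurable, and \(Lw\in L_g^1\).

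The goal is \(w\le \delta E_L\). I would not use a differential comparison, which is delicate at atoms. Instead I would iterate the inequality as in Step 4 of \cref{thm:existence-general}. By \eqref{eq:g-exp-integral}, \(\delta E_L\) satisfies the same relation with equality. Hence \(z:=w-\delta E_L\) satisfies
\[
 z(t)\le\int_{[a,t)}L(s)z(s)\,d\mu_g(s).
\]
Positivity of \(L\) lets us substitute the inequality into itself. After \(n\) iterations we get \(z(t)\le \int L(s_1)\cdots L(s_n)\,z(s_n)\) over the strictly ordered simplex \(a\le s_n<\cdots<s_1<t\).

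The remaining term is controlled by the positive part of \(z\). Since \(z^+\le \norm{w}_\infty+\delta E_L(b)=:K\), \cref{lem:ordered-simplex} bounds it by \(K H(t)^n/n!\). This tends to \(0\) as \(n\to\infty\), so \(z\le0\).

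One subtlety has to be handled carefully. When \(z\) takes negative values, substituting the inequality into itself is not monotone. I would therefore run the iteration on \(z^+\) from the start. From \(z(t)\le\int_{[a,t)} Lz\,d\mu_g\le\int_{[a,t)} Lz^+\,d\mu_g\), and since the right-hand side is nonnegative, we get \(z^+(t)\le\int_{[a,t)} Lz^+\,d\mu_g\). This is a valid starting inequality, and the iteration then closes as above.

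The strict ordering of the simplex is what makes the argument correct at atoms. The integrals run over \([a,t)\), so the atom at \(t\) is never counted. This matches the left-sum structure of \(E_L\), where each factor \(1+L(s)\Delta g(s)\) is included only for \(s<t\).

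The second inequality in \eqref{eq:data-dependence-sharp} is exactly \eqref{eq:g-exp-upper}. For the uniform bound, note that \(\int_{[a,t)}L\,d\mu_g\le\norm{L}_{L_g^1}\) for every \(t\in[a,b]\), and take the supremum over \(t\).

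I expect the main obstacle to be justifying the comparison in the presence of atoms: the sign issue just described and the use of half-open intervals. The \(z^+\) device together with \cref{lem:ordered-simplex} should settle it without needing any product rule for \(E_L^{-1}\).
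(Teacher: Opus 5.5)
Your proof is correct and is essentially the paper's argument. Both iterate the positive operator $(Kf)(t)=\int_{[a,t)}Lf\,d\mu_g$, kill the remainder with \cref{lem:ordered-simplex}, and bring in $E_L$ through \eqref{eq:g-exp-integral}. The only difference is that the paper sums the Neumann series $\sum_n K^n\mathbf 1$ and identifies it with $E_L$ as the unique bounded solution of $E=1+KE$, whereas you compare $w$ directly with $\delta E_L$ and so avoid that uniqueness step.

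Your passage to $z^+$ is harmless but not needed. Since $L\ge0$, $K$ is already order-preserving on signed bounded functions, so $z\le K^nz\le K^nz^+$ follows at once.
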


\begin{proof}
Set $w(t)=\norm{U(t)-V(t)}$ and $c=\norm{u_0-v_0}$. The integral equations and the Lipschitz assumption give
\begin{equation}\label{eq:w-gronwall}
 w(t)\le c+\int_{[a,t)}L(s)w(s)\,d\mu_g(s).
\end{equation}
Let $K$ be the positive operator
\[
 (Kf)(t):=\int_{[a,t)}L(s)f(s)\,d\mu_g(s).
\]
Iteration of \eqref{eq:w-gronwall} gives, for every $N\ge1$,
\[
 w\le c\sum_{n=0}^{N-1}K^n\mathbf1+K^Nw.
\]
Since $w$ is bounded, \cref{lem:ordered-simplex} implies
\[
 0\le(K^Nw)(t)
 \le\|w\|_{\infty,[a,b]}\frac{H(t)^N}{N!}\longrightarrow0.
\]
On the other hand, the series $\sum_{n\ge0}K^n\mathbf1$ converges uniformly and is the unique bounded solution of
\[
 E=1+KE.
\]
By \cref{lem:g-exponential}, this solution is $E_L$. Letting $N\to\infty$ gives $w(t)\le cE_L(t)$, and the second inequality in \eqref{eq:data-dependence-sharp} follows from \eqref{eq:g-exp-upper}.
\end{proof}

\subsection{Existence without uniqueness through an invariant ball}

Recall that $U:[a,b]\to\R^d$ is $g$-continuous if, for every $t\in[a,b]$ and every $\varepsilon>0$, there exists $\delta>0$ such that
\[
 |g(s)-g(t)|<\delta\quad\Longrightarrow\quad\norm{U(s)-U(t)}<\varepsilon.
\]
This condition forces $U$ to be constant on plateaus of $g$ and imposes only left continuity at a jump point.
\begin{finalrevision}
We denote by \(BC_g([a,b];\R^d)\) the space of bounded
\(g\)-continuous functions, endowed with the pointwise uniform norm
\[
 \|U\|_{\infty,[a,b]}:=\sup_{t\in[a,b]}\|U(t)\|.
\]
It is a Banach space: a uniform Cauchy sequence has a bounded uniform limit,
and the standard \(\varepsilon/3\) argument preserves \(g\)-continuity.
\end{finalrevision}

\begin{revision}
Every \(g\)-continuous function is Borel measurable. Indeed, it is constant on
each fiber of \(g\), so it factors through the range \(g([a,b])\); the induced
map on that range is continuous for the Euclidean metric, and \(g\) is Borel
measurable. This observation justifies the Carath\'eodory compositions used in
the Schauder operator below.
\end{revision}

\begin{lemma}[Compactness of dominated primitives]\label{lem:compact-dominated-primitives}
Let $h\in L_g^1([a,b))$, $h\ge0$, and let $\mathcal V$ be a family of functions of the form
\[
 V(t)=c+\int_{[a,t)}f\,d\mu_g,
 \qquad \norm{c}\le C,
 \qquad \norm{f(t)}\le h(t)\quad\mu_g\text{-a.e.}
\]
Then $\mathcal V$ is relatively compact in $BC_g([a,b];\R^d)$ equipped with the uniform norm.
\end{lemma}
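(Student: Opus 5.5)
The plan is to prove relative compactness through a $g$-adapted Arzelà--Ascoli argument. Every element of $\mathcal V$ is the composition of a function defined on the compact set $K:=\overline{g([a,b])}$ with $g$, so the family can be transferred to an equicontinuous, uniformly bounded family on $K$. Atoms of $\mu_g$ complicate this transfer, because $V$ jumps at them, and handling those jumps is where most of the work lies.

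\emph{Step 1: membership, uniform boundedness, and control of increments.} Every $V\in\mathcal V$ lies in $AC_g$ by \cref{lem:FTC}. It is $g$-continuous, since for $s<t$ we have
\[
 \norm{V(t)-V(s)}\le\int_{[s,t)}h\,d\mu_g ,
\]
and absolute continuity of the integral $h\mu_g$ with respect to $\mu_g$ makes the right-hand side small whenever $\mu_g([s,t))=g(t)-g(s)$ is small. At a jump point the smallness only holds from the left, which is exactly the left continuity required there. Boundedness is uniform on the family: $\norm{V}_{\infty,[a,b]}\le C+\norm{h}_{L_g^1}$. The increment estimate is also uniform, because it depends only on $h$ and not on the particular $V$.

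\emph{Step 2: reduction to a compact metric setting.} Rather than work on the range of $g$, which may miss the jump intervals, I would use the monotone \emph{parametrization by cumulative measure}. Set $\lambda:=h\mu_g+\mu_g$ and $\theta(t):=\lambda([a,t))$. Then $\theta$ is nondecreasing and left continuous, its jumps occur exactly at the atoms of $\mu_g$, and $\norm{V(t)-V(s)}\le\theta(t)-\theta(s)$ for $s<t$, uniformly over $\mathcal V$. Each $V$ therefore induces a $1$-Lipschitz map $\widetilde V$ on the set $\theta([a,b])$, and $V=\widetilde V\circ\theta$ is well defined because $\theta(s)=\theta(t)$ forces $V(s)=V(t)$.

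\emph{Step 3: the main obstacle, the atoms.} At an atom $t$ the posterior value $V(t^+)$ is not a value of $V$ at any point of $[a,b]$. It corresponds to the right endpoint $\theta(t^+)$ of a gap in $\theta([a,b])$. Since $\widetilde V$ is Lipschitz with constant $1$, it extends uniquely to the closure $\overline{\theta([a,b])}\subset[0,\lambda([a,b))]$, which is compact, and the extension still satisfies $\norm{\widetilde V}\le C+\norm{h}_{L_g^1}$. The classical Arzelà--Ascoli theorem then applies to this equi-Lipschitz, uniformly bounded family on a compact metric space. Every sequence $(V_n)$ has a subsequence with $\widetilde V_{n_k}$ converging uniformly on the closure, and hence $V_{n_k}=\widetilde V_{n_k}\circ\theta$ converges uniformly on $[a,b]$.

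\emph{Step 4: the limit lies in the space.} By Step 2, $BC_g([a,b];\R^d)$ is complete, so the uniform limit belongs to it. One can also check this directly: the limit inherits $\norm{V(t)-V(s)}\le\int_{[s,t)}h\,d\mu_g$, which gives $g$-continuity, and it inherits the bound. This yields relative compactness.

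The delicate point I would verify carefully is Step 3. It requires checking that uniform convergence of the extended functions on $\overline{\theta([a,b])}$ does not interact badly with plateaus of $g$, where $\theta$ may still increase because of $h$. On such a plateau $\mu_g$, and hence $h\mu_g$, vanishes, so $\theta$ is constant there as well and no problem arises.
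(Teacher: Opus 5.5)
Your proof is correct, and it takes a genuinely different route from the paper's. Both arguments rest on the same uniform increment bound $\norm{V(t)-V(s)}\le\int_{[s,t)}h\,d\mu_g$. After that they diverge.

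\textbf{The paper's route.} It works by hand:
\begin{enumerate}
 \item it isolates the finitely many atoms of $\nu=h\mu_g$ with mass at least $\varepsilon$;
 \item it builds a quantile partition for the residual measure, with cells of residual mass below $2\varepsilon$;
 \item it extracts a subsequence converging at the finitely many nodes and at the posterior values $V(d_j^+)$ of the large atoms;
 \item it derives a uniform Cauchy estimate up to $4\varepsilon$;
 \item it finishes with a diagonal extraction over $\varepsilon_k=1/k$ and the completeness of $BC_g$.
\end{enumerate}

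\textbf{Your route.} You reparametrize by the cumulative function $\theta(t)=\lambda([a,t))$. This turns the family into an equi-Lipschitz, uniformly bounded family on the compact set $\overline{\theta([a,b])}\subset[0,\lambda([a,b))]$. A single application of the classical Arzel\`a--Ascoli theorem then replaces the partition, the node bookkeeping and the diagonal argument. The right endpoints $\theta(t^+)$ of the gaps in $\theta([a,b])$ carry the posterior values. So they automatically play the role of the paper's extra nodes $d_j^+$, and you never need to separate large atoms from small ones.

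\textbf{What each buys.} The paper's construction is elementary and shows concretely why large atoms need a second reference value. Yours is shorter and more conceptual, since it reduces the atomic setting to the classical compact-metric case.

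\textbf{Minor remarks.}
\begin{itemize}
 \item Adding $\mu_g$ to $\lambda$ is harmless but unnecessary; with $\lambda=h\mu_g$ the argument is unchanged.
 \item Completeness of $BC_g([a,b];\R^d)$ is not established in your Step~2; it is the observation the paper makes just before the lemma. Your direct check that the limit inherits the increment bound makes that reference superfluous anyway.
 \item The plateau concern at the end is not something your argument ever needs to address.
\end{itemize}
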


\begin{proof}
Introduce the finite measure
\[
 \nu(E):=\int_Eh\,d\mu_g.
\]
The family is uniformly bounded because
\[
 \norm{V(t)}\le C+\nu([a,b))
\]
for all $V\in\mathcal V$ and $t\in[a,b]$. Moreover, for $a\le s<t\le b$,
\begin{equation}\label{eq:compact-increment}
 \norm{V(t)-V(s)}
 \le \nu([s,t)).
\end{equation}
\begin{revision}
This estimate also proves that every \(V\in\mathcal V\) is
\(g\)-continuous, uniformly with respect to the family. Given
\(\varepsilon>0\), absolute continuity of the finite measure
\(\nu=h\mu_g\) with respect to \(\mu_g\) provides \(\delta>0\) such that
\(\mu_g(E)<\delta\) implies \(\nu(E)<\varepsilon\). If \(s<t\) and
\(|g(t)-g(s)|=\mu_g([s,t))<\delta\), then
\eqref{eq:compact-increment} gives \(\|V(t)-V(s)\|<\varepsilon\). The case
\(t<s\) is symmetric.
\end{revision}
This estimate controls left increments. At an atom $d$, it also controls the posterior trace because
\[
 \norm{V(d^+)-V(d)}\le \nu(\{d\}).
\]

Let $(V_n)\subset\mathcal V$ and fix $\varepsilon>0$. Since $\nu$ is finite, the set
\[
 A_\varepsilon:=\{d\in[a,b):\nu(\{d\})\ge\varepsilon\}
\]
is finite. Write $A_\varepsilon=\{d_1,\ldots,d_q\}$.
\newrev{Remove the large atoms from the control measure and set
\[
 \nu_\varepsilon
 :=\nu-\sum_{j=1}^q\nu(\{d_j\})\delta_{d_j}.
\]
Every atom of $\nu_\varepsilon$ has mass strictly smaller than
$\varepsilon$. Apply the standard generalized-inverse (quantile)
construction to the nondecreasing distribution function
$t\mapsto\nu_\varepsilon([a,t])$, using levels separated by
$\varepsilon$. Between two consecutive quantile nodes the increase of the
distribution function is less than $\varepsilon$, except that a level may
cross one atom; that atom contributes less than a further $\varepsilon$.
Hence every resulting half-open cell has $\nu_\varepsilon$-mass less than
$2\varepsilon$. Insert all points $d_j$ as additional partition nodes.
Insertion only subdivides cells, so the same residual-mass estimate remains
valid. We obtain a finite partition}
\[
 a=r_0<r_1<\cdots<r_N=b
\]
\newrev{that contains all $d_j$ and satisfies
\[
 \nu_\varepsilon([r_i,r_{i+1}))<2\varepsilon
 \qquad(0\le i<N).
\]
In particular, if $r_i$ is not a large atom, then
$\nu([r_i,t))<2\varepsilon$ for $r_i\le t<r_{i+1}$; if
$r_i=d_j$ is a large atom and $r_i<t<r_{i+1}$, then
$\nu((d_j,t))<2\varepsilon$. The latter estimate is the one needed after
passing to the posterior value at $d_j$.}

Consider the finite coordinate vector
\[
 \bigl(V_n(r_0),\ldots,V_n(r_N),
       V_n(d_1^+),\ldots,V_n(d_q^+)\bigr).
\]
These vectors form a bounded sequence in a finite-dimensional Euclidean space. By Bolzano--Weierstrass, after extraction, all coordinates converge simultaneously.

Let $t\in[r_i,r_{i+1})$. If $t=r_i$, use $r_i$ as reference. If $t>r_i$ and $r_i$ is not a large atom, again use $r_i$. If $t>r_i$ and $r_i=d_j$ is a large atom, use instead the posterior node $r_i^+=d_j^+$. Denote the chosen reference node by $r_i^\sharp$. In the last case, the interval between $r_i^\sharp$ and $t$ no longer contains the large mass at $d_j$. By \eqref{eq:compact-increment}, for two elements $V_n,V_m$ of the extracted subsequence,
\[
 \norm{V_n(t)-V_m(t)}
 \le \norm{V_n(r_i^\sharp)-V_m(r_i^\sharp)}
     +4\varepsilon.
\]
\newrev{For completeness, in the posterior case this estimate follows by
taking $s\downarrow d_j$ in
$\|V_n(t)-V_n(s)\|\le\nu([s,t))$, which gives
$\|V_n(t)-V_n(d_j^+)\|\le\nu((d_j,t))<2\varepsilon$; the same estimate holds
for $V_m$. In the other cases one uses
$\nu([r_i,t))<2\varepsilon$. The triangle inequality then produces the
displayed factor $4\varepsilon$.}
\begin{revision}
The first term tends uniformly to zero over the finite set of reference nodes.
Thus the extracted subsequence is uniformly Cauchy up to an error
\(4\varepsilon\). For \(k=1,2,\ldots\), repeat the construction with
\(\varepsilon_k=1/k\), each time extracting from the subsequence obtained at
the preceding stage. For every fixed \(k\), all sufficiently late terms of the
diagonal sequence lie in the stage-\(k\) subsequence, so their uniform distance
is bounded by a node error tending to zero plus \(4/k\). Given any tolerance,
choose first \(k\) large and then the two indices late enough. Hence the
diagonal sequence is uniformly Cauchy and, because
\(BC_g([a,b];\R^d)\) is complete, it converges uniformly.
\end{revision}

Every $V_n$ is $g$-continuous. Uniform convergence preserves $g$-continuity: given $t$ and $\eta>0$, choose $n$ with $\norm{V-V_n}_\infty<\eta/3$ and use the $g$-continuity of $V_n$. Thus the limit belongs to $BC_g([a,b];\R^d)$, and $\mathcal V$ is relatively compact in that space.
\end{proof}

The Lipschitz hypothesis is needed for the uniqueness result above, but not for the weak--integral equivalence. The next statement is a secondary consequence of the weak characterization and the preceding compactness lemma.

\begin{theorem}[Carath\'eodory existence by Schauder]\label{thm:schauder-existence}
Assume that $F:[a,b]\times\R^d\to\R^d$ satisfies \textup{(H1)--(H2)}. Suppose, in addition, that there exist $R>0$ and a nonnegative function $h_R\in L_g^1([a,b))$ such that
\begin{equation}\label{eq:schauder-bound}
 \norm{F(t,x)}\le h_R(t)
 \quad\text{for }\mu_g\text{-a.e. }t
 \text{ and }\norm{x}\le R,
\end{equation}
and
\begin{equation}\label{eq:schauder-ball}
 \norm{u_0}+\norm{h_R}_{L_g^1}\le R.
\end{equation}
Then \eqref{eq:integral-equation} has at least one solution $U\in AC_g([a,b];\R^d)$ satisfying $\|U\|_{\infty,[a,b]}\le R$. Consequently, there exists at least one global weak solution.
\end{theorem}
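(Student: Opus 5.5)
We apply the Schauder fixed-point theorem in the Banach space \(BC_g([a,b];\R^d)\), with the uniform norm, on the closed convex set
\[
 K:=\{U\in BC_g([a,b];\R^d):\|U\|_{\infty,[a,b]}\le R\}.
\]
The point of truncating to the ball is that only the local bound \eqref{eq:schauder-bound} is available; no global Lipschitz or growth condition is assumed.

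We define
\[
 (TU)(t):=u_0+\int_{[a,t)}F(s,U(s))\,d\mu_g(s),\qquad U\in K.
\]
This is well defined for the following reasons. Every \(g\)-continuous \(U\) is Borel measurable by the remark preceding \cref{lem:compact-dominated-primitives}, so the Carath\'eodory composition \(F(\cdot,U(\cdot))\) is \(\overline{\mu}_g\)-measurable. Since \(\|U(t)\|\le R\), \eqref{eq:schauder-bound} gives \(\|F(t,U(t))\|\le h_R(t)\) almost everywhere, hence \(F(\cdot,U)\in L_g^1\).

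Next we check that \(T\) maps \(K\) into a relatively compact subset of \(K\).
\begin{itemize}
 \item By \eqref{eq:schauder-ball}, \(\|TU(t)\|\le\|u_0\|+\|h_R\|_{L_g^1}\le R\).
 \item \(TU\) is \(g\)-continuous, because \(\|TU(t)-TU(s)\|\le\int_{[s,t)}h_R\,d\mu_g\) and \(h_R\mu_g\ll\mu_g\), exactly as in the proof of \cref{lem:compact-dominated-primitives}.
 \item \(T(K)\) lies in the family \(\mathcal V\) of \cref{lem:compact-dominated-primitives}, with \(c=u_0\), \(C=\|u_0\|\) and \(h=h_R\). That lemma therefore gives relative compactness of \(T(K)\) in \(BC_g\).
\end{itemize}

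The remaining step is continuity of \(T\) on \(K\) for the uniform norm; this is where care is needed. Let \(U_n\to U\) uniformly in \(K\). By (H2), for \(\mu_g\)-a.e.\ \(t\) we have \(F(t,U_n(t))\to F(t,U(t))\). The null set of (H2) and the null set of \eqref{eq:schauder-bound} are fixed once and for all, independently of \(n\), so this convergence holds off a single null set. The integrands are dominated by \(2h_R\), so dominated convergence yields \(F(\cdot,U_n)\to F(\cdot,U)\) in \(L_g^1\). Then
\[
 \|TU_n-TU\|_{\infty,[a,b]}\le\|F(\cdot,U_n)-F(\cdot,U)\|_{L_g^1}\to0.
\]

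Schauder's theorem, in the form where a continuous map of a closed convex set into a relatively compact subset of itself has a fixed point, now provides \(U\in K\) with \(U=TU\). Thus \(U\) satisfies \eqref{eq:integral-equation} and \(\|U\|_{\infty,[a,b]}\le R\). By \cref{lem:FTC}, \(U\in AC_g\) with \(D_gU=F(\cdot,U)\).

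Finally, \cref{thm:nonlinear-equivalence} shows that the state \(([U],u_0,U(b))\) is a global weak solution. Its hypotheses are satisfied: they are stated under the Carath\'eodory assumptions, but the proof only uses \(F(\cdot,u)\in L_g^1\) and the composition-measurability argument, both of which hold here. Alternatively, one can apply lateral integration by parts (\cref{thm:ibp}) componentwise directly to \(U\).
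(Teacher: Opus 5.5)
Your proof is correct and follows the paper's argument essentially step for step. It uses the same ball in $BC_g([a,b];\R^d)$, the same integral operator, invariance via \eqref{eq:schauder-ball}, continuity by dominated convergence with the bound $2h_R$, relative compactness from the dominated-primitives lemma, and Schauder's theorem. Your extra remarks are correct refinements of points the paper leaves implicit: the explicit $g$-continuity of $TU$, the single exceptional null set, and the observation that passing to the weak formulation needs only $F(\cdot,U)\in L_g^1$ and not the Lipschitz hypothesis \textup{(H3)}.
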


\begin{proof}
Let
\[
 B_R:=\{U\in BC_g([a,b];\R^d):\|U\|_{\infty,[a,b]}\le R\}.
\]
This is a nonempty closed convex subset of the Banach space $BC_g([a,b];\R^d)$. Define
\[
 (\mathcal PU)(t):=u_0+\int_{[a,t)}F(s,U(s))\,d\mu_g(s).
\]
If $U\in B_R$, the Carath\'eodory composition is measurable and \eqref{eq:schauder-bound} bounds it by $h_R$. Hence $\mathcal PU\in AC_g$ and
\[
 \norm{(\mathcal PU)(t)}
 \le \norm{u_0}+\int_{[a,b)}h_R\,d\mu_g
 \le R.
\]
Thus $\mathcal P(B_R)\subset B_R$.

We next prove continuity in the same uniform norm. If $U_n\to U$ uniformly in $B_R$, then for $\mu_g$-almost every $t$, continuity of $F(t,\cdot)$ gives
\[
 F(t,U_n(t))\longrightarrow F(t,U(t)).
\]
The difference is dominated by $2h_R$. The dominated convergence theorem implies
\[
 \norm{F(\cdot,U_n)-F(\cdot,U)}_{L_g^1}\longrightarrow0,
\]
and therefore
\[
 \|\mathcal PU_n-\mathcal PU\|_{\infty,[a,b]}
 \le \norm{F(\cdot,U_n)-F(\cdot,U)}_{L_g^1}\longrightarrow0.
\]

For $U\in B_R$, the function $\mathcal PU$ is a primitive with fixed initial term $u_0$ and density dominated by $h_R$. By \cref{lem:compact-dominated-primitives}, $\mathcal P(B_R)$ is relatively compact in the same Banach space $BC_g([a,b];\R^d)$. All hypotheses of Schauder's fixed point theorem are therefore satisfied. Hence there exists $U\in B_R$ such that $\mathcal PU=U$. This equality is exactly \eqref{eq:integral-equation}; the weak formulation follows from \cref{thm:nonlinear-equivalence}.
\end{proof}

\subsection{Reading the equation at atoms}

\begin{proposition}[Intrinsic reset law]\label{prop:atomic-reset-general}
Let $U$ be a solution of \eqref{eq:nonlinear-general}. For every $t\in D_g$,
\begin{equation}\label{eq:general-reset}
 U(t^+)=U(t)+\Delta g(t)F(t,U(t)).
\end{equation}
\end{proposition}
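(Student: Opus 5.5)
The reset law is a direct consequence of the integral representation of the solution combined with the atomic jump formula \eqref{eq:jump-strong}. Here ``solution'' means an integral solution in the sense of \cref{def:integral-solution}. This covers both strong and weak solutions, since by \cref{thm:nonlinear-equivalence} any weak solution has such a representative. So the plan is to start from \eqref{eq:integral-equation} and compare the values of $U$ at $t$ and at $t^+$.

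\textbf{Step 1: identify the density.} Since $U\in AC_g([a,b];\R^d)$ satisfies \eqref{eq:integral-equation}, \cref{lem:FTC} (applied componentwise) gives $D_gU=F(\cdot,U(\cdot))$ $\mu_g$-almost everywhere. The density $F(\cdot,U(\cdot))$ lies in $L_g^1$ because $U$ is bounded, by \eqref{eq:bounded-AC}.

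\textbf{Step 2: subtract the representations.} Fix $t\in D_g$. By \cref{prop:Sg-properties}\,(i), the right limit $U(t^+)$ exists. Continuity from above of the finite vector measure $F(\cdot,U)\mu_g$ turns the representation at $r\downarrow t$ into an integral over $[a,t]$. Subtracting the representation at $t$ leaves only the integral over $\{t\}$:
\[
 U(t^+)-U(t)=\int_{\{t\}}F(s,U(s))\,d\mu_g(s)=\Delta g(t)\,F(t,U(t)).
\]
This is exactly the computation leading to \eqref{eq:jump-strong} and \eqref{eq:Sg-formula}, now with the density written as $F(\cdot,U)$.

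\textbf{Step 3: the only subtle point.} The identity in Step 2 uses the value of the density at the single point $t$, whereas Step 1 gives $D_gU=F(\cdot,U)$ only almost everywhere. This is harmless: since $\mu_g(\{t\})=\Delta g(t)>0$, equality $\mu_g$-a.e.\ forces equality at $t$. Moreover, no representative choice enters, because the integral over $\{t\}$ in \eqref{eq:integral-equation} is computed directly from $F(t,U(t))$, with $U(t)$ the actual anterior value of the $g$-absolutely continuous representative.

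Two remarks complete the argument. By (H1)--(H3) the composition is well defined on the completed measure space. At an atom the relevant value of $F(t,\cdot)$ is the one attached to the point $t$ itself; should (H2) fail on a null set, that set cannot contain an atom. Rearranging the displayed identity yields \eqref{eq:general-reset}.
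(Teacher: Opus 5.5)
Your proof is correct and is essentially the paper's own argument: subtract the integral equation at $t$ from its right limit to isolate $\int_{\{t\}}F(s,U(s))\,d\mu_g(s)$, then evaluate that integral as $\Delta g(t)F(t,U(t))$, since the atom carries positive mass and so the value at $t$ is fixed by the $L_g^1$ class. Your Step~1 and the remarks on (H2) are harmless additions; one small point is that integrability of $F(\cdot,U)$ is already part of being an integral solution, so you need not derive it from the boundedness of $U$, which also relies on the linear growth in (H3).
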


\begin{proof}
The integral equation gives
\[
 U(t^+)-U(t)
 =\int_{\{t\}}F(s,U(s))\,d\mu_g(s).
\]
Since $\mu_g(\{t\})=\Delta g(t)$ and the value of $F(\cdot,U)$ at an atom is determined by its $L_g^1$ class,
\[
 \int_{\{t\}}F(s,U(s))\,d\mu_g(s)
 =\Delta g(t)F(t,U(t)).
\]
This proves \eqref{eq:general-reset}.
\end{proof}

\section{First application: nonlinear dynamics with one instantaneous reset}\label{sec:cubic}

\subsection{A single jump}

Fix $t_0\in(0,T)$ and $\alpha>0$, and consider
\begin{equation}\label{eq:g-one-jump}
 g(t)=
 \begin{cases}
 t,&0\le t\le t_0,\\
 t+\alpha,&t_0<t\le T.
 \end{cases}
\end{equation}
Then
\[
 \mug=dt+\alpha\delta_{t_0}.
\]

\begin{proposition}[Continuous evolution and reset decomposition]\label{prop:one-jump-decomposition}
Let $F:[0,T]\times\R\to\R$ be continuous and locally Lipschitz in its second variable. A function $U\in\ACg([0,T])$ \newrev{with $U(0)=u_0$} is an integral solution of
\begin{equation}\label{eq:one-jump-equation}
 D_gU=F(t,U),\qquad U(0)=u_0,
\end{equation}
if and only if:
\begin{enumerate}[label=\roman*)]
 \item $U'=F(t,U)$ on $(0,t_0)$ and on $(t_0,T)$;
 \item $U$ is left-continuous at $t_0$;
 \item the reset law
 \begin{equation}\label{eq:reset-general}
 U(t_0^+)=U(t_0)+\alpha F(t_0,U(t_0))
 \end{equation}
 holds.
\end{enumerate}
\end{proposition}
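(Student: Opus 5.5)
We prove the two implications separately, using the explicit form $\mu_g=dt+\alpha\delta_{t_0}$ to split the integral equation \eqref{eq:integral-equation} into a Lebesgue part and a single atomic contribution. Throughout we interpret condition i) in the Carath\'eodory sense: $U$ is locally absolutely continuous on each open piece and satisfies the ODE there; since $U$ will be continuous and $F$ continuous, this coincides with classical $C^1$ solutions. For $t\le t_0$ the interval $[0,t)$ contains no atom. For $t>t_0$ it contains exactly the atom $t_0$, so
\[
 U(t)=u_0+\int_0^tF(s,U(s))\,ds+\alpha F(t_0,U(t_0)),\qquad t>t_0 .
\]
This decomposition is the backbone of both directions.

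\emph{Necessity.} Assume $U$ is an integral solution. On $[0,t_0]$ the representation is a classical Volterra integral equation with continuous integrand. Here $U$ is bounded by \eqref{eq:bounded-AC}, and we must check that $s\mapsto F(s,U(s))$ is continuous on each open piece. This holds because $U$ is itself a Lebesgue primitive there, hence continuous, and $F$ is continuous. The classical fundamental theorem then gives $U'=F(t,U)$ on $(0,t_0)$. The same argument on $(t_0,T)$, where the atomic term is a constant, gives the ODE there. Left-continuity at $t_0$ is part of the general left-continuity of $AC_g$ functions noted in the proof of \cref{prop:Sg-properties}. The reset law \eqref{eq:reset-general} is \cref{prop:atomic-reset-general} with $\Delta g(t_0)=\alpha$.

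\emph{Sufficiency.} Assume i)--iii) and $U(0)=u_0$, with $U\in AC_g$. On $[0,t_0)$, $U$ solves the ODE, and since $U\in AC_g$ with $\mu_g=dt$ there, $U$ is absolutely continuous on each compact subinterval. Integrating gives $U(t)=u_0+\int_0^tF(s,U)\,ds$ for $t<t_0$, and left-continuity ii) extends this to $t=t_0$. On $(t_0,T]$, the ODE together with the existence of $U(t_0^+)$ gives
\[
 U(t)=U(t_0^+)+\int_{t_0}^tF(s,U)\,ds .
\]
Here we pass to the limit at the left endpoint: $U$ is bounded, so $F(\cdot,U)$ is bounded near $t_0$. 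Substituting \eqref{eq:reset-general} and the formula at $t_0$ reproduces the displayed decomposition, which is exactly \eqref{eq:integral-equation}.

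\emph{Main obstacle.} The hardest step is the regularity bookkeeping near $t_0$ and near the endpoints. We must ensure that the composition $F(\cdot,U(\cdot))$ is continuous and bounded up to $t_0$ from each side, so that integrating the ODE recovers the integral equation up to the left and right limits. We plan to handle this with the boundedness estimate \eqref{eq:bounded-AC} and the existence of one-sided limits of $AC_g$ functions. Local Lipschitz continuity is not needed for the equivalence itself; it only guarantees that i)--iii) determine $U$ uniquely.
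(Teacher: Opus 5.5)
Your proposal is correct and follows the paper's proof essentially step for step. Like the paper, you split $\mu_g=dt+\alpha\delta_{t_0}$ into branch integrals plus one atomic term, apply the classical fundamental theorem on each branch for necessity, and for sufficiency integrate over $(t_0,t)$ from the posterior trace and then substitute the reset law. Your extra bookkeeping (the limit $s\downarrow t_0$, the observation that ii) is automatic for $U\in AC_g$, and that local Lipschitz continuity is not used) only makes explicit what the paper leaves implicit.
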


\begin{proof}
If $U$ is an integral solution, then
\begin{equation}\label{eq:one-jump-integral}
 U(t)=u_0+\int_0^tF(s,U(s))\,ds
 +\alpha\ind_{(t_0,T]}(t)F(t_0,U(t_0)).
\end{equation}
On each interval not containing $t_0$, the atomic term is constant. Classical absolute continuity of the integral implies $U'=F(t,U)$ almost everywhere, and since $F(t,U(t))$ is continuous on each branch, the equality holds pointwise there. The half-open representation $[0,t)$ guarantees left continuity. Subtracting the value at $t_0$ from the right limit in \eqref{eq:one-jump-integral} gives \eqref{eq:reset-general}.

Conversely, assume i)--iii). On $[0,t_0]$, the classical fundamental theorem gives
\[
 U(t)=u_0+\int_0^tF(s,U(s))\,ds.
\]
For $t>t_0$, integrate over $(t_0,t)$ and use the posterior datum \eqref{eq:reset-general}:
\begin{align*}
 U(t)
 &=U(t_0^+)+\int_{t_0}^tF(s,U(s))\,ds\\
 &=u_0+\int_0^tF(s,U(s))\,ds
 +\alpha F(t_0,U(t_0)).
\end{align*}
This is exactly \eqref{eq:one-jump-integral}, namely the integral equation with respect to $\mug$.
\end{proof}

\subsection{A dissipative cubic model}

Although the general motivation includes growth models, the following explicit example describes dissipative dynamics with an instantaneous cubic correction. We consider
\begin{equation}\label{eq:cubic-model}
 D_gU=-U^3,
 \qquad U(0)=u_0.
\end{equation}
The classical ordinary differential equation
\begin{equation}\label{eq:cubic-classical}
 x'=-x^3,
 \qquad x(0)=\xi,
\end{equation}
has the global solution
\begin{equation}\label{eq:cubic-flow}
 \Phi_s(\xi)=\frac{\xi}{\sqrt{1+2s\xi^2}},\qquad s\ge0.
\end{equation}
For $\xi=0$, this is the zero solution. If $\xi\ne0$, separation of variables gives
\[
 \frac{d}{dt}(x^{-2})=-2x^{-3}x'=2,
\]
so $x(t)^{-2}=\xi^{-2}+2t$, which yields \eqref{eq:cubic-flow} with the sign of $\xi$ preserved.

The value immediately before the jump is
\begin{equation}\label{eq:xminus}
 x_-:=U(t_0)=\Phi_{t_0}(u_0)
 =\frac{u_0}{\sqrt{1+2t_0u_0^2}}.
\end{equation}
The atomic law gives
\begin{equation}\label{eq:cubic-reset-law}
 x_+:=U(t_0^+)=x_--\alpha x_-^3
 =x_-(1-\alpha x_-^2).
\end{equation}

\begin{theorem}[Explicit solution]\label{thm:cubic-explicit}
The unique integral and weak solution of \eqref{eq:cubic-model} is
\begin{equation}\label{eq:cubic-explicit}
 U(t)=
 \begin{cases}
 \displaystyle\frac{u_0}{\sqrt{1+2tu_0^2}},&0\le t\le t_0,\\[3mm]
 \displaystyle\frac{x_+}{\sqrt{1+2(t-t_0)x_+^2}},&t_0<t\le T,
 \end{cases}
\end{equation}
where $x_+$ is given by \eqref{eq:cubic-reset-law}.
\end{theorem}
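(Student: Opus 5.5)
The plan is to verify directly that the displayed function satisfies the integral equation, and to obtain uniqueness and the weak statement from \cref{prop:one-jump-decomposition} and \cref{thm:nonlinear-equivalence}.

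\emph{Step 1: reduction.} The nonlinearity $F(t,x)=-x^3$ is continuous and locally Lipschitz, but not globally Lipschitz. Hence \cref{thm:existence-general} cannot be quoted directly. I will instead use \cref{prop:one-jump-decomposition}. A function $U$ is an integral solution exactly when it solves $x'=-x^3$ on $(0,t_0)$ and on $(t_0,T)$, is left-continuous at $t_0$, and satisfies the reset law \eqref{eq:reset-general} with $F(t_0,x)=-x^3$, i.e.\ \eqref{eq:cubic-reset-law}.

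\emph{Step 2: verification.} On $[0,t_0]$ the formula is $\Phi_t(u_0)$. It solves \eqref{eq:cubic-classical} by the separation-of-variables computation above, and its value at $t_0$ is $x_-$ from \eqref{eq:xminus}. Since the branch $[0,t_0]$ is closed on the right, left continuity at $t_0$ is automatic. On $(t_0,T]$ the formula is $\Phi_{t-t_0}(x_+)$. It solves the ODE and has right limit $x_+$ at $t_0$, so the reset law holds by the definition of $x_+$. The flow \eqref{eq:cubic-flow} is global for $s\ge0$ with every real initial value, including a possibly negative or sign-changed $x_+$ when $\alpha x_-^2>1$. Hence both branches are defined on the whole interval and the displayed $U$ is well defined.

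\emph{Step 3: uniqueness.} This is the main obstacle, since there is no global Lipschitz constant. Given any integral solution $V$, \cref{prop:one-jump-decomposition} says that $V$ solves the classical ODE on $(0,t_0)$. There $V$ is continuous with $V(0)=u_0$, because $\mu_g$ has no atom at $0$ and is Lebesgue near $0$. Uniqueness for locally Lipschitz ODEs then gives $V=\Phi_t(u_0)$ on $[0,t_0)$. Left continuity extends this to $V(t_0)=x_-$. The reset law forces $V(t_0^+)=x_+$. On $(t_0,T]$, $V$ solves the ODE and is continuous with right limit $x_+$ at $t_0$, so local uniqueness again gives $V=\Phi_{t-t_0}(x_+)$.

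An alternative is a truncation argument: every solution is bounded by \eqref{eq:bounded-AC}, so one can replace $F$ by a globally Lipschitz truncation on a large ball and apply \cref{thm:existence-general}. I would use this only if the local-uniqueness step needs extra justification.

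\emph{Step 4: weak solution.} By \cref{thm:nonlinear-equivalence}, $U$ is a global weak solution with final trace $U(b)$. Conversely, any weak solution with initial trace $u_0$ has a strong representative that is an integral solution, and Step 3 identifies it with $U$.
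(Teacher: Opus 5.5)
Your proposal is correct and takes essentially the same route as the paper. Like the paper, you identify the first branch as $\Phi_t(u_0)$, get $x_+$ from the atomic law, restart the flow at $t_0$, derive uniqueness from classical uniqueness on each branch together with the reset, and pass to the weak formulation through \cref{thm:nonlinear-equivalence}. You also point out that $-x^3$ is not globally Lipschitz, so \cref{thm:existence-general} does not apply directly; the paper leaves this implicit. Your pairwise truncation fallback is valid, since any two solutions are bounded and can be placed in a common ball.
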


\begin{proof}
The first branch is $\Phi_t(u_0)$ and reaches $x_-$ at $t_0$. At the atom, \eqref{eq:Dg-atom} transforms the equation into
\[
 \frac{U(t_0^+)-U(t_0)}{\alpha}=-U(t_0)^3,
\]
which is equivalent to \eqref{eq:cubic-reset-law}. On the second branch, the equation is again $x'=-x^3$, now with initial value $x_+$ at time $t_0$. The flow property therefore gives $\Phi_{t-t_0}(x_+)$. Both explicit branches are defined for all times, so the solution is global. Uniqueness follows from classical uniqueness before and after the jump together with the atomic law, which uniquely determines $x_+$. The weak equivalence follows from \cref{thm:nonlinear-equivalence}.
\end{proof}

\subsection{Qualitative properties of the reset}

\begin{proposition}[Sign and amplitude]\label{prop:qualitative}
Assume $x_-\ne0$ and set $q:=\alpha x_-^2$. Then:
\begin{enumerate}[label=\roman*)]
 \item if $0<q<1$, $x_+$ preserves the sign of $x_-$ and $|x_+|<|x_-|$;
 \item if $q=1$, $x_+=0$ and $U(t)=0$ for $t>t_0$;
 \item if $1<q<2$, $x_+$ changes sign and $|x_+|<|x_-|$;
 \item if $q=2$, $x_+=-x_-$;
 \item if $q>2$, $x_+$ changes sign and $|x_+|>|x_-|$.
\end{enumerate}
\end{proposition}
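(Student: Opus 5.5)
The plan is to reduce all five cases to elementary algebra on the reset law \eqref{eq:cubic-reset-law}. Writing $x_+=x_-(1-q)$ with $q=\alpha x_-^2>0$, we get
\[
 \frac{x_+}{x_-}=1-q,\qquad \abs{x_+}=\abs{1-q}\,\abs{x_-}.
\]
So the sign behaviour is governed by the sign of $1-q$, and the amplitude behaviour by comparing $\abs{1-q}$ with $1$. Since $x_-\ne0$ is assumed, dividing by $x_-$ is legitimate.

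The cases then follow directly from this formula.
\begin{enumerate}[label=\roman*)]
 \item If $0<q<1$, then $1-q\in(0,1)$. Hence $x_+$ has the sign of $x_-$ and $\abs{x_+}<\abs{x_-}$.
 \item If $q=1$, then $x_+=0$.
 \item If $1<q<2$, then $1-q\in(-1,0)$. Hence $x_+$ has the opposite sign and smaller modulus.
 \item If $q=2$, then $1-q=-1$, so $x_+=-x_-$.
 \item If $q>2$, then $1-q<-1$. Hence the sign changes and $\abs{x_+}>\abs{x_-}$.
\end{enumerate}

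The only non-algebraic step is the conclusion in ii) that $U(t)=0$ for $t>t_0$. Here I would use \cref{thm:cubic-explicit}: the second branch of \eqref{eq:cubic-explicit} is $\Phi_{t-t_0}(x_+)$, which is identically zero when $x_+=0$ by \eqref{eq:cubic-flow}.

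Alternatively, one can argue from the integral equation. For $t>t_0$, $U(t)=U(t_0^+)+\int_{t_0}^t(-U^3)\,ds$. The constant zero function solves this with datum $x_+=0$, and classical uniqueness for $x'=-x^3$ on $(t_0,T]$ selects it.

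No step is a genuine obstacle. The only point needing care is that the post-jump branch is determined by the posterior value $x_+=U(t_0^+)$ and not by the anterior value $U(t_0)=x_-$. This is exactly the orientation fixed by \cref{prop:one-jump-decomposition} and the atomic formula \eqref{eq:Dg-atom}.
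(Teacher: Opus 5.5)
Your proposal is correct and follows the paper's proof: both reduce everything to $x_+/x_-=1-q$ and $\abs{x_+}/\abs{x_-}=\abs{1-q}$ from \eqref{eq:cubic-reset-law} and then read off the five ranges of $q$. Your explicit justification that $U(t)=0$ for $t>t_0$ in case ii), via the second branch $\Phi_{t-t_0}(x_+)$ of \eqref{eq:cubic-explicit} with $x_+=0$, fills in a detail the paper leaves implicit.
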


\begin{proof}
By \eqref{eq:cubic-reset-law},
\[
 \frac{x_+}{x_-}=1-q,
 \qquad
 \frac{|x_+|}{|x_-|}=|1-q|.
\]
The five statements correspond exactly to the indicated ranges of $q$.
\end{proof}

\begin{remark}
If $-U^3$ models an instantaneous removal that should preserve the sign and should not exceed the available state, the natural condition is $0\le\alpha U(t_0)^2\le1$. This is a modeling restriction, not a condition for mathematical existence.
\end{remark}

\subsection{Dependence on the initial datum}

\begin{proposition}[Local Lipschitz dependence]\label{prop:continuous-dependence}
For every $R>0$, the map
\[
 u_0\longmapsto U(\cdot;u_0)
\]
\begin{revision}
is Lipschitz from \([-R,R]\) into
\(C([0,t_0])\times C([t_0,T])\), where the second component is the posterior
extension
\[
 U_+(t;u_0):=
 \begin{cases}
  U(t_0^+;u_0),&t=t_0,\\
  U(t;u_0),&t>t_0.
 \end{cases}
\]
The use of the closed interval \([t_0,T]\) makes the target a Banach space with
the uniform norm and removes the ambiguity of writing \(C((t_0,T])\) without a
specified behavior at the missing endpoint.
\end{revision}
\end{proposition}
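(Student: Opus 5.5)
The argument decomposes the solution map into three Lipschitz pieces: the pre-jump flow, the reset, and the post-jump flow. Each piece is explicit by \cref{thm:cubic-explicit}, and compositions of Lipschitz maps between bounded sets are Lipschitz.

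\emph{Pre-jump flow.} On $[0,t_0]$ the solution is $U(t;u_0)=\Phi_t(u_0)=u_0(1+2tu_0^2)^{-1/2}$. For $s\ge0$ we have
\[
 \partial_\xi\Phi_s(\xi)=(1+2s\xi^2)^{-3/2}\in(0,1],
\]
so the mean value theorem gives $|\Phi_s(\xi)-\Phi_s(\eta)|\le|\xi-\eta|$ uniformly in $s\ge0$. Hence
\[
 \|U(\cdot;u_0)-U(\cdot;v_0)\|_{C([0,t_0])}\le|u_0-v_0|.
\]
Continuity of each branch in $t$ is clear, so this component takes values in $C([0,t_0])$. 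The same bound yields $|x_-(u_0)-x_-(v_0)|\le|u_0-v_0|$, and also $|x_-|\le|u_0|\le R$.

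\emph{Reset.} The reset map is $r(x)=x-\alpha x^3$. On $[-R,R]$ it is Lipschitz with constant $1+3\alpha R^2$. Therefore
\[
 |x_+(u_0)-x_+(v_0)|\le(1+3\alpha R^2)|u_0-v_0|.
\]
In addition, $|x_+|\le R+\alpha R^3$.

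\emph{Post-jump flow.} The posterior extension is $U_+(t)=\Phi_{t-t_0}(x_+)$ for $t\in[t_0,T]$. At $t=t_0$ it equals $\Phi_0(x_+)=x_+=U(t_0^+)$, so it is continuous on the closed interval and lies in $C([t_0,T])$. Applying the uniform $1$-Lipschitz bound for $\Phi_s$ once more gives
\[
 \|U_+(\cdot;u_0)-U_+(\cdot;v_0)\|_{C([t_0,T])}\le(1+3\alpha R^2)|u_0-v_0|.
\]

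Adding the two component estimates, the map is Lipschitz with constant $2+3\alpha R^2$ for the product (sum) norm. The only point needing care is that the reset can enlarge amplitudes (cf.\ \cref{prop:qualitative}). This is harmless here, because the flow bound holds on all of $\R$ and no smallness of $x_+$ is required.
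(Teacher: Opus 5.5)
Your proof is correct and matches the paper's argument essentially step for step. Both use the uniform bound $|\partial_\xi\Phi_s|\le 1$, the Lipschitz constant $1+3\alpha R^2$ of the reset on $[-R,R]$ (justified by $|\Phi_s(\xi)|\le|\xi|$), and composition of the pre-jump flow, the reset, and the post-jump flow; your explicit check that the posterior extension is continuous at $t_0$ and the combined constant $2+3\alpha R^2$ are harmless additions the paper leaves implicit.
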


\begin{proof}
The derivative of $\Phi_s$ with respect to $\xi$ is
\[
 \partial_\xi\Phi_s(\xi)=(1+2s\xi^2)^{-3/2},
\]
so $|\partial_\xi\Phi_s|\le1$. The reset map $R_\alpha(x)=x-\alpha x^3$ satisfies
\[
 |R_\alpha'(x)|=|1-3\alpha x^2|\le1+3\alpha R^2
\]
on $[-R,R]$. The solution can be written as
\[
 U(t;u_0)=
 \begin{cases}
 \Phi_t(u_0),&t\le t_0,\\
 \Phi_{t-t_0}\bigl(R_\alpha(\Phi_{t_0}(u_0))\bigr),&t>t_0.
 \end{cases}
\]
\begin{revision}
Since \(|\Phi_s(\xi)|\le|\xi|\), every argument passed to \(R_\alpha\)
remains in \([-R,R]\).  Therefore, for \(u_0,v_0\in[-R,R]\),
\begin{align*}
 \sup_{0\le t\le t_0}|U(t;u_0)-U(t;v_0)|
 &\le |u_0-v_0|,\\
 \sup_{t_0\le t\le T}|U_+(t;u_0)-U_+(t;v_0)|
 &\le (1+3\alpha R^2)|u_0-v_0|.
\end{align*}
Indeed, the first bound follows from the Lipschitz constant of \(\Phi_t\), and
the second follows by composing \(\Phi_{t_0}\), \(R_\alpha\), and
\(\Phi_{t-t_0}\), whose respective Lipschitz constants on the relevant sets are
\(1\), \(1+3\alpha R^2\), and \(1\).  These two estimates prove the stated
Lipschitz property in the product uniform norm.
\end{revision}
\end{proof}

\section{Second application: logistic growth with a nonlinear reset}\label{sec:logistic}

The previous application uses the same nonlinearity during continuous evolution and at the atom. In many models, however, ordinary dynamics describe growth whereas the atom represents a different intervention. The present formalism encodes both mechanisms in one function $F$, because the value of $F$ at an atom has positive $\mu_g$-mass.

Fix $r,K,\alpha,\beta>0$ and the one-jump derivator \eqref{eq:g-one-jump}. Let
\[
 R_\beta(x):=\frac{x}{1+\beta x},\qquad x\ge0,
\]
be a saturating reset that reduces every positive state while preserving positivity. On the invariant half-line $x\ge0$, define
\begin{equation}\label{eq:logistic-F}
 F(t,x):=
 \begin{cases}
 r x\left(1-\dfrac{x}{K}\right),&t\ne t_0,\\[2mm]
 \dfrac{R_\beta(x)-x}{\alpha},&t=t_0.
 \end{cases}
\end{equation}
Then $D_gU=F(t,U)$ generates the logistic ordinary differential equation away from the jump and, at the atom,
\[
 U(t_0^+)=U(t_0)+\alpha F(t_0,U(t_0))=R_\beta(U(t_0)).
\]

The solution of the classical logistic problem with initial datum $\xi\ge0$ is
\begin{equation}\label{eq:logistic-flow}
 \Lambda_s(\xi)
 :=\frac{K\xi e^{rs}}{K+\xi(e^{rs}-1)},\qquad s\ge0.
\end{equation}
For $\xi=0$, this gives the zero solution. If $\xi>0$, the formula follows by separation of variables or by direct verification that $\Lambda_0(\xi)=\xi$ and
\[
 \frac{d}{ds}\Lambda_s(\xi)
 =r\Lambda_s(\xi)\left(1-\frac{\Lambda_s(\xi)}K\right).
\]

\begin{theorem}[Explicit solution of logistic growth with a reset]\label{thm:logistic-explicit}
For $u_0\ge0$, the problem defined by \eqref{eq:logistic-F} has a unique nonnegative solution,
\begin{equation}\label{eq:logistic-explicit}
 U(t)=
 \begin{cases}
 \Lambda_t(u_0),&0\le t\le t_0,\\[1mm]
 \Lambda_{t-t_0}(x_+),&t_0<t\le T,
 \end{cases}
\end{equation}
where
\begin{equation}\label{eq:logistic-reset-values}
 x_-:=\Lambda_{t_0}(u_0),
 \qquad
 x_+:=R_\beta(x_-)=\frac{x_-}{1+\beta x_-}.
\end{equation}
If $0\le u_0\le K$, then $0\le U(t)\le K$ for all $t$ and $0\le x_+\le x_-\le K$.
\end{theorem}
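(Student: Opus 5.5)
The plan mirrors the proof of \cref{thm:cubic-explicit}: check that the piecewise formula \eqref{eq:logistic-explicit} satisfies the three conditions of \cref{prop:one-jump-decomposition}, and obtain uniqueness from a branch-by-branch argument. A literal application of that proposition needs care, since \eqref{eq:logistic-F} is not continuous in $t$ at $t_0$. However, $\{t_0\}$ has Lebesgue measure zero, so in the integral equation with respect to $\mu_g=dt+\alpha\delta_{t_0}$ the value $F(t_0,\cdot)$ enters only through the atomic term. Repeating the short argument behind \eqref{eq:one-jump-integral} with the continuous logistic field on the Lebesgue part therefore gives the same decomposition: classical evolution on $(0,t_0)$ and $(t_0,T)$, left continuity at $t_0$, and the reset law $U(t_0^+)=U(t_0)+\alpha F(t_0,U(t_0))=R_\beta(U(t_0))$.

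\emph{Existence.} The first branch $\Lambda_t(u_0)$ solves the logistic equation with value $u_0$ at $0$. It is continuous, so $U$ is left-continuous at $t_0$ with $U(t_0)=x_-$. The atomic law yields $x_+=R_\beta(x_-)$. On $(t_0,T]$ the flow property shows that $\Lambda_{t-t_0}(x_+)$ solves the ODE, and its limit as $t\downarrow t_0$ is $x_+$. Reassembling the pieces as in the converse part of \cref{prop:one-jump-decomposition} gives the integral equation \eqref{eq:integral-equation}. \Cref{thm:nonlinear-equivalence} then upgrades this to the weak formulation.

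\emph{Nonnegativity and uniqueness.} For $\xi\ge0$ and $s\ge0$, the denominator $K+\xi(e^{rs}-1)$ is positive, so $\Lambda_s(\xi)\ge0$ is globally defined. Likewise $R_\beta$ maps $[0,\infty)$ into itself. Uniqueness among nonnegative solutions follows in three steps. On $[0,t_0]$, any solution solves the logistic ODE, whose right-hand side is locally Lipschitz, so it coincides with $\Lambda_t(u_0)$ by classical uniqueness; a priori bounds are unnecessary because $\Lambda$ does not blow up forward from nonnegative data. The reset law then determines $x_+$ uniquely. Finally, classical uniqueness on $(t_0,T]$ starting from $x_+$ fixes the second branch. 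The main obstacle is exactly this point: \cref{thm:existence-general} does not apply directly, because the logistic field is only locally Lipschitz and $F$ is defined on the half-line. The branch-wise classical argument sidesteps this.

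\emph{Bounds.} If $0\le \xi\le K$, then $\Lambda_s(\xi)\le K$, since $K+\xi(e^{rs}-1)\ge \xi e^{rs}$ is equivalent to $K\ge\xi$. Hence $0\le x_-\le K$. Also $0\le x_+=x_-/(1+\beta x_-)\le x_-$, so $x_+\le K$, and the second branch stays in $[0,K]$ by the same estimate.
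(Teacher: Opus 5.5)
Your proposal is correct and follows the paper's argument: classical logistic uniqueness on $[0,t_0]$, then the atomic law giving $x_+=R_\beta(x_-)$, then the logistic flow restarted from the posterior trace, with each step unique. The differences are minor: you explicitly explain why the time-discontinuity of $F$ at $t_0$ is harmless and check existence by reassembling the integral equation, and you get $\Lambda_s(\xi)\le K$ from the explicit inequality $K+\xi(e^{rs}-1)\ge\xi e^{rs}\iff\xi\le K$ rather than from positive invariance of $[0,K]$ under the logistic vector field.
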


\begin{proof}
On $[0,t_0]$, the equation is the logistic ordinary differential equation. Classical uniqueness therefore gives $U(t)=\Lambda_t(u_0)$, and in particular $x_-=\Lambda_{t_0}(u_0)$. The atomic law \eqref{eq:general-reset} and the definition of $F(t_0,\cdot)$ yield
\[
 U(t_0^+)=x_-+\alpha\frac{R_\beta(x_-)-x_-}{\alpha}=R_\beta(x_-)=x_+.
\]
Starting from the posterior trace, the evolution is again logistic, which gives the second branch in \eqref{eq:logistic-explicit}. Each step is unique, hence the global solution is unique.

If $x\ge0$, then $R_\beta(x)\ge0$ and
\[
 R_\beta(x)\le x
 \quad\Longleftrightarrow\quad
 \frac1{1+\beta x}\le1,
\]
which is true. The interval $[0,K]$ is positively invariant for the logistic flow: the vector field vanishes at the endpoints and is nonnegative in the interior. Alternating the flow and the reset gives $0\le U(t)\le K$.
\end{proof}

\begin{proposition}[Reset intensity]\label{prop:logistic-reset-strength}
For $x>0$,
\[
 x-R_\beta(x)=\frac{\beta x^2}{1+\beta x},
 \qquad
 \frac{R_\beta(x)}x=\frac1{1+\beta x}.
\]
The remaining fraction is strictly decreasing in both $\beta$ and $x$, while $R_\beta(x)>0$ for every finite parameter. Moreover,
\[
 \lim_{\beta\downarrow0}R_\beta(x)=x,
 \qquad
 \lim_{\beta\to\infty}R_\beta(x)=0.
\]
\end{proposition}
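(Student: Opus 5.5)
The two identities are elementary algebra for \(x>0\), \(\beta>0\). I would first compute
\[
 x-R_\beta(x)=x-\frac{x}{1+\beta x}=\frac{x(1+\beta x)-x}{1+\beta x}=\frac{\beta x^2}{1+\beta x},
\]
and then divide \(R_\beta(x)=x/(1+\beta x)\) by \(x>0\) to get the remaining fraction \(1/(1+\beta x)\). Both steps are legitimate because \(1+\beta x>1>0\).

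For monotonicity I would use the fraction \(\rho(\beta,x):=1/(1+\beta x)\). The denominator \(1+\beta x\) is strictly increasing in \(\beta\) when \(x>0\) is fixed, and strictly increasing in \(x\) when \(\beta>0\) is fixed. Since \(s\mapsto1/s\) is strictly decreasing on \((0,\infty)\), \(\rho\) is strictly decreasing in each variable. If derivatives are preferred, the partial derivatives
\[
 \partial_\beta\rho=-\frac{x}{(1+\beta x)^2},\qquad
 \partial_x\rho=-\frac{\beta}{(1+\beta x)^2}
\]
are strictly negative. Positivity of \(R_\beta(x)\) for every finite \(\beta\) is immediate, because it is the quotient of two positive numbers.

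The limits follow from the continuity of \(\beta\mapsto x/(1+\beta x)\) on \([0,\infty)\). Evaluating at \(\beta=0\) gives \(\lim_{\beta\downarrow0}R_\beta(x)=x\). For the other limit, the bound \(0<R_\beta(x)\le 1/\beta\) (valid for \(x>0\)) tends to \(0\) as \(\beta\to\infty\). The only point that needs care is the modeling interpretation. Strict monotonicity in \(x\) concerns the fraction retained, not \(R_\beta(x)\) itself, which is increasing in \(x\). Everything else is routine.
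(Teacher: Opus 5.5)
Your proposal is correct and follows the paper's own proof: elementary algebra for the two identities, the strictly negative partial derivatives of $1/(1+\beta x)$ (or your equivalent monotone-denominator argument) for the monotonicity, and direct evaluation for the limits. Your bound $0<R_\beta(x)\le 1/\beta$ just spells out the step the paper calls immediate.
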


\begin{proof}
The two identities follow from elementary algebra. The derivatives
\[
 \frac{\partial}{\partial\beta}\frac1{1+\beta x}
 =-\frac{x}{(1+\beta x)^2}<0,
 \qquad
 \frac{\partial}{\partial x}\frac1{1+\beta x}
 =-\frac{\beta}{(1+\beta x)^2}<0
\]
prove the monotonicity. The limits are immediate.
\end{proof}

\begin{remark}
The example explains why it is useful to allow the nonlinearity to take a prescribed value at each atom. Changing $F(t_0,\cdot)$ has no effect on a Lebesgue-almost-everywhere ordinary differential equation, whereas under $\mu_g$ that value determines the instantaneous intervention exactly.
\end{remark}

\section{The classical derivative as a measure for a general derivator}\label{sec:general-measure}

The comparison with the classical theory does not require $g$ to have only finitely many jumps. The integral representation identifies the classical distributional derivative directly with a Radon measure.

\begin{revision}
We use a boundary-aware convention that is needed when \(g\) has an atom at the
left endpoint.  If \(U\in BV([a,b];\R^d)\) is represented by its
left-continuous version, \(DU\) denotes the canonical finite vector measure on
\([a,b)\) characterized by
\begin{equation}\label{eq:boundary-aware-BV-measure}
 DU([s,t))=U(t)-U(s),\qquad a\le s\le t\le b.
\end{equation}
Existence follows componentwise from the Lebesgue--Stieltjes construction for a
left-continuous function of bounded variation: write each scalar component as
the difference of two nondecreasing left-continuous functions and subtract the
corresponding finite measures.  Uniqueness follows because the half-open
intervals form a semiring generating the Borel \(\sigma\)-algebra
\cite{Bartle1995}.
\newrev{For a complex-valued scalar function, the same measure is defined by
applying this construction separately to the real and imaginary parts; all
subsequent uniqueness and Radon--Nikodym arguments then apply componentwise.}
\begin{finalrevision}
Its restriction to the open interval \((a,b)\) is exactly the usual classical
distributional derivative. The additional boundary mass is
\[
 DU(\{a\})=U(a^+)-U(a).
\]
It vanishes when \(U(a^+)=U(a)\). In particular, for \(U\in AC_g([a,b])\)
it equals \(D_gU(a)\Delta g(a)\) when \(a\in D_g\), and it vanishes when
\(\Delta g(a)=0\). Distributions tested only in \((a,b)\) cannot see a mass
located at the boundary. The final trace at \(b\) remains separate because the
reference measure and all integral representations are defined on \([a,b)\).
\end{finalrevision}
\end{revision}

\begin{theorem}[Measure-distribution identity]\label{thm:general-classical-measure}
Let $U\in AC_g([a,b];\R^d)$. Then $U\in BV([a,b];\R^d)$ and its
\rev{canonical derivative measure, whose restriction to \((a,b)\) is the
classical distributional derivative, is}
\begin{equation}\label{eq:DU-general}
 DU=(D_gU)\,\mu_g.
\end{equation}
In particular, if
\begin{equation}\label{eq:Lebesgue-decomp-g}
 \mu_g=\rho_g\,dt+\mu_g^{\mathrm{sc}}
 +\sum_{t\in D_g}\Delta g(t)\delta_t
\end{equation}
is the Lebesgue decomposition of $\mu_g$, then
\begin{equation}\label{eq:DU-decomposed}
 DU=(D_gU)\rho_g\,dt
 +(D_gU)\mu_g^{\mathrm{sc}}
 +\sum_{t\in D_g}\Delta g(t)D_gU(t)\delta_t.
\end{equation}
\end{theorem}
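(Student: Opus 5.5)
The plan is to work componentwise and reduce the identity to equality of two finite measures on the semiring of half-open intervals. By \cref{lem:FTC}, applied to each component, we have
\[
 U(t)=U(a)+\int_{[a,t)}D_gU\,d\mu_g,
\]
with $D_gU\in L_g^1([a,b);\R^d)$. Hence the finite vector measure $\nu_U:=(D_gU)\mu_g$ satisfies $\nu_U([s,t))=U(t)-U(s)$ for all $a\le s\le t\le b$, exactly as in \eqref{eq:nu-u-interval}.

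First we check that $U\in BV$. For any partition $a=t_0<\dots<t_N=b$,
\[
 \sum_i\norm{U(t_{i+1})-U(t_i)}
 \le\sum_i\int_{[t_i,t_{i+1})}\norm{D_gU}\,d\mu_g
 =\norm{D_gU}_{L_g^1}.
\]
So the variation is bounded by $\norm{D_gU}_{L_g^1}<\infty$. Moreover, $U$ is left-continuous, as shown in the proof of \cref{prop:Sg-properties}, so $U$ is already its own left-continuous version. The canonical measure $DU$ from \eqref{eq:boundary-aware-BV-measure} is therefore defined, and it has the same interval increments as $\nu_U$.

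The uniqueness theorem for finite signed measures on a generating semiring, applied to real and imaginary parts and to each component, then gives $DU=\nu_U$ on the Borel sets of $[a,b)$. This is \eqref{eq:DU-general}. That its restriction to $(a,b)$ is the classical distributional derivative is the fact already recorded in the boundary-aware convention, so we only cite it. If one wants an explicit check, it suffices to test against $\phi\in C_c^\infty(a,b)$ and apply Fubini to $\int U\phi'\,dt$ using the integral representation.

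For \eqref{eq:DU-decomposed}, we substitute the Lebesgue decomposition \eqref{eq:Lebesgue-decomp-g} into $(D_gU)\mu_g$. Multiplication by the $\mu_g$-integrable density is linear and countably additive over the atomic series, and the series converges in total variation because
\[
 \sum_{t\in D_g}\Delta g(t)\norm{D_gU(t)}\le\norm{D_gU}_{L_g^1}.
\]
At each atom, $D_gU(t)$ is a genuine value of the class, since $\mu_g(\{t\})>0$, and it is given by \eqref{eq:Dg-atom}. This produces the three terms.

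No step is deep. The point requiring care is the boundary: if $a\in D_g$, the mass $\Delta g(a)D_gU(a)$ at $a$ must be kept in $DU$. That mass is invisible to distributions on $(a,b)$, which is why the statement is phrased through the canonical measure on $[a,b)$ rather than the distributional derivative alone.
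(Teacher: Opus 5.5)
Your proposal is correct and follows the paper's proof essentially step for step: the same partition estimate gives $U\in BV$, the same uniqueness argument on the semiring of half-open intervals identifies the canonical measure $DU$ with $(D_gU)\mu_g$, and the decomposition follows by substituting the Lebesgue decomposition of $\mu_g$. Your extra remarks only spell out details the paper leaves implicit: the left continuity of $U$, the total-variation convergence of the atomic series, and the optional Fubini check against $C_c^\infty(a,b)$.
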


\begin{proof}
For every partition $a=t_0<\cdots<t_N=b$, the integral representation gives
\begin{align*}
 \sum_{j=1}^N\norm{U(t_j)-U(t_{j-1})}
 &\le\sum_{j=1}^N\int_{[t_{j-1},t_j)}\norm{D_gU}\,d\mu_g\\
 &=\int_{[a,b)}\norm{D_gU}\,d\mu_g<\infty.
\end{align*}
Taking the supremum over all partitions yields $U\in BV$.

Let $\nu$ be the vector measure
\[
 \nu(E):=\int_E D_gU\,d\mu_g.
\]
For every half-open interval $[s,t)$, the fundamental theorem of Stieltjes calculus gives
\[
 \nu([s,t))=U(t)-U(s).
\]
\begin{revision}
On the other hand, by the boundary-aware definition
\eqref{eq:boundary-aware-BV-measure}, the derivative measure of a
left-continuous \(BV\) function is characterized by the same increments:
\end{revision}
\[
 DU([s,t))=U(t)-U(s).
\]
\begin{revision}
Uniqueness of finite vector measures on the generating semiring of half-open
intervals therefore gives \(DU=\nu\), component by component, which proves
\eqref{eq:DU-general}.  Restricting the identity to \((a,b)\) gives the claimed
equality for the classical distributional derivative.  Substituting
\eqref{eq:Lebesgue-decomp-g} and using linearity of multiplication of a measure
by a measurable density gives \eqref{eq:DU-decomposed}. At every atom,
\(D_gU(t)\Delta g(t)=U(t^+)-U(t)\), including \(t=a\) under the
boundary-aware convention.
\end{revision}
\end{proof}

\begin{corollary}[Characterization of $W_g^{1,1}$ inside $BV$]\label{cor:BV-characterization}
Let $U:[a,b]\to\F$ be left-continuous and of bounded variation. Then
\[
 U\in AC_g([a,b])
 \quad\Longleftrightarrow\quad
 DU\ll\mu_g.
\]
In this case,
\[
 D_gU=\frac{dDU}{d\mu_g}
 \qquad\mu_g\text{-almost everywhere}.
\]
\end{corollary}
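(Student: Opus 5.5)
The plan is to prove the two implications separately, using the boundary-aware derivative measure \eqref{eq:boundary-aware-BV-measure} on \([a,b)\) throughout.

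\emph{Forward implication.} If \(U\in AC_g([a,b])\), then \cref{thm:general-classical-measure}, applied componentwise to the real and imaginary parts in the complex case, gives \(DU=(D_gU)\mu_g\). Since \(D_gU\in L_g^1\), the measure \(DU\) is absolutely continuous with respect to \(\mu_g\). Its Radon--Nikodym density is \(D_gU\), and this density is unique \(\mu_g\)-a.e. Together these facts give both the implication and the stated formula for \(D_gU\).

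\emph{Converse.} Assume \(U\) is left-continuous and of bounded variation, and that \(DU\ll\mu_g\).
\begin{enumerate}[label=(\arabic*)]
 \item The derivative measure \(DU\) is a finite signed or complex Borel measure on \([a,b)\), and \(\mu_g\) is finite. The Radon--Nikodym theorem therefore provides \(v\in L_g^1([a,b))\) with \(DU=v\mu_g\). I will take a Borel representative, as allowed by the completion convention of \cref{sec:prelim}.
 \item For every \(t\in[a,b]\), evaluating the defining increment formula on \([a,t)\) gives
 \[
  U(t)-U(a)=DU([a,t))=\int_{[a,t)}v\,d\mu_g .
 \]
 This is exactly the representation \eqref{eq:AC-rep}.
 \item \cref{thm:FTC-standard} (equivalently \cref{lem:FTC}) then yields \(U\in AC_g([a,b])\) and \(D_gU=v=dDU/d\mu_g\) \(\mu_g\)-a.e.
\end{enumerate}

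The only delicate point, and the one I expect to be the main obstacle, is the correct identification of \(DU\) at the endpoints. The statement must be read with the boundary-aware measure on \([a,b)\), including a possible atom at \(a\) with mass \(U(a^+)-U(a)\). If one used only the distributional derivative on \((a,b)\), the converse could fail when \(a\in D_g\) or when \(U\) jumps at \(a\), because such a jump would be invisible to tests supported in \((a,b)\).

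The endpoint \(b\) causes no difficulty. Left continuity of \(U\) at \(b\) makes the formula \(DU([a,b))=U(b)-U(a)\) consistent, so the representation in step (2) holds also at \(t=b\). Left continuity is likewise what makes the existence of \(DU\) with these increments legitimate in the first place.
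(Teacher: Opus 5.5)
Your proposal is correct and follows the paper's proof step for step. The forward direction comes from $DU=(D_gU)\mu_g$ in \cref{thm:general-classical-measure}, and the converse uses the Radon--Nikodym density $v$, evaluation of the boundary-aware measure $DU$ on $[a,t)$, and the fundamental theorem of Stieltjes calculus. One small correction to your side remark: when $a\in D_g$, restricting to $(a,b)$ would break the density formula at $a$, not the equivalence itself, which fails only for a jump at $a\notin D_g$.
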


\begin{proof}
\begin{revision}
Here \(DU\) is the canonical measure from
\eqref{eq:boundary-aware-BV-measure}; using only the distributional derivative
on \((a,b)\) would lose a possible jump at \(a\).  If \(U\in AC_g\),
\cref{thm:general-classical-measure} gives \(DU=(D_gU)\mu_g\), and hence
\(DU\ll\mu_g\). Conversely, if \(DU\ll\mu_g\), the Radon--Nikodym theorem
provides \(v\in L_g^1\) such that \(DU=v\mu_g\). Evaluation on \([a,t)\)
gives
\end{revision}
\[
 U(t)-U(a)=DU([a,t))=\int_{[a,t)}v\,d\mu_g.
\]
The fundamental theorem of Stieltjes calculus then yields $U\in AC_g$ and $D_gU=v$ almost everywhere.
\end{proof}

\begin{corollary}[A nonlinear equation as an equality of measures]\label{cor:nonlinear-measure}
Let $U$ be an integral solution of $D_gU=F(t,U)$. Then
\begin{equation}\label{eq:nonlinear-measure-general}
 DU=F(\cdot,U)\,\mu_g
\end{equation}
as vector measures \newrev{on $[a,b)$ and, after restriction to $(a,b)$, as classical distributions}. Conversely, if $U$ is left-continuous, belongs to $BV$, satisfies $U(a)=u_0$, and \finalrev{fulfills \eqref{eq:nonlinear-measure-general} as an equality of finite vector measures on $[a,b)$}, then $U$ satisfies the Stieltjes integral equation.
\end{corollary}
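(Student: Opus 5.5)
The forward implication comes straight from \cref{thm:general-classical-measure}. If \(U\) is an integral solution, then \(U\in AC_g([a,b];\R^d)\). The fundamental theorem (\cref{thm:FTC-standard}), applied componentwise, gives \(D_gU=F(\cdot,U)\) \(\mu_g\)-almost everywhere. \Cref{thm:general-classical-measure} then yields \(DU=(D_gU)\mu_g\) as finite vector measures on \([a,b)\). Two measures \(f\mu_g\) and \(h\mu_g\) coincide whenever \(f=h\) \(\mu_g\)-a.e., so \(DU=F(\cdot,U)\mu_g\) on \([a,b)\). Restricting to the open interval \((a,b)\), together with the remark after \eqref{eq:boundary-aware-BV-measure} that this restriction is the classical distributional derivative, gives the distributional statement. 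Before doing so I would check that \(F(\cdot,U)\in L_g^1\): \(U\) is bounded by \eqref{eq:bounded-AC}, and it is \(\overline\mu_g\)-measurable, so the Carath\'eodory composition remark applies.

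For the converse, the first task is to make sense of the right-hand side. The hypothesis is stated as an equality of finite vector measures, so I would read it as including the requirement that \(F(\cdot,U)\in L_g^1\). \(U\) is \(BV\), hence bounded and Borel; if (H1)--(H3) are in force, integrability also follows from \eqref{eq:F-linear-growth}, and otherwise it is implicit in finiteness. Next I would evaluate both measures on \([a,t)\) for \(t\in[a,b]\). The boundary-aware definition \eqref{eq:boundary-aware-BV-measure} gives \(DU([a,t))=U(t)-U(a)=U(t)-u_0\). The right-hand side gives \(\int_{[a,t)}F(s,U(s))\,d\mu_g(s)\). Equating the two is exactly \eqref{eq:integral-equation} at every \(t\), including \(t=b\).

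Finally, I would invoke \cref{lem:FTC} componentwise, with \(v=F(\cdot,U)\in L_g^1\), to conclude that \(U\in AC_g\) and \(D_gU=F(\cdot,U)\) a.e. Equivalently, this is \cref{cor:BV-characterization} with density \(F(\cdot,U)\). So \(U\) is an integral solution in the sense of \cref{def:integral-solution}.

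The only delicate point is the boundary. The equality must be imposed on \([a,b)\) and not merely on \((a,b)\). If \(a\in D_g\), the atom at \(a\) carries the reset \(U(a^+)-U(a)=\Delta g(a)F(a,U(a))\), and a purely distributional equation on \((a,b)\) would lose it. This is why the converse uses the canonical measure \(DU\) together with left-continuity, which identifies point values with the cumulative increments \(DU([a,t))\).
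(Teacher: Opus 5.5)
Your proposal is correct and follows the paper's own proof. The forward direction is \cref{thm:general-classical-measure} combined with $D_gU=F(\cdot,U)$ $\mu_g$-a.e., and the converse evaluates the measure identity on $[a,t)$ to recover \eqref{eq:integral-equation}, with your integrability check, appeal to \cref{lem:FTC} for $U\in AC_g$, and atom-at-$a$ remark only making explicit what the paper leaves implicit.
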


\begin{proof}
The first statement follows from \cref{thm:general-classical-measure} and $D_gU=F(\cdot,U)$. Conversely, evaluate the measure identity on $[a,t)$:
\[
 U(t)-U(a)=DU([a,t))=\int_{[a,t)}F(s,U(s))\,d\mu_g(s),
\]
which is precisely the integral representation.
\end{proof}

\begin{remark}
Identity \eqref{eq:nonlinear-measure-general} shows that the classical formulation also covers the singular continuous part of $g$. The advantage of the weak Stieltjes formalism is not that it creates a measure unavailable in classical distribution theory. Rather, it derives intrinsically the lateral pairing, the endpoint traces, and the density relative to the chosen reference measure.
\end{remark}

\section{Comparison with the classical distributional formulation}\label{sec:classical}

\subsection{Derivative of a function of bounded variation}

A solution with finitely many jumps belongs to $\BV([a,b])$. \finalrev{If \(U\) is left-continuous, is absolutely continuous on the closed branch before \(t_0\), and its posterior extension is absolutely continuous on the closed branch after \(t_0\),} its classical distributional derivative is the measure
\begin{equation}\label{eq:classical-Du}
 DU=U'\,dt+\bigl(U(t_0^+)-U(t_0)\bigr)\delta_{t_0}.
\end{equation}
Indeed, for $\eta\in C_c^\infty((0,T))$, integration by parts on $(0,t_0)$ and $(t_0,T)$ gives
\begin{align*}
 -\int_0^T U\eta'\,dt
 &=\int_0^{t_0}U'\eta\,dt+\int_{t_0}^TU'\eta\,dt\\
 &\quad+\bigl(U(t_0^+)-U(t_0)\bigr)\eta(t_0).
\end{align*}
This is exactly the action of the measure in \eqref{eq:classical-Du}.

If $U$ solves $D_gU=F(t,U)$, then $U'=F(t,U)$ away from the jump and
\begin{equation}\label{eq:classical-with-trace}
 DU=F(t,U)\,dt+\alpha F(t_0,U(t_0))\delta_{t_0}.
\end{equation}
This expression is rigorous because the coefficient of the Dirac mass is written as a scalar determined by the anterior trace.

\subsection{The choice of representative}

The formal expression
\[
 DU=F(t,U)\,dt+\alpha F(t,U)\delta_{t_0}
\]
does not by itself specify which value of the discontinuous function $F(t,U(t))$ should be evaluated at $t_0$. As an element of $L^1(dt)$, the function $F(t,U)$ is defined only Lebesgue-almost everywhere. Changing its value at $t_0$ does not alter the absolutely continuous part but does change the product with $\delta_{t_0}$. In general, the choices
\[
 F(t_0,U(t_0)),\qquad F(t_0,U(t_0^+))
\]
lead to different transmission laws.

In Stieltjes calculus, the canonical representative is left-continuous and
\[
 D_gU(t_0)=\frac{U(t_0^+)-U(t_0)}{\alpha}.
\]
Thus, the anterior orientation is built into the definition of the operator.

\begin{theorem}[Equivalence with the completed classical formulation]\label{thm:classical-equivalence}
For the derivator \eqref{eq:g-one-jump}, fix the prescribed initial datum
\(u_0\). \newrev{Let \(U:[0,T]\to\R\) satisfy \(U(0)=u_0\).}
\begin{finalrevision}
Assume that \(U\) is left-continuous, that the posterior trace
\(U(t_0^+)\) exists, that \(U|_{[0,t_0]}\) is absolutely continuous, and
that the posterior extension
\[
 U_+(t_0):=U(t_0^+),\qquad U_+(t):=U(t)\quad(t>t_0),
\]
is absolutely continuous on \([t_0,T]\). Assume also
\(F(\cdot,U(\cdot))\in L_g^1\).
\end{finalrevision}
The following statements are equivalent:
\begin{enumerate}[label=\roman*)]
 \item $U$ is an integral solution of $D_gU=F(t,U)$;
 \item \finalrev{$U'=F(t,U)$ almost everywhere away from $t_0$ and $U$ satisfies the reset law \eqref{eq:reset-general};}
 \item $U\in\BV([0,T])$, is taken in its left-continuous representative, is absolutely continuous on both branches, and satisfies
 \begin{equation}\label{eq:classical-completed}
 DU=F(t,U)\,dt+\alpha F(t_0,U(t_0))\delta_{t_0},
 \end{equation}
 where $U(t_0)$ is the anterior trace.
\end{enumerate}
\end{theorem}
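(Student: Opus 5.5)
The plan is to prove the cycle i)$\Rightarrow$ii)$\Rightarrow$iii)$\Rightarrow$i). The main tools are \cref{prop:one-jump-decomposition} and \cref{prop:atomic-reset-general} for the first two arrows, and \cref{cor:nonlinear-measure} together with formula \eqref{eq:classical-Du} for the last. Throughout, $\mu_g=dt+\alpha\delta_{t_0}$. Hence $F(\cdot,U)\mu_g=F(t,U)\,dt+\alpha F(t_0,U(t_0))\delta_{t_0}$. The atom's coefficient is well defined because the value of $F(\cdot,U)$ at $t_0$ is fixed by its $L_g^1$ class, and $U$ is the left-continuous representative, so $U(t_0)$ is the anterior trace.

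For i)$\Rightarrow$ii), write the integral equation in the explicit form \eqref{eq:one-jump-integral}. On $[0,t_0]$ and on $(t_0,T]$ the atomic term is constant. Classical differentiation of the Lebesgue integral then gives $U'=F(t,U)$ a.e.\ away from $t_0$. This needs only $F(\cdot,U)\in L^1(dt)$, which follows from $F(\cdot,U)\in L_g^1$, and no continuity of $F$. The reset law is exactly \cref{prop:atomic-reset-general}.

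For ii)$\Rightarrow$i), I would copy the converse half of the proof of \cref{prop:one-jump-decomposition}, using only absolute continuity. The absolute continuity of $U|_{[0,t_0]}$ and the fundamental theorem of Lebesgue calculus give $U(t)=u_0+\int_0^tF(s,U)\,ds$ for $t\le t_0$. The absolute continuity of $U_+$ on $[t_0,T]$ gives $U(t)=U(t_0^+)+\int_{t_0}^tF(s,U)\,ds$ for $t>t_0$. Substituting the reset law then reproduces \eqref{eq:one-jump-integral}, which is the $\mu_g$-integral equation \eqref{eq:integral-equation}.

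For ii)$\Leftrightarrow$iii), the standing hypotheses already make $U$ left-continuous, $BV$, and absolutely continuous on both branches. Formula \eqref{eq:classical-Du} therefore identifies $DU=U'\,dt+(U(t_0^+)-U(t_0))\delta_{t_0}$. Assuming ii), substitute $U'=F(t,U)$ a.e.\ and the reset law to obtain \eqref{eq:classical-completed}. Conversely, comparing two finite measures: the absolutely continuous parts give $U'=F(t,U)$ a.e., by uniqueness of Radon--Nikodym densities with respect to $dt$. The masses at $t_0$ give $U(t_0^+)-U(t_0)=\alpha F(t_0,U(t_0))$, by uniqueness of the Lebesgue decomposition into absolutely continuous and atomic parts. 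Alternatively, iii)$\Rightarrow$i) follows directly from the converse part of \cref{cor:nonlinear-measure}, evaluating on $[0,t)$.

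The main obstacle I expect is the boundary and atomic bookkeeping in \eqref{eq:classical-Du}. That formula was derived by testing against $C_c^\infty((0,T))$, so to compare with the canonical measure on $[0,T)$ of \eqref{eq:boundary-aware-BV-measure} I must check that no mass sits at $0$. This holds because $g$ has no atom at $0$ and $U$ is absolutely continuous near $0$. I must also ensure that the atom is read with the anterior value $U(t_0)$ and not $U(t_0^+)$. I would handle both points by verifying the identity on every half-open interval $[s,t)$ directly, with separate cases for $t\le t_0$ and $s\le t_0<t$, and then invoking uniqueness on the generating semiring, as in \cref{thm:general-classical-measure}.
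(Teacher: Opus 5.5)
Your proposal is correct and follows essentially the paper's proof: i)$\Leftrightarrow$ii) by the branchwise integral calculation of \cref{prop:one-jump-decomposition}, using only absolute continuity and not continuity of $F$; ii)$\Rightarrow$iii) via \eqref{eq:classical-Du}; and iii)$\Rightarrow$ii) via uniqueness of the Lebesgue decomposition, which also separates out the mass at $t_0$. Your optional shortcut through \cref{cor:nonlinear-measure} is also valid, and the check of no mass at $0$ that you flag is exactly what is needed to pass from the distribution on $(0,T)$ to the canonical measure on $[0,T)$, whereas the main route does not need it.
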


\begin{proof}
\begin{finalrevision}
The equivalence i)$\Leftrightarrow$ii) follows by integrating on the two
closed branches with their indicated starting traces and adding the reset
increment, exactly as in the integral calculation of
\cref{prop:one-jump-decomposition}. If ii) holds, then
\eqref{eq:classical-Du}, the ordinary differential equation on the two branches,
and the reset law yield \eqref{eq:classical-completed}.
\end{finalrevision}

Conversely, assume iii). Uniqueness of the decomposition of a measure into an absolutely continuous part with respect to $dt$ and a singular part implies
\[
 U'=F(t,U)\quad\text{almost everywhere away from }t_0
\]
and
\[
 U(t_0^+)-U(t_0)=\alpha F(t_0,U(t_0)).
\]
Since $U$ is absolutely continuous on each branch and $F(\cdot,U(\cdot))$ is integrable, the fundamental theorem of Lebesgue integration turns the almost-everywhere differential equality into the corresponding classical integral equation on each branch. Therefore ii) holds.
\end{proof}

\begin{remark}[Exact scope of the comparison]
Classical distribution theory is not unable to describe the problem. Equation \eqref{eq:classical-completed} is rigorous and equivalent. The conceptual difference is that the classical formulation requires the trace multiplying the Dirac mass to be supplied explicitly, whereas in the Stieltjes formulation both the orientation and the atomic mass belong to the derivator.
\end{remark}

\section{Several resets and finite classical equivalence}

Assume
\begin{equation}\label{eq:finite-jump-g}
 g(t)=t+\sum_{j=1}^{N}\alpha_j\ind_{(t_j,b]}(t),
 \qquad a<t_1<\cdots<t_N<b,
 \qquad \alpha_j>0.
\end{equation}
Then
\[
 \mu_g=dt+\sum_{j=1}^{N}\alpha_j\delta_{t_j}.
\]
The integral equation becomes
\begin{equation}\label{eq:multi-jump-integral}
 U(t)=u_0+\int_a^tF(s,U(s))\,ds
 +\sum_{t_j<t}\alpha_jF(t_j,U(t_j)),
\end{equation}
and each atom satisfies
\begin{equation}\label{eq:multi-reset}
 U(t_j^+)=U(t_j)+\alpha_jF(t_j,U(t_j)).
\end{equation}

\begin{theorem}[Equivalence for finitely many resets]\label{thm:finite-classical-equivalence}
\begin{finalrevision}
Set \(t_0=a\) and \(t_{N+1}=b\). Assume componentwise that \(U\) is
left-continuous, that every posterior trace \(U(t_j^+)\) exists, and that
\(U|_{[a,t_1]}\) is absolutely continuous. For \(1\le j\le N\), define
\[
 U_j(t_j):=U(t_j^+),\qquad U_j(t):=U(t)\quad(t_j<t\le t_{j+1}),
\]
and assume \(U_j\) is absolutely continuous on \([t_j,t_{j+1}]\).
Assume \(F(\cdot,U(\cdot))\in L_g^1\).
\end{finalrevision}
\newrev{Assume also the prescribed initial condition $U(a)=u_0$.} The following statements are equivalent:
\begin{enumerate}[label=\roman*)]
 \item $U$ satisfies the Stieltjes integral equation \eqref{eq:multi-jump-integral};
 \item $U'=F(t,U)$ almost everywhere outside the points $t_j$, and the reset laws \eqref{eq:multi-reset} hold;
 \item $U\in BV([a,b];\R^d)$ and its classical distributional derivative satisfies
 \begin{equation}\label{eq:finite-classical-distribution}
 DU=F(t,U)\,dt
 +\sum_{j=1}^{N}\alpha_jF(t_j,U(t_j))\delta_{t_j},
 \end{equation}
 where every coefficient is evaluated at the anterior trace.
\end{enumerate}
\end{theorem}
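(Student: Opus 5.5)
The plan is to reduce everything to the one-jump case of \cref{thm:classical-equivalence}, applied branch by branch, together with the measure identity of \cref{thm:general-classical-measure} and \cref{cor:nonlinear-measure}. First observe that for the derivator \eqref{eq:finite-jump-g} one has $\mu_g=dt+\sum_j\alpha_j\delta_{t_j}$ with $t_j\in(a,b)$. Hence the Stieltjes integral $\int_{[a,t)}F(s,U(s))\,d\mu_g$ splits exactly into $\int_a^tF(s,U(s))\,ds$ plus $\sum_{t_j<t}\alpha_jF(t_j,U(t_j))$. The hypothesis $F(\cdot,U(\cdot))\in L_g^1$ makes both pieces finite, so \eqref{eq:multi-jump-integral} is precisely the integral equation \eqref{eq:integral-equation} for this $g$.

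For i)$\Rightarrow$ii), restrict \eqref{eq:multi-jump-integral} to the open interval $(t_j,t_{j+1})$. There the atomic sum is constant, so the Lebesgue differentiation theorem gives $U'=F(t,U)$ almost everywhere on that interval. The reset law \eqref{eq:multi-reset} is \cref{prop:atomic-reset-general}, or directly the right limit of \eqref{eq:multi-jump-integral} at $t_j$ minus its value there; left continuity of the integral representation ensures that the value at $t_j$ is the anterior trace.

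For ii)$\Rightarrow$i), argue by induction on $j$. On $[a,t_1]$, absolute continuity and $U'=F(t,U)$ a.e.\ give $U(t)=u_0+\int_a^tF\,ds$. Assume \eqref{eq:multi-jump-integral} holds on $[a,t_j]$. For $t\in(t_j,t_{j+1}]$, apply the fundamental theorem to the absolutely continuous extension $U_j$ on $[t_j,t_{j+1}]$. This gives $U(t)=U(t_j^+)+\int_{t_j}^tF\,ds$, and substituting the reset law for $U(t_j^+)$ adds exactly the term $\alpha_jF(t_j,U(t_j))$. This is the computation of \cref{prop:one-jump-decomposition}, repeated $N$ times.

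For i)$\Rightarrow$iii), \cref{cor:nonlinear-measure} gives $DU=F(\cdot,U)\mu_g$. Substituting the explicit form of $\mu_g$ yields \eqref{eq:finite-classical-distribution}, with coefficients $F(t_j,U(t_j))$ evaluated at the anterior trace, as in \eqref{eq:DU-decomposed}. Since no atom sits at $a$, the measure on $[a,b)$ has no boundary mass, so its restriction to $(a,b)$ loses nothing; bounded variation comes from \cref{thm:general-classical-measure}.

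For iii)$\Rightarrow$ii), I would first compute $DU$ directly from the branchwise absolute continuity, generalizing \eqref{eq:classical-Du}:
\[
 DU=U'\,dt+\sum_{j=1}^N\bigl(U(t_j^+)-U(t_j)\bigr)\delta_{t_j}.
\]
This follows by integrating by parts against $\eta\in C_c^\infty((a,b))$ on each of the $N+1$ subintervals. Comparing with \eqref{eq:finite-classical-distribution} via uniqueness of the Lebesgue decomposition gives $U'=F(t,U)$ a.e. The singular parts are finite combinations of Dirac masses at distinct points, so their coefficients must agree, and this gives \eqref{eq:multi-reset}.

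The main obstacle is only bookkeeping, namely keeping anterior and posterior traces straight at each $t_j$. In particular, the induction step must use the posterior extension $U_j$, not $U$ itself, on the closed branch $[t_j,t_{j+1}]$. For vector-valued $U$ everything is done componentwise.
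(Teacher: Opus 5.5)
Your proposal is correct and follows essentially the paper's route. You prove i)$\Leftrightarrow$ii) by restricting \eqref{eq:multi-jump-integral} to the atom-free branches and telescoping the reset increments, and iii)$\Rightarrow$ii) by computing $DU=U'\,dt+\sum_j(U(t_j^+)-U(t_j))\delta_{t_j}$ branchwise and invoking uniqueness of the Lebesgue decomposition; the only variation is that you reach iii) from i) through \cref{cor:nonlinear-measure} (noting that i) with $F(\cdot,U)\in L_g^1$ puts $U$ in $AC_g$ by \cref{lem:FTC}), whereas the paper derives iii) from ii) by the same integration by parts against $\eta\in C_c^\infty((a,b))$ that you use for the converse, and both routes are valid.
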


\begin{proof}
The implication i)$\Rightarrow$ii) follows by restricting \eqref{eq:multi-jump-integral} to each atom-free interval and taking the right jump at every $t_j$. For ii)$\Rightarrow$i), integrate the ordinary differential equation on each branch, add the increments \eqref{eq:multi-reset}, and sum telescopically.

Assume ii). For $\eta\in C_c^\infty((a,b);\R)$, integrate by parts on
\[
 (a,t_1),\ (t_1,t_2),\ldots,(t_N,b).
\]
The interior boundary terms combine into
\[
 \sum_{j=1}^N\bigl(U(t_j^+)-U(t_j)\bigr)\eta(t_j),
\]
while the ordinary integrals produce $\int F(t,U)\eta\,dt$. Using \eqref{eq:multi-reset},
\[
 -\int_a^bU\eta'\,dt
 =\int_a^bF(t,U)\eta\,dt
 +\sum_{j=1}^N\alpha_jF(t_j,U(t_j))\eta(t_j),
\]
which is iii).

Finally, if iii) holds, uniqueness of the decomposition of a measure into a part absolutely continuous with respect to $dt$ and an atomic singular part gives the ordinary differential equation almost everywhere and
\[
 U(t_j^+)-U(t_j)=\alpha_jF(t_j,U(t_j))
\]
for every $j$. This is ii).
\end{proof}

\section{Regular first-order distributional interpretation}\label{sec:prospective-distributions}

The weak theory developed above can be placed in a distributional language without claiming, at this stage, a complete local theory of $g$-adapted test functions. The purpose of this section is deliberately limited. \finalrev{We identify a rigorous global transposition framework in which the weak derivative constructed in \cref{thm:weak-strong-p} becomes the derivative of a traced regular state, and we state the compatibility property satisfied by that traced regular sector.}

Fix $1\le p\le\infty$ and let $p'$ be the conjugate exponent. We use two copies of the same Sobolev test space in order to keep track of the lateral orientation. Set
\[
 \mathcal E_{g,p'}^-:=\mathcal W_g^{1,p'}([a,b])
\]
and let $\mathcal E_{g,p'}^+$ be an abstract copy of $\mathcal E_{g,p'}^-$. We denote by
\[
 \mathcal S_g:\mathcal E_{g,p'}^-\longrightarrow \mathcal E_{g,p'}^+
\]
the canonical identification and write its elements as $\mathcal S_g\Phi$. The notation is chosen so that the regular pairing on the posterior copy is evaluated through the actual posterior trace $S_g\Phi$. We equip $\mathcal E_{g,p'}^+$ with the transported norm
\[
 \norm{\mathcal S_g\Phi}_{\mathcal E_{g,p'}^+}
 :=\norm{\Phi}_{\mathcal W_g^{1,p'}}.
\]
Thus $\mathcal S_g$ is, by definition, a linear isometric isomorphism. This abstract-copy construction is important: it records the change of orientation imposed by integration by parts without asserting that the pointwise function $S_g\Phi$ itself belongs to the same strong Sobolev space.

For $u\in L_g^p([a,b))$, define the anterior regular functional $T_u^-\in(\mathcal E_{g,p'}^-)^*$ by
\begin{equation}\label{eq:regular-minus-functional}
 \ip{T_u^-}{\Phi}
 :=\int_{[a,b)}u\Phi\,d\mu_g.
\end{equation}
For $v\in L_g^p([a,b))$, define the posterior regular functional $T_v^+\in(\mathcal E_{g,p'}^+)^*$ by
\begin{equation}\label{eq:regular-plus-functional}
 \ip{T_v^+}{\mathcal S_g\Phi}
 :=\int_{[a,b)}vS_g\Phi\,d\mu_g.
\end{equation}
Both definitions are continuous. Indeed, H\"older's inequality gives
\[
 \abs{\ip{T_u^-}{\Phi}}
 \le \norm{u}_{L_g^p}\norm{\Phi}_{L_g^{p'}},
\]
whereas \cref{lem:trace-control-p} yields a constant $C_{g,p'}>0$ such that
\[
 \abs{\ip{T_v^+}{\mathcal S_g\Phi}}
 \le \norm{v}_{L_g^p}\norm{S_g\Phi}_{L_g^{p'}}
 \le C_{g,p'}\norm{v}_{L_g^p}
       \norm{\Phi}_{\mathcal W_g^{1,p'}}.
\]
The same estimates cover the endpoint cases $p=1$ and $p=\infty$, with the conjugate-exponent conventions already used in \cref{sec:operator}. No identification of the full dual of $L_g^\infty$ is involved.

\begin{definition}[Global first-order derivative of a traced regular state]\label{def:global-regular-distributional-derivative}
Let $\mathbf u=([u],u_a,u_b)\in L_g^p([a,b))\times\F^2$. Its global first-order derivative is the functional
\[
 \mathbf D_{g,p}\mathbf u\in(\mathcal E_{g,p'}^+)^*
\]
defined by
\begin{equation}\label{eq:global-regular-transposition}
 \ip{\mathbf D_{g,p}\mathbf u}{\mathcal S_g\Phi}
 :=u_b\Phi(b)-u_a\Phi(a)
   -\int_{[a,b)}uD_g\Phi\,d\mu_g,
 \qquad \Phi\in\mathcal E_{g,p'}^-.
\end{equation}
\end{definition}

The definition is meaningful and continuous. The trace estimates in \cref{lem:trace-control-p} and H\"older's inequality imply
\[
 \begin{aligned}
 \abs{\ip{\mathbf D_{g,p}\mathbf u}{\mathcal S_g\Phi}}
 &\le \abs{u_b}\abs{\Phi(b)}+\abs{u_a}\abs{\Phi(a)}
      +\norm{u}_{L_g^p}\norm{D_g\Phi}_{L_g^{p'}}\\
 &\le C\bigl(\norm{u}_{L_g^p}+\abs{u_a}+\abs{u_b}\bigr)
      \norm{\Phi}_{\mathcal W_g^{1,p'}},
 \end{aligned}
\]
for a constant $C$ depending only on $g$ and $p$. Because $\mathcal S_g$ is an isometric isomorphism, the right-hand side of \eqref{eq:global-regular-transposition} determines a unique continuous functional on $\mathcal E_{g,p'}^+$.

The next proposition shows that the main characterization theorem is exactly a regularity criterion for this global derivative.

\begin{proposition}[Regularity criterion]\label{prop:regular-distributional-sector}
Assume $M_g>0$. Let
\[
 \mathbf u=([u],u_a,u_b)\in L_g^p([a,b))\times\F^2,
 \qquad v\in L_g^p([a,b)).
\]
The following assertions are equivalent.

\begin{enumerate}[label=\textup{(\roman*)}]
 \item The global derivative of the traced state is the posterior regular functional with density $v$:
 \[
  \mathbf D_{g,p}\mathbf u=T_v^+.
 \]
 \item The traced state $\mathbf u$ admits a unique strong representative $U\in\mathcal W_g^{1,p}([a,b])$ satisfying
 \[
  D_gU=v\qquad\mu_g\text{-almost everywhere}.
 \]
\end{enumerate}

Consequently, if $\mathbf D_{g,p}\mathbf u$ is posterior regular with some $L_g^p$ density, that density is unique. For every $U\in\mathcal W_g^{1,p}([a,b])$ one has
\begin{equation}\label{eq:regular-sector-compatibility}
 \mathbf D_{g,p}([U],U(a),U(b))=T_{D_gU}^+.
\end{equation}
\end{proposition}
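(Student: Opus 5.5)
The plan is to reduce the proposition entirely to \cref{thm:weak-strong-p}. The key observation is that, since $\mathcal S_g$ is a bijection from $\mathcal E_{g,p'}^-$ onto $\mathcal E_{g,p'}^+$, two functionals in $(\mathcal E_{g,p'}^+)^*$ coincide if and only if they agree on every element of the form $\mathcal S_g\Phi$ with $\Phi\in\mathcal W_g^{1,p'}([a,b])$. By \eqref{eq:global-regular-transposition} and \eqref{eq:regular-plus-functional}, the equality $\mathbf D_{g,p}\mathbf u=T_v^+$ therefore reads
\[
 u_b\Phi(b)-u_a\Phi(a)-\int_{[a,b)}uD_g\Phi\,d\mu_g
 =\int_{[a,b)}vS_g\Phi\,d\mu_g
 \qquad\forall\,\Phi\in\mathcal W_g^{1,p'}([a,b]).
\]
Rearranged, this is exactly identity \eqref{eq:weak-p-duality}. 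All terms are finite by H\"older's inequality and \cref{lem:trace-control-p}, or by \cref{prop:endpoint-duality} in the endpoint cases.

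With this reformulation, (i)$\Leftrightarrow$(ii) follows directly from \cref{thm:weak-strong-p}. That theorem says \eqref{eq:weak-p-duality} holds if and only if the reconstructed $U(t)=u_a+\int_{[a,t)}v\,d\mu_g$ lies in $\mathcal W_g^{1,p}$, represents $[u]$, and has traces $u_a,u_b$. In that case $D_gU=v$ almost everywhere.

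Some care is needed with the phrasing of (ii). It should be read as ``$\mathbf u$ admits a strong representative with the prescribed traces and derivative $v$.'' For (ii)$\Rightarrow$(i), take such a $U$ and apply lateral integration by parts (\cref{thm:ibp}) to $U$ and $\Phi$. Since $U=u$ $\mu_g$-a.e., $U(a)=u_a$, $U(b)=u_b$ and $D_gU=v$, this yields \eqref{eq:weak-p-duality}. Uniqueness of the representative follows from \cref{lem:traced-representative-injective}, applied to the difference of two representatives with equal traces.

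Uniqueness of the density is then immediate. If $T_{v_1}^+=\mathbf D_{g,p}\mathbf u=T_{v_2}^+$, then (ii) gives strong representatives $U_1,U_2$ of the same traced state. \cref{lem:traced-representative-injective} forces $U_1\equiv U_2$, so $v_1=D_gU_1=D_gU_2=v_2$ $\mu_g$-a.e. The compatibility identity \eqref{eq:regular-sector-compatibility} is the case (ii)$\Rightarrow$(i) with $\mathbf u=([U],U(a),U(b))$ and $v=D_gU$.

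I expect no real obstacle. The only delicate point is being explicit that testing against $\mathcal S_g\Phi$ exhausts $\mathcal E_{g,p'}^+$, because $\mathcal S_g$ is surjective by construction. This is what lets equality of functionals be checked on the whole test space $\mathcal W_g^{1,p'}$, which is precisely the test class of \cref{thm:weak-strong-p}.
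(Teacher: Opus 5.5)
Your proposal is correct and follows the paper's proof essentially step for step: you read $\mathbf D_{g,p}\mathbf u=T_v^+$ on the tests $\mathcal S_g\Phi$, rearrange it into \eqref{eq:weak-p-duality}, use \cref{thm:weak-strong-p} for (i)$\Leftrightarrow$(ii), and obtain \eqref{eq:regular-sector-compatibility} as the special case $\mathbf u=([U],U(a),U(b))$, $v=D_gU$. Your explicit use of \cref{lem:traced-representative-injective} for uniqueness of the representative and of the density is slightly more careful than the paper's brief appeal to the uniqueness part of \cref{thm:weak-strong-p}, but it is the same argument.
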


\begin{proof}
By Definitions \ref{def:global-regular-distributional-derivative} and \eqref{eq:regular-plus-functional}, the identity $\mathbf D_{g,p}\mathbf u=T_v^+$ means that, for every $\Phi\in\mathcal W_g^{1,p'}([a,b])$,
\[
 u_b\Phi(b)-u_a\Phi(a)
 -\int_{[a,b)}uD_g\Phi\,d\mu_g
 =\int_{[a,b)}vS_g\Phi\,d\mu_g.
\]
After rearranging terms, this is precisely the lateral weak identity
\[
 \int_{[a,b)}uD_g\Phi\,d\mu_g
 +\int_{[a,b)}vS_g\Phi\,d\mu_g
 =u_b\Phi(b)-u_a\Phi(a).
\]
The equivalence between this identity and the existence of the unique representative
\[
 U(t)=u_a+\int_{[a,t)}v\,d\mu_g,
 \qquad U=u\quad\mu_g\text{-almost everywhere},
 \qquad U(b)=u_b,
\]
is exactly \cref{thm:weak-strong-p}. The same theorem gives $U\in\mathcal W_g^{1,p}$ and $D_gU=v$ almost everywhere. Conversely, if such a representative exists, the lateral integration-by-parts formula in \cref{thm:ibp}, first for the representative and then for its $L_g^p$ class, yields the weak identity and hence $\mathbf D_{g,p}\mathbf u=T_v^+$. Uniqueness of $v$ follows from the uniqueness part of \cref{thm:weak-strong-p}. Taking $u=U$, $u_a=U(a)$, $u_b=U(b)$, and $v=D_gU$ gives \eqref{eq:regular-sector-compatibility}.
\end{proof}

\begin{remark}[Vanishing boundary traces]\label{rem:zero-boundary-transposition}
If $U\in\mathcal W_g^{1,p}$ and a test function satisfies $\Phi(a)=\Phi(b)=0$, then \eqref{eq:regular-sector-compatibility} reduces to
\[
 \ip{T_{D_gU}^+}{\mathcal S_g\Phi}
 =-\int_{[a,b)}U D_g\Phi\,d\mu_g.
\]
The integral on the right is the canonical $L_g^p$--$L_g^{p'}$ pairing. We deliberately do not write it as $\ip{T_U^-}{D_g\Phi}$, because $T_U^-$ was defined on $\mathcal E_{g,p'}^-$ whereas $D_g\Phi$ need only belong to $L_g^{p'}$ and need not be another Sobolev test. The displayed equality is therefore the correctly typed first-order transposition formula. Its lateral character is essential: the derivative acts on the posterior copy, and the regular posterior functional is paired with $S_g\Phi$, not with the anterior value $\Phi$ at an atom.
\end{remark}

\begin{remark}[Scope of the interpretation]\label{rem:scope-prospective-distributions}
The spaces $\mathcal E_{g,p'}^\pm$ are global Sobolev test spaces and depend on the exponent $p$. Consequently, the preceding construction is not presented as a local, exponent-independent distribution theory. It does not yet provide restriction to open subsets, a $g$-adapted support, singular right-hand sides, or iterated derivatives. \begin{finalrevision}
What it does provide is the global compatibility identity
\eqref{eq:regular-sector-compatibility} for traced regular states. Any future
local formulation must first specify how its input objects retain the lateral
and boundary information required for reconstruction before an analogous
compatibility identity can be asserted. No derivative acting on the bare
anterior regular functional \(T_U^-\) alone is constructed here. Thus the
current paper fixes the regular first-order compatibility that a broader theory
of \(g\)-distributions would have to preserve after the necessary trace
information has been encoded; it does not assume the existence of that broader
theory in any of the preceding arguments.
\end{finalrevision}
\end{remark}

\section{Conclusions}

We have shown that integration by parts for a Stieltjes derivative is not formally skew-symmetric when the underlying measure has atoms. The exact correction consists in moving one factor to its posterior trace. This identity leads to a global weak derivative for traced states and to a complete characterization of $W_g^{1,p}$, for every $1\le p\le\infty$, through integral reconstruction and lateral duality.

The theory applies to finite-dimensional Carath\'eodory equations whose growth and Lipschitz functions are integrable with respect to $\mu_g$. Picard iteration, combined with an estimate on strictly ordered simplices that remains valid for atomic measures, yields existence, uniqueness, and continuous dependence. Every atom produces an intrinsic reset law. The cubic and logistic models illustrate, respectively, a dissipative correction and a growth process with a positivity-preserving nonlinear intervention.

The $L_g^p$--$L_g^{p'}$ duality, closedness of the traced operator, and the Poincar\'e--Stieltjes inequality place the characterization inside a complete first-order operator framework. The Schauder result is presented as a consequence of the weak--integral equivalence and of the compactness criterion for dominated primitives. It is not the main axis of the paper, but it confirms that the weak formulation remains effective without Lipschitz uniqueness.

The general identity $DU=(D_gU)\mu_g$ makes both the advantage and the limitation of the approach precise. Classical distribution theory describes the same solutions once it is completed by atomic coefficients evaluated at prescribed traces. The Stieltjes formalism does not eliminate that information; it incorporates it from the outset into the derivator, the reference measure, and the integration-by-parts formula.

\begin{finalrevision}
The global transposition framework in \cref{sec:prospective-distributions}
identifies the present results as a regular first-order sector that a
prospective theory of \(g\)-distributions would have to extend. The proved
compatibility is \eqref{eq:regular-sector-compatibility} for traced regular
states. Any future local oriented theory must first retain the lateral and
boundary information needed for reconstruction before asserting an analogous
identity. Developing the corresponding local test spaces, singular
right-hand sides, and higher-order structures remains a separate problem. The
present formulation also provides a natural starting point for stability with
respect to the derivator, weak approximation of Stieltjes measures, and
evolution problems with values in Banach spaces. None of these extensions is
used in the proofs of this paper.
\end{finalrevision}

\section*{Funding}
\rev{The author was supported by the Xunta de Galicia through the project
``Consolidaci\'on e Estruturaci\'on 2023 GRC GI-1561---Ecuaci\'ons
diferenciais non lineais (EDNL).''}

\section*{Declaration of competing interest}
The author declares that he has no known competing financial interests or personal relationships that could have appeared to influence the work reported in this paper.

\section*{Data availability}
No data were used for the research described in this article.

\section*{Author contributions}
Francisco Javier Fern\'andez Fern\'andez: Conceptualization, Methodology, Formal analysis, Investigation, Writing -- original draft, Writing -- review and editing.


\begin{thebibliography}{99}

\bibitem{Agarwal2006}
R.~P. Agarwal, V. Otero--Espinar, K. Perera and D.~R. Vivero,
Basic properties of Sobolev's spaces on time scales,
\emph{Adv. Difference Equ.} \textbf{2006} (2006), Article ID 38121, 14 pp., doi:10.1155/ADE/2006/38121.

\bibitem{Bartle1995}
R.~G. Bartle,
\emph{The Elements of Integration and Lebesgue Measure},
Wiley, New York, 1995.

\bibitem{DerrKinzebulatov2006}
V. Derr and D. Kinzebulatov,
Distributions with dynamic test functions and multiplication by discontinuous functions,
\emph{J. Math. Anal. Appl.} \textbf{326} (2007), 47--68, doi:10.1016/j.jmaa.2006.02.070.

\bibitem{Kinzebulatov2007}
D. Kinzebulatov,
Systems with distributions and viability theorem,
\emph{J. Math. Anal. Appl.} \textbf{331} (2007), 1046--1067, doi:10.1016/j.jmaa.2006.09.048.

\bibitem{EckhardtTeschl2012}
J. Eckhardt and G. Teschl,
On the connection between the Hilger and Radon--Nikodym derivatives,
\emph{J. Math. Anal. Appl.} \textbf{385} (2012), 1184--1189, doi:10.1016/j.jmaa.2011.07.041.

\bibitem{FerMarToj25Product}
F.~J. Fern\'andez, I. M\'arquez Alb\'es and F.~A.~F. Tojo,
Consequences of the product rule in Stieltjes differentiability,
\emph{Carpathian J. Math.} \textbf{41} (2025), 107--135,
doi:10.37193/CJM.2025.01.10.

\bibitem{FerMarToj22}
\begin{revision}
F.~J. Fern\'andez, I. M\'arquez Alb\'es and F.~A.~F. Tojo,
On first and second order linear Stieltjes differential equations,
\emph{J. Math. Anal. Appl.} \textbf{511} (2022), Article 126010,
doi:10.1016/j.jmaa.2022.126010.
\end{revision}

\bibitem{FerMarTojVil25Kernel}
F.~J. Fern\'andez, I. M\'arquez Alb\'es, F.~A.~F. Tojo and C. Villanueva Mariz,
On the kernel of the Stieltjes derivative and the space of bounded Stieltjes-differentiable functions,
\emph{Electron. J. Qual. Theory Differ. Equ.} \textbf{2025} (2025), No.~36, 1--41,
doi:10.14232/ejqtde.2025.1.36.

\bibitem{FerToVil24}
F.~J. Fern\'andez, F.~A.~F. Tojo and C. Villanueva,
Compactness criteria for Stieltjes function spaces and applications,
\emph{Results Math.} \textbf{79} (2024), Article 98, doi:10.1007/s00025-024-02132-4.

\bibitem{FriLo17}
M. Frigon and R. L\'opez Pouso,
Theory and applications of first-order systems of Stieltjes differential equations,
\emph{Adv. Nonlinear Anal.} \textbf{6} (2017), 13--36.

\bibitem{LoMa18}
R. L\'opez Pouso and I. M\'arquez Alb\'es,
General existence principles for Stieltjes differential equations with applications to mathematical biology,
\emph{J. Differential Equations} \textbf{264} (2018), 5388--5407.

\bibitem{LoRo14}
R. L\'opez Pouso and A. Rodr\'iguez,
A new unification of continuous, discrete, and impulsive calculus through Stieltjes derivatives,
\emph{Real Anal. Exchange} \textbf{40} (2014/15), \finalrev{319--354}.

\bibitem{MaMon20}
I. M\'arquez Alb\'es and G.~A. Monteiro,
Notes on the existence and uniqueness of solutions of Stieltjes differential equations,
\emph{Math. Nachr.} \textbf{294} (2021), 794--814.

\bibitem{CichonSatco2014}
M. Cicho\'n and B. R. Satco,
Measure differential inclusions---between continuous and discrete,
\emph{Adv. Difference Equ.} \textbf{2014} (2014), Article 56,
doi:10.1186/1687-1847-2014-56.

\bibitem{MonteiroSatco2019}
G. A. Monteiro and B. Satco,
Extremal solutions for measure differential inclusions via Stieltjes derivatives,
\emph{Adv. Difference Equ.} \textbf{2019} (2019), Article 239,
doi:10.1186/s13662-019-2172-7.

\bibitem{MarraffaSatco2022}
V. Marraffa and B. Satco,
Convergence theorems for varying measures under convexity conditions and applications,
\emph{Mediterr. J. Math.} \textbf{19} (2022), Article 274,
doi:10.1007/s00009-022-02196-y.

\bibitem{SimasSousa2025}
A.~B. Simas and K.~J.~R. Sousa,
One-sided measure theoretic elliptic operators and applications to SDEs driven by Gaussian white noise with atomic intensity,
\emph{Potential Anal.} \textbf{63} (2025), 1347--1380, doi:10.1007/s11118-025-10208-1.

\end{thebibliography}
\end{document}